\documentclass[reqno,12pt]{amsart}

\usepackage{amsmath}
\usepackage{amssymb}
\usepackage{graphicx}
\usepackage{xcolor}

\usepackage[shortlabels]{enumitem}

\DeclareFontShape{OT1}{cmr}{m}{scit}{<-> cmcsc10}{}

\usepackage[colorlinks=true]{hyperref}
\hypersetup{urlcolor=blue, citecolor=red}

\usepackage[capitalize,nameinlink,noabbrev]{cleveref}
\crefformat{equation}{#2(#1)#3}
\crefformat{enumi}{#2(#1)#3}

\newtheorem{theorem}{Theorem}[section]
\newtheorem{definition}{Definition}[section]
\newtheorem{proposition}{Proposition}[section]

\newtheorem{lemma}{Lemma}[section]

\newtheorem{corollary}{Corollary}[section]

\numberwithin{equation}{section}

\newcommand{\NN}{{\mathbb{N}}}

\newcommand{\RR}{{\mathbb{R}}}

\newcommand{\bfu}{\mathbf{u}}
\newcommand{\bfv}{\mathbf{v}}
\newcommand{\bfw}{\mathbf{w}}

\newcommand{\bff}{\mathbf{f}}

\newcommand{\bfx}{\mathbf{x}}

\newcommand{\bfD}{\mathbf{D}}

\newcommand{\bfF}{\mathbf{F}}

\newcommand{\bfzero}{\mathbf{0}}

\newcommand{\bfnabla}{\boldsymbol{\nabla}}

\newcommand{\calB}{{\mathcal B}}
\newcommand{\calC}{{\mathcal C}}

\newcommand{\calE}{{\mathcal E}}
\newcommand{\calF}{{\mathcal F}}

\newcommand{\calP}{{\mathcal P}}

\newcommand{\calT}{{\mathcal T}}

\newcommand{\radius}{{r}}
\newcommand{\Ex}{\calE}

\newcommand{\rmd}{\mathrm{d}}
\newcommand{\rmw}{\mathrm{w}}
\newcommand{\rmv}{\mathrm{v}}

\newcommand{\rmb}{\mathrm{b}}

\newcommand{\rmc}{\mathrm{c}}

\newcommand{\cyl}{{\mathrm{cyl}}}

\newcommand{\ens}{{\mathrm{z}}}
\newcommand{\fpsss}{{\mathrm{fpsss}}}
\newcommand{\fp}{{\mathrm{fp}}}

\newcommand{\average}[1]{\langle{#1}\rangle}
\newcommand{\dual}[1]{\langle{#1}\rangle}

\newcommand{\Lim}{\operatorname*{\textsf{Lim}}_{T\rightarrow \infty}}

\newcommand{\wconv}{\rightharpoonup}

\title[Optimal minimax bounds for ensemble averages of 3D NSE]{Optimal minimax formula for bounds on ensemble averages of statistically stationary three-dimensional Navier-Stokes flows}

\author[A. Bronzi]{Anne Bronzi}
\author[C. F. Mondaini]{Cec\'{\i}lia F. Mondaini}
\author[R. M. S. Rosa]{Ricardo M. S. Rosa}

\address[A. Bronzi]{Instituto de Matem\'atica, Estat\'istica e Computa\c c\~ao Cient\'ifica, Universidade Estadual de Campinas (UNICAMP), Brazil}
\address[C. F. Mondaini]{Department of Mathematics, Drexel University, USA}
\address[R. M. S. Rosa]{Instituto de Matem\'atica, Universidade Federal do Rio de Ja\-nei\-ro, Brazil}

\email[A. Bronzi]{acbronzi@unicamp.br}
\email[C. F. Mondaini]{cf823@drexel.edu}
\email[R. Rosa]{rrosa@im.ufrj.br}

\date{\today}

\subjclass[2010]{35Q30, 76D05, 76D06, 52A40, 49K35}
\keywords{incompressible Navier-Stokes equations; ensemble averages; stationary statistical solutions; minimax problems.}

\begin{document}

\begin{abstract}
We establish an optimal upper bound formula for ensemble averages of flow quantities associated with the three-dimensional incompressible Navier-Stokes equations. The formula takes the form of a minimax problem, extending the framework developed by Tobasco, Goluskin, and Doering (2018) for finite-dimensional systems and by Rosa and Temam (2022) for the two-dimensional Navier-Stokes equations. The lack of global well-posedness for the 3D case presents a significant challenge, which we overcome by working within the space of Foias-Prodi stationary statistical solutions. The minimax formula is derived by exploiting suitable compactness and continuity properties of specific subspaces of probability measures under the weak topology of the phase space. A distinguishing feature of our result is the characterization of the maximizing measures: unlike the previous cases, the optimal bounds in 3D are achieved on extreme points that are specific convex combinations of at most two Dirac delta measures, instead of exactly one, a structure that naturally appears from the constraints given by the mean energy dissipation inequalities in the characterization of the Foias-Prodi stationary statistical solutions.
\end{abstract}

\maketitle

\tableofcontents

\section{Introduction}

In the conventional statistical theory of turbulence, one is interested in ensemble averages of relevant quantities of the flow, such as kinetic energy, enstrophy, energy dissipation rate, and so on. Estimating such mean quantities is of fundamental importance in both theory and applications of fluid flows. Here, we prove an optimal upper bound formula, in the form of a minimax problem, for bounds on ensemble quantities associated with the three-dimensional Navier-Stokes equations (NSE). This further extends, to the three-dimensional case, the optimal minimax formula first obtained by Tobasco, Goluskin, and Doering (2018) \cite{TobGolDoe2018} for (finite-dimensional) ordinary differential equations, and later extended by Rosa and Temam (2022) \cite{RosaTemam2022} to the two-dimensional NSE. The extension to the three-dimensional case was a major outstanding challenge, due to the lack of regularity.

The ensemble average $\average{\phi}$ of a quantity $\phi=\phi(\bfu)$ of the velocity field $\bfu=\bfu(t, \bfx)$ is informally defined as an average of a number of sample flows $\bfu_j=\bfu_j(t, \bfx)$, $j=1, \ldots, n$,
\[
    \average{\phi} = \frac{1}{n} \sum_{j=1}^n \phi(\bfu_j(t, \cdot)).
\]
This average may depend on $t$, as in the study of decaying turbulence, or be independent of $t$, as in the case of statistically stationary flows, which is the case of interest to us.

In a more formal way, this quantity is regarded as an average, or expectation, with respect to a probability distribution on a sample space, or, in the rigorous framework developed initially by Foias in the early 1970's \cite{Foias72,Foias73}, as a Borel probability measure $\mu$ on the phase space $H$ of the system (see \cref{secsetting} for the mathematical framework),
\[
    \average{\phi}_\mu = \int_H \phi(\bfu) \;\rmd\mu(\bfu).
\]
In the statistically stationary case, the probability distribution would be a time-invariant distribution. However, due to the lack of a known well-posedness result for the 3D NSE (see e.g. \cite{BuckmasterVicol2019,HouWangYang2025} and references therein, for recent results and discussions on this matter), this statistical time invariance is made rigorous with the notion of a \emph{stationary statistical solution,} in the Foias-Prodi framework \cite{Foias73}.


In many situations, one looks for upper (or lower) bounds for such quantities, in the form of $\average{\phi}_\mu \leq C,$ for all relevant $\mu,$ which can also be written as
\[
    \sup_\mu \average{\phi}_\mu \leq C.
\]

A ``sharp'' upper bound is obtained when the bound $C$ is close to the supremum in some suitable sense,
especially when regarded with respect to some relevant parameters, such as viscosity, Reynolds number, Grashof number, and so on. The holy grail, though, from a theoretical perspective, is an \emph{optimal estimate}, obtained when we have precisely an equality
\[
    \sup_\mu \average{\phi}_\mu = C.
\]

With that in mind, our aim in this work is to prove the optimal minimax formula
\begin{equation}
    \label{optminimaxintro}
    \max_\mu \average{\phi}_\mu = \inf_{\Psi} \max_{\bfu_1, \bfu_2, \theta_1, \theta_2} \;\sum_{i=1}^2\theta_i\{\phi(\bfu_i) + \dual{\bfF(\bfu_i), \bfD\Psi(\bfu_i)}_{V', V} \}.
\end{equation}
In this identity, $\Psi : H \rightarrow \mathbb{R}$ is a test functional on the phase space $H$ of square-integrable divergence-free vector fields; $\bfu_1,\bfu_2$ are divergence-free vector fields in a region of the phase space; $\theta_1 \in [0, 1]$ and $\theta_2 = 1 - \theta_1$ are weights for a convex combination; $\bfF=\bfF(\bfu)$ is the right hand side of the evolution equation (the 3D NSE, in our case); and $\dual{\cdot, \cdot}_{V', V}$ is the duality product for the space $V$ of divergence-free vector fields in $H_0^1(\Omega)^3$, with $V \subset H = H' \subset V'$ (see \cref{secsetting} for the mathematical framework). 

The right hand side of the optimal formula \eqref{optminimaxintro} involves a maximum for points over (a region of) the phase space, so it does not depend on solving the Navier-Stokes equations or the corresponding transport equation. The asymptotic statistical regime is captured by the test functional, which attempts to act, in the minimax program, as a form of Lyapunov function \cite{TobGolDoe2018,FGHC2016}.

The corresponding formula obtained in \cite{RosaTemam2022} for the two-dimensional Navier-Stokes equations (and similarly in \cite{TobGolDoe2018} for systems of ordinary differential equations) involves only one point in phase space and reads
\begin{equation}
    \label{optminimaxintroprevious}
    \max_\mu \average{\phi}_\mu = \inf_{\Psi} \max_{\bfu} \{\phi(\bfu) + \dual{\bfF(\bfu), \bfD\Psi(\bfu)}_{V', V} \}.
\end{equation}

The minimax problem in the right hand side of \eqref{optminimaxintroprevious} can be written as a linear convex optimization problem. This is an important property from a practical point of view. Upon approximating the test functionals by polynomials, the linear convex optimization problem becomes a semidefinite programming problem, which can be solved numerically with the help of suitable optimization softwares, as done for instance in \cite{CheGouHuaPa2014,FantuzziWynn2015,FGHC2016,TobGolDoe2018,Goluskin2018,GoluskinFantuzzi2019}, providing rigorous and sharp bounds for practical problems. Similarly, the minimax problem in the right hand side of \eqref{optminimaxintro} can also be written as a linear convex optimization problem, albeit with a slightly more involved structure, due to the presence of the convex combination of two points in the phase space (see \cref{secminimaxaslinearoptimization} for details).

In order to be more precise about \eqref{optminimaxintro}, we need to specify the allowed measures $\mu$, the allowed test functionals $\Psi$, and the allowed velocity fields $\bfu_1, \bfu_2$, in the maximum and infimum above. The choices are important. The measures $\mu$, in particular, need to represent the statistically stationary flow regimes; the test functionals are used to define these statistically stationary measures, in a weak sense; and the set for the velocity fields is related to the regularity of these statistically stationary measures.

We start with a bounded and closed subset $B$ of the phase space $H$ and assume it is weakly compact. We denote by $B_\rmw$ the set $B$ endowed with the weak topology of $H$. Next we consider the space $\calP(B_\rmw)$ of Borel probability measures on $H_\rmw$ which are carried by $B_\rmw$ and endowed with the topology of weak-star convergence of measures for weakly continuous functions on $B.$ 
This is a compact probability space. The measures associated with statistically stationary flows are taken to be \emph{Foias-Prodi stationary statistical solutions} \cite{FMRT2001,FRT2013,FRT2019}. They are measures $\mu\in \calP(H)$ with some regularity properties and associated with the Navier-Stokes equations via a Liouville-type transport equation, i.e. they have \emph{finite mean enstrophy},
\begin{equation}
    \label{finitemeanenstrophyintro}
    \int_H \|\bfnabla \bfu\|_{L^2}^2 \;\rmd\mu(\bfu) < \infty,
\end{equation}
and they satisfy the \emph{mean energy dissipation inequalities}
\begin{equation}
    \label{meandissipationineqsintro}
    \int_H \psi'(\|\bfu\|_{L^2}^2) \left\{\nu\|\bfnabla\bfu\|_{L^2}^2 - \dual{\bff, \bfu}_{V', V} \right\}\;\rmd\mu(\bfu) \leq 0,
\end{equation}
valid for all nonnegative, non-decreasing continuously-differentiable functions $\psi$ on $[0, \infty)$ with bounded derivative, and the \emph{mean Liouville-type transport equation}
\begin{equation}
    \label{eqliouvilleintro}
    \int_H \dual{\bfF(\bfu), \bfD\Psi(\bfu)}_{V', V} \;\rmd\mu(\bfu)=0,
\end{equation}
for every test functional $\Psi$ in a suitable space $\calT^\cyl$ of \emph{cylindrical test functions} (see \cref{defscyl}, in \cref{secsetting}).

The Foias-Prodi stationary statistical solutions turn out to be supported on a bounded set of the phase space (see \cref{lemmusupportedinH}). Local attractors also support some of these solutions. With such sets $B$ in mind, we consider the spaces
\begin{equation}
    \calP_\fp(B_\rmw) = \left\{ \mu \in \mathcal{P}(B_\rmw); \; \mu \textrm{ satisfying \eqref{finitemeanenstrophyintro} and \eqref{meandissipationineqsintro}} \right\},
\end{equation}
and
\begin{equation}
    \calP_\fpsss(B_\rmw) = \left\{ \mu \in \mathcal{P}(B_\rmw);\; \mu \textrm{ satisfying \eqref{finitemeanenstrophyintro}, \eqref{meandissipationineqsintro}, and \eqref{eqliouvilleintro}} \right\}.
\end{equation}

For the two-dimensional NSE \cite{RosaTemam2022}, or similarly in the finite-dimensional ODE case \cite{TobGolDoe2018}, the optimal formula contains only single points, instead of convex combinations of two points, and the proof goes essentially as follows:
\begin{align*}
    & \max_{\mu\in\calP_\fpsss(B)} \int_H \phi \;\rmd\mu \\
    & \qquad {= \max_{\mu\in\calP_\fpsss(K)}  \int_H \phi \;\rmd\mu} & & {\text{\scriptsize(supported on compact set $K$)}} \\
    & \qquad {= \sup_{\mu\in\calP_\fpsss(K)} \inf_{\Psi\in\calT^\cyl} \int_H \{\phi + \dual{\bfF, \bfD\Psi}_{V', V} \}\;\rmd\mu} & & {\text{\scriptsize(add auxiliary functional)}}\\
    & \qquad {= \sup_{\mu\in\mathcal{P}(K)} \inf_{\Psi\in\calT^\cyl} \int_H \{\phi + \dual{\bfF, \bfD\Psi}_{V', V} \}\;\rmd\mu} & & {\text{\scriptsize(extend to arbitrary measures)}}\\
    & \qquad {= \inf_{\Psi\in\calT^\cyl} \sup_{\mu\in\mathcal{P}(K)} \int_H \{\phi + \dual{\bfF, \bfD\Psi}_{V', V}\} \;\rmd\mu} & & { \text{\scriptsize(minimax theorem)}}\\
    & \qquad {= \inf_{\Psi\in\calT^\cyl} \max_{\bfu\in K} \;\{\phi + \dual{\bfF, \bfD\Psi}_{V', V} \}}, & & {\text{\scriptsize(maximum at Dirac delta)}}
\end{align*}
where $K$ is a compact set in $H$ which attracts the orbits starting from the set $B$.

This result has been proved in \cite{RosaTemam2022} for $\phi$ strongly continuous in $H$, when the forcing term $\bff$ is in $V',$ but the same result holds for $\phi$ strongly continuous in $V$, with the same assumption on $\bff,$ or even in $D(A),$ if the forcing term is assumed in $H$.

A straightforward adaptation to the 3D NSE does not seem possible, at first sight, for a number of reasons: no absorbing or attracting compact set $K$ in $H$, only on the weak topology; not enough regularity for the integrand with respect to the weak topology of $H$; and not enough regularity for a corresponding characterization of stationary statistical solutions as the measures in $\mathcal{P}(K)$ that satisfy the Liouville equation.

However, with a more judicious use of the available regularity and a more meticulous choice of the involved spaces, we are able to prove the optimal formula \eqref{optminimaxintro}, with the following steps:
\begin{align*}
    & \max_{\mu\in\calP_\fpsss(B)} \int_H \phi \;\rmd\mu & & \\
    & \quad\;\; {= \max_{\mu\in\calP_\fpsss(B_\rmw)} \int_H \phi \;\rmd\mu} & & {\text{\scriptsize(change to weak topology)}} \\
    & \quad\;\; {= \max_{\mu\in\calP_\fpsss(B_\rmw)} \inf_{\Psi\in\calT^\cyl} \int_H \{\phi + \dual{\bfF, \bfD\Psi}_{V', V} \}\;\rmd\mu} & & {\text{\scriptsize(add auxiliary functional)}}\\
    & \quad\;\; {= \max_{\mu\in\calP_\fp(B_\rmw)} \inf_{\Psi\in\calT^\cyl} \int_H \{\phi + \dual{\bfF, \bfD\Psi}_{V', V} \}\;\rmd\mu} & & {\text{\scriptsize(extend to regular measures)}}\\
    & \quad\;\; {= \inf_{\Psi\in\calT^\cyl} \max_{\mu\in\calP_\fp(B_\rmw)} \int_H \{\phi + \dual{\bfF, \bfD\Psi}_{V', V}\} \;\rmd\mu} & & { \text{\scriptsize(Sion's minimax theorem)}}\\
    & \quad\;\; {= \inf_{\Psi\in\calT^\cyl} \max_{\mu \in \Ex(\calP_\fp(B_\rmw))} \int_H \left\{ \phi + \dual{\bfF, \bfD\Psi}_{V', V}\right\}\;\rmd\mu} & & { \text{\scriptsize(Bauer's maximum principle)}}\\
    & \quad\;\; {= \inf_{\Psi\in\calT^\cyl} \max_{(\bfu_1, \bfu_2, \theta_1, \theta_2)\in \calB_f} \;\sum_{i=1}^2\theta_i\{\phi_i + \dual{\bfF_i, \bfD\Psi_i}_{V', V} \}}, & & {\text{\scriptsize(extremes in $\calP_\fp(B_\rmw)$)}}\\
\end{align*}
where $\Ex(\calP_\fp(B_\rmw))$ are the extreme points in $\calP_\fp(B_\rmw),$ which turn out to be supported on at most two Dirac delta measures, $\mu=\theta_1\delta_{\bfu_1} + \theta_2\delta_{\bfu_2},$ with $\bfu_1, \bfu_2, \theta_1, \theta_2$ in the set $\calB_f$ given in \cref{propextremePfp}, and where for notational simplicity we wrote $\phi_i = \phi(\bfu_i),$ $\bfF_i = \bfF(\bfu_i)$ and $\bfD\Psi_i = \bfD\Psi(\bfu_i).$

The first main challenge was to realize that we could properly frame the problem with the spaces and topologies above. The technicalities are quite more involved, due to the use of the weak topology and the necessary conditions defining the spaces $\calP_\fp(B_\rmw)$ and $\calP_\fpsss(B_\rmw),$ but once the proper framework is found and the topological properties of these spaces and of the functions involved are established, the proof of the final result falls into place elegantly, in a way similar to the well-posed case.

The most remarkable difference is the appearance of the convex combination of two points in the phase space, in the final minimax problem. This is actually a natural formulation given the fact that the Foias-Prodi stationary statistical solutions involve not only the stationary Liouville-type transport equation \eqref{eqliouvilleintro} but also the mean energy dissipation inequalities \eqref{meandissipationineqsintro}. These dissipation inequalities, however, form a continuum of linear constraints. Fortunately, they result in the extreme points being measures carried on a single energy shell and where the constraints reduce to a single linear constraint. In a way akin to finding the extreme points in a convex set under linear constraints, this single linear constraint on the extreme measures in $\calP_\fp(B_\rmw)$ implies that they are either Dirac delta measures or convex combinations of two Dirac delta measures, leading to the above characterization.

The mean energy dissipation inequalities \eqref{meandissipationineqsintro} are related to the energy inequality that defines Leray-Hopf weak solutions of the three-dimensional Navier-Stokes equations. If one considers instead a more general notion of ``weak'' stationary statistical solutions, satisfying only \eqref{finitemeanenstrophyintro} and \eqref{eqliouvilleintro}, then there is no linear constraint but there is no sufficient regularity for the other steps.

Concerning the steps delineated above, the first one is natural since $B$ is compact in the weak topology and this topology is good for the weak solutions of the 3D NSE. The subsequent identity follows immediately from the Liouville-type equation \eqref{eqliouvilleintro}, as in the well-posed case, so that adding the auxiliary term changes nothing. The extension to measures in $\calP_\fp(B_\rmw)$ follows from the fact that, for measures in the complementary space $\calP_\fp(B_\rmw) \setminus\calP_\fpsss(B_\rmw)$, the infimum in $\Psi$ of the integral expression is $-\infty,$ so they do not contribute to the maximum in $\mu$. This is, again, similar to the 2D case, except for the choice of extending the maximum to $\calP_\fp(B_\rmw)$ instead of the larger space $\calP(B_\rmw)$. One important fact here is that the space $\calP_\fp(B_\rmw)$ is a closed convex subspace of $\mathcal{P}(B_\rmw)$, hence it is also compact. The most distinguishing fact, however, is that the measures in $\calP_\fp(B_\rmw)$ turn out to be uniformly tight with respect to the bounded sets in $V,$ which is compactly included in $H$. This latter fact guarantees that $G(\mu, \Psi) = \int_H \{\phi + \dual{F, \bfD\Psi} \}\;\rmd\mu$ is continuous with respect to $\mu\in \calP_\fp(B_\rmw)$. This map is also continuous with respect to $\Psi\in \calT^\cyl,$ for a suitable norm in $\calT^\cyl$. These continuity properties of $G(\mu, \Psi)$ and the compactness of $\calP_\fp(B_\rmw)$ allow us to apply Sion's minimax theorem to switch the order of $\max$ and $\inf$. The continuity of $G(\mu, \Psi)$ with respect to $\mu \in \calP_\fp(B_\rmw)$ and the compactness of this space also allow us to apply Bauer's maximum principle to deduce that the maximum in $\mu$ is achieved on extreme points. Finally, the structure of $\calP_\fp(B_\rmw)$, with the constraints \eqref{meandissipationineqsintro}, allows us to deduce first that the extreme measures are supported on an energy shell $\{\|\bfu\|_{L^2}^2 = e\}$ and then, that these constraints reduce to a single linear constraint on that energy shell, yielding that the extreme points are convex combinations of at most two Dirac delta measures carried by the set $B_\rmw$. This convex combination belongs to $\calP_\fp(B_\rmw)$, which is equivalent to the restriction to $\calB_f.$

The result \eqref{optminimaxintro} is established for any functional $\phi$ which is weakly continuous on $V$ and with sub-quadratic growth in the norm of $V$ (see \cref{lemgvwGcontinuous}). This includes the kinetic energy 
\[ \phi(\bfu) = \frac{\rho}{2} \int_\Omega |\bfu(\bfx)|^2 \;\rmd\bfx,
\]
and the energy fluxes
\[ \phi(\bfu) = \rho \int_\Omega (\bfu(\bfx) \cdot \bfnabla)\bfu(\bfx) \cdot (P_k \bfu)(\bfx) \;\rmd\bfx,
\]
and
\[
    \phi(\bfu) = \rho \int_\Omega (\bfu(\bfx) \cdot \bfnabla)\bfu(\bfx) \cdot (Q_k \bfu)(\bfx) \;\rmd\bfx,
\]
where $\rho$ is the density of the fluid, assumed homogeneous, and $P_k$ is the Galerkin projector, i.e. the orthogonal projector in $H$ onto the span of the first $k$ eigenmodes of the Stokes operator, while $Q_k = I - P_k.$ It does not include the enstrophy, nor the energy dissipation rate, which involve the integrand $|\bfnabla\bfu(\bfx)|^2.$ For these, we need to restrict the stationary statistical solutions to those with a stronger bound, valid, for instance, for any stationary statistical solution obtained as a generalized limit of time-averages of Leray-Hopf weak solutions (see \cref{secvfsss}).

Another approach is to approximate the functional $\phi(\bfu) = \|\bfnabla\bfu\|_{L^2}^2$ from below by $\phi_k(\bfu) = \|\bfnabla P_k\bfu\|_{L^2}^2$ and apply the optimal minimax upper bound formula to $\phi_k$, which yields an optimal upper bound formula for $\phi$ as the limit of minimax problems, as discussed in \cref{secenstrophyupperbound}.

On a different direction, by switching the sign of $\phi,$ we can also obtain an optimal lower bound formula. Indeed, by applying the optimal minimax upper bound formula to $-\phi$ and using that $\calT^\cyl$ is a vector space, we obtain a corresponding optimal lower bound formula
\[
    \min_{\mu\in\calP_\fpsss(B)} \average{\phi}_\mu = \sup_{\Psi\in\calT^\cyl} \min_{(\bfu_1, \bfu_2, \theta_1, \theta_2)\in\calB_f} \;\sum_{i=1}^2\theta_i\{\phi(\bfu_i) + \dual{\bfF(\bfu_i), \bfD\Psi(\bfu_i)}_{V', V} \}.
\]

Moreover, both Sion's minimax theorem and Bauer's maximum principle hold more generally for upper-semi-continuous functions $\mu \mapsto G(\mu, \Psi)$, so that we can actually apply the minimax formula to $\phi(\bfu) = - \|\bfnabla\bfu\|_{L^2}^2,$ which corresponds to an optimal lower bound formula for the enstrophy and energy dissipation rate.

The core of the paper is organized as follows. In \cref{secsetting}, we recall the mathematical setting for the three-dimensional Navier-Stokes equations and the proper functional-analytic framework for the Foias-Prodi stationary statistical solutions. In \cref{secbackground}, we recall known results related to the topology of probability spaces, Sion's minimax theorem, extreme points, and Bauer's maximum principle. In \cref{secprelim}, we establish some preliminary results needed for the steps of the proof delineated above. In \cref{secmainresult}, we prove the optimal minimax formula. Finally, in \cref{seconclusions}, we present a number of concluding remarks and perspectives for future works.

\section{Mathematical setting}
\label{secsetting}

In this section, we recall the Leray-Hopf framework for three-dimensional NSE on bounded domains and the proper functional-analytic formulation for the Foias-Prodi stationary statistical solutions.

\subsection{The three-dimensional Navier-Stokes equations}
\label{sec3dnse}

Here we recall some fundamental and classical results about the individual solutions of the three-dimensional Navier-Stokes equations, which can be found, for instance, in \cite{ConstFoias1989,FMRT2001,Lady1963,Temam1984,Temam1988,Temam1995}.

We consider the flow of a three-dimensional homogeneous incompressible Newtonian viscous fluid enclosed in a bounded domain $\Omega\subset\RR^3$ with boundary $\partial\Omega$ and governed by the Navier-Stokes equations, with no-slip boundary conditions. We denote by $u(t, \bfx)\in\RR^3$ and $p(t, \bfx)\in\RR$, respectively, the velocity and the kinematic pressure of the fluid at a point $\bfx=(x_1, x_2, x_3)\in\Omega$ and time $t\ge0$, with velocity coordinates $\bfu = (u_1, u_2, u_3).$ The velocity and pressure are thus determined by the following initial-boundary value problem:
\begin{equation}
  \label{nse}
  \begin{cases}
	{\displaystyle\frac{\partial \bfu}{\partial t}}
	 - \nu \Delta \bfu + (\bfu\cdot\bfnabla)\bfu + \bfnabla p = \bff,
		\qquad\text{in } \Omega,
	\\
	\bfnabla \cdot \bfu = 0, \qquad\text{in } \Omega,
	\\
	\bfu=\bfzero, \qquad\text{on } \partial\Omega,
  \end{cases}
\end{equation}
where $\nu>0$ is the kinematic viscosity of the fluid, and $\bff=\bff(\bfx)\in\RR^3$ is the external mass density of body forces, which is assumed to be time independent.

We let $L^2(\Omega)^3$ and $H_0^1(\Omega)^3$ denote the usual Lebesgue and Sobolev spaces of three-dimensional vector fields on $\Omega.$ We denote the norm in $L^2(\Omega)^3$ by $\|\cdot\|_{L^2}.$ Since $\Omega$ is assumed to be bounded, the norm in $H_0^1(\Omega)^3$ is taken to be $\|\bfnabla \cdot \|_{L^2},$ with the understanding that $\bfnabla \bfu = \bfnabla \otimes \bfu = (\partial_{x_j}u_i)_{i,j=1}^3.$

Consider the space $H$ defined as the $L^2$-closure of the space of smooth, compactly supported divergence-free vector fields, i.e. the closure of $\{\bfu\in \calC_\rmc^\infty(\Omega)^3; \;\bfnabla \cdot \bfu = 0\}$ in $L^2(\Omega)^3.$ The corresponding closure in $H_0^1(\Omega)^3$ is denoted by $V.$ The norms and inner products on $H$ and $V$ are those inherited from $L^2$ and $H_0^1$, respectively.

The spaces $H$ and $V$ are two separable Hilbert spaces, with $V$ densely and compactly immersed into $H$. We identify $H$ with its dual, so that $V \subset H = H' \subset V'$. The duality product of the pair $V', V$ is denoted by $\dual{\cdot, \cdot}_{V', V}.$ The induced norm in $V'$ is denoted by $\|\cdot\|_{V'}$.

The space $H$ with its weak topology $\sigma(H, H')$ is denoted by $H_\rmw$. Any subset $B\subset H$ endowed with the weak topology inherited from $H$ is denoted $B_\rmw.$ With that, we denote the space of real-valued functions on $H$ which are continuous under the weak topology of $H$ by $\calC(H_\rmw)$, and the subspace of those that are bounded is denoted $\calC_\rmb(H_\rmw)$. In order to avoid any confusion, the weak topology in $V$ is simply referred to by $\sigma(V, V')$.

We have the Stokes operator $A:D(A)\subset H \rightarrow H$ which is a positive self-adjoint operator densely defined in $H$ and with a compact inverse. Its eigenvalues are positive and ordered in nondecreasing order and repeated accordingly to their multiplicity. The first eigenvalue is denoted by $\lambda_1.$ The Galerkin projectors onto the first $k$ eigenmodes are denoted by $P_k:H\rightarrow H.$ We also have $V = D(A^{1/2}).$ 

We finally define the function
\begin{equation}
    \label{defF}
    \bfF = \bff - \nu A\bfu - B(\bfu, \bfu),
\end{equation}
where $B(\cdot, \cdot): V\times V \rightarrow V'$ is the bilinear operator defined by duality via the trilinear form, \[\dual{B(\bfu, \bfv),\bfw}_{V',V}=b(\bfu, \bfv, \bfw) = \int_\Omega (\bfu \cdot \bfnabla)\bfv \cdot \bfw \;\rmd\bfx, \quad \forall \,\bfu,\bfv,\bfw\in V.\]
We assume that $\bff\in V'$, so that $\bfF:V\rightarrow V'$ is continuous from $V$ into $V'.$ In this framework, the 3D NSE can be written as the functional equation $\bfu_t = \bfF(\bfu),$ in $L^1(0, T; V'),$ for $T>0.$ Given an initial condition $\bfu_0\in H,$ there exists at least one solution $\bfu \in \calC([0, T], H_\rmw) \cap L^2(0, T; V)$ with $\bfu_t \in L^{4/3}(0, T; V')$. The following estimate holds
\begin{equation}
    \label{eqenergyineq}
    \|\bfu(t)\|_{L^2}^2 \leq \|\bfu(0)\|_{L^2}^2 e^{-\nu \lambda_1 t} + \frac{1}{\nu^2\lambda_1} \|\bff\|_{V'}^2 \left( 1 - e^{-\nu \lambda_1 t}\right), \quad \forall t \geq 0.
\end{equation}

Define the radius
\begin{equation}
    \label{defrho0}
    \radius_0 = \frac{\|\bff\|_{V'}}{\nu\lambda_1^{1/2}}.
\end{equation}
Inequality \eqref{eqenergyineq} implies that, for any $\radius > \radius_0,$ the closed ball $B_H(\radius) = \{\bfu \in H; \; \|\bfu\|_{L^2} \leq \radius \}$ in $H$ with radius $\radius$ is absorbing, i.e. any weak solution $\bfu=\bfu(t)$ enters, and remains, in $B_H(\radius)$ in finite time.

We also need the following explicit bound on $B(\bfu, \bfu)$ in $V'$ (see \cite[Chapter II, eq. (A.31)]{FMRT2001})
\begin{equation}
    \label{eqboundBuuinVprime}
    \|B(\bfu, \bfu)\|_{V'} \leq c_L \|\bfu\|_{L^2}^{1/2} \|\bfnabla\bfu\|_{L^2}^{3/2},
\end{equation}
where $c_L$ is a universal constant (from the $L^4$ Ladyzhenskaya inequality in three dimensions).

We say that a subset $B\subset H$ is \textbf{positively invariant} for the three-dimensional Navier-Stokes equations if, for any initial condition $\bfu_0\in B,$ we have that any weak solution $\bfu=\bfu(t)$ with $\bfu(0) = \bfu_0$ is such that $\bfu(t)\in B,$ for any $t\geq 0.$ Of course, $H$ itself is positively invariant, as well as the ball $B_H(\radius)$, for any $\radius \geq \radius_0.$

\subsection{Test functionals for the stationary statistical solutions}
The stationary statistical solutions are defined in a weak sense, and for that we need a notion of test functional, as follows.

\begin{definition}
    \label{defscyl}
    A \textbf{cylindrical test functional} is any function $\Psi:V'\rightarrow \RR$ of the form
    \[ \Psi(\bfu) = \psi(\dual{\bfu, \bfw_1}_{V',V}, \ldots, \dual{\bfu, \bfw_m}_{V', V}), \qquad \forall \bfu\in V',
    \]
    for some $\bfw_1, \ldots, \bfw_m\in V$, $m\in \NN$, and $\psi\in \calC_c^1(\RR^m)$. We denote the set of such test functionals by $\calT^\cyl$.
\end{definition}


The cylindrical test functionals are Fr\'echet differentiable in $V'$, with differential $\bfD\Psi(\bfu),$ in $V$, given by
\begin{equation}
    \label{eqfrechetPsi}
    \bfD\Psi(\bfu) = \sum_{j=1}^m \partial_j \psi(\dual{\bfu, \bfw_1}_{V',V}, \ldots, \dual{\bfu, \bfw_m}_{V', V})\bfw_j, \quad \forall \bfu\in V'.
\end{equation}

Clearly, $\calT^\cyl$ is a vector space. It becomes a normed vector space under the norm
\begin{equation}
    \label{eqcalTcylmetric}
    \|\Psi\|_{\calT^\cyl} = \sup_{\bfu\in H} |\Psi(\bfu)| + \sup_{\bfu\in V} \|\bfnabla \bfD\Psi(\bfu)\|_{L^2}.
\end{equation}

The cylindrical test functionals are simple enough to easily allow for suitable a~priori estimates, and their set is large enough to be dense in the space of bounded continuous functionals on $H$, under the weaker norm $\sup_{\bfu\in H} |\Psi(\bfu)|.$ They are also dense, with respect to the norm \eqref{eqcalTcylmetric}, in the set of general test functionals discussed in \cite[Definition 6.2]{RosaTemam2022} and introduced earlier by Foias \cite{Foias72,Foias73}. We use the norm \eqref{eqcalTcylmetric} to have the continuity of the functional in the Liouville-type transport equation \eqref{eqliouvilleintro}.

\subsection{Stationary statistical solutions}
\label{secsss}

With the definition of cylindrical test functionals, we have the following notion of Foias-Prodi stationary statistical solution. We refer the reader to \cite{FMRT2001}.

\begin{definition}
    \label{defFPSSS}
    We define a \textbf{(Foias-Prodi) stationary statistical solution} as any Borel probability measure $\mu$ on $H$ with the properties that
    \begin{subequations}
      \begin{align}
        \label{finitemeanenstrophy}
        (i) & \int_H \|\bfnabla \bfu\|_{L^2}^2\;\rmd\mu(\bfu) < \infty, \\
        \label{meandissipationineqs}
        (ii) & \int_H \psi'(\|\bfu\|_{L^2}^2)\left( \nu \|\bfnabla \bfu\|_{L^2}^2 - \dual{\bff, \bfu}_{V', V} \right) \;\rmd\mu(\bfu) \leq 0, \\
        \label{eqliouville}
        (iii) & \int_H \dual{\bfF(\bfu), \bfD\Psi(\bfu)}_{V',V} \;\rmd\mu(\bfu) = 0, \quad \forall \Psi\in \calT^\cyl,
      \end{align}
    \end{subequations}
    for any real-valued continuously-differentiable function $\psi$ on $[0, \infty)$ which is nonnegative, non-decreasing, and with bounded derivative.
\end{definition}

A particular family of stationary statistical solution is obtained via the generalized limit of time averages of Leray-Hopf weak solutions. Indeed, for any global Leray-Hopf weak solution $\bfu = \bfu(t),$ $t \geq 0,$ for which $\bfu \in \calC([0, T], H_\rmw) \cap L^2(0, T; V)$, for any $T > 0,$ the time average
\[
    \frac{1}{T}\int_0^T \varphi(\bfu(t))\;\rmd t
\]
is a bounded function of $T > 0,$ for any bounded Borel-measurable functional $\varphi$ on $H$. Then, for any generalized limit $\Lim$ on the space of bounded functions on $[0, \infty)$, we obtain a positive linear functional which is represented by a Borel probability measure $\mu=\mu_\bfu$ (it also depends on the choice of $\Lim$, but we omit this dependence for notational simplicity), i.e.
\[
    \int_H \varphi(\bfv)\;\rmd\mu_{\bfu}(\bfv) = \Lim \frac{1}{T}\int_0^T \varphi(\bfu(t))\;\rmd t, \quad \forall \varphi \in \calC_\rmb(H_\rmw).
\]
This measure $\mu_{\bfu}$ has all the properties described in \cref{defFPSSS}. A stationary statistical solution obtained as the generalized limit of time averages of a weak solution is called a \textbf{time-average stationary statistical solution}. More details about such solutions can be found in \cite[Section IV.3.1]{FMRT2001}.

If the initial condition $\bfu(0)$ belongs to a positively invariant and weakly compact set $B\subset H,$ then $\bfu(t)\in B$ for all $t\geq 0$ and the corresponding measure $\mu_\bfu$ is carried by $B.$ Thus, we find that, in any positively invariant and weakly compact set $B\subset H,$ there exists at least one time-average stationary statistical solution which is carried by $B$. This applies to local basins of attraction but also to unstable fixed points and $\alpha$-limit sets.



\section{Background results}
\label{secbackground}

In this section, we recall some known results in measure theory and in functional analysis that relate to our mathematical framework.

\subsection{Measure spaces}

The space of Borel probability measures on a topological space $X$ is denoted by $\calP(X)$. We endow it with the standard weak-star topology, which is the smallest topology for which $\mu \mapsto \int_X \varphi(u) \;\rmd\mu(u)$ is continuous, for every $\varphi\in \calC_\rmb(X),$ which denotes the space of bounded, continuous real-valued functions on $X$.  We denote the weak-star convergence of a net $(\mu_\alpha)_\alpha$ to a measure $\mu$ in this topology by $\mu_\alpha \wconv \mu,$ in $\calP(X)$, meaning that $\int_X \varphi(u) \;\rmd\mu_\alpha(u) \rightarrow \int_X \varphi(u) \;\rmd\mu(u),$ for all $\varphi\in\calC_\rmb(X).$ 

A measure $\mu\in\calP(X)$ is called \textbf{tight} when, for every Borel subset $E\subset X$ and every $\varepsilon > 0,$ there exists a compact subset $K\subset E$ such that $\mu(E\setminus K) < \varepsilon.$ We denote by $\calP_\tau(X)$ the space of tight Borel probability measures on $X$, endowed with the topology inherited from $\calP(X).$

A family $\calF$ of finite Borel measures on $X$ is called \textbf{(uniformly) tight} when, for every $\varepsilon > 0,$ there exists a compact subset $K\subset X$ such that $\mu(X\setminus K) < \varepsilon,$ for all $\mu\in\calF.$ When $X$ is a separable metrizable space, then every uniformly tight family of measures in $\calP(X)$ is relatively compact in $\calP(X)$ \cite[Theorem 12.5]{AliBor2006}. When $X$ is metrizable and compact, every Borel probability measure on $X$ is tight, so $\calP(X) = \calP_\tau(X)$, and, since the whole space $X$ is compact, the space $\calP(X)$ is uniformly tight and hence compact (see also \cite[Theorem 12.11]{AliBor2006}). If $X$ is a Polish space (a separable and complete metrizable space), then any Borel probability measure is tight \cite[Theorem 12.7 with Lemma 12.6]{AliBor2006}, so that $\calP(X) = \calP_\tau(X)$. Still in this case that $X$ is a Polish space, Prokhorov's Theorem (see e.g. \cite{Prokhorov1956} and \cite[Theorem 11.5.4]{Dudley2002}) asserts that uniform tightness and relative compactness are actually equivalent, i.e. a family $\calF$ of Borel probability measures on $X$ is uniformly tight \emph{if and only if} it is relatively compact in $\calP(X)$.

When $X$ is completely regular, it follows from the Portmanteau Theorem \cite[Theorem 8.1]{Topsoe1970} that the weak-star convergence in the space $\calP_\tau(X)$ of tight measures is equivalent to the lower-semi-continuous weak-star convergence, i.e. it is equivalent to $\int_X \varphi(u) \;\rmd\mu(u) \leq \liminf_{\alpha}\int_X \varphi(u) \;\rmd\mu_\alpha(u),$ for all bounded and lower-semi-continuous real-valued functions on $X.$ 

For a Borel subset $E\subset X$, we denote by $\calP(E)$ the subspace of $\calP(X)$ of measures $\mu\in \calP(X)$ which are carried by $E,$ i.e. $\mu(X\setminus E) = 0,$ endowed with the topology inherited from $\calP(X)$. Similarly for $\calP_\tau(E).$ When $X$ is completely regular, the topology of $\calP_\tau(E)$ inherited from $\calP_\tau(X)$ coincides with the topology on $\calP_\tau(E)$ when we view $E$ itself as a topological space. This is also a consequence of the Portmanteau Theorem \cite[Theorem 8.1]{Topsoe1970}.\footnote{Indeed, as a subspace, $\mu_\alpha \wconv \mu$ if $\int_X \varphi(u)\;\rmd\mu_\alpha(u) \rightarrow \int_X \varphi(u)\;\rmd\mu(u)$, for every $\varphi\in\calC_\rmb(X),$ while as a topological space by itself, $\mu_\alpha\wconv \mu$ if the convergence holds for every $\varphi\in\calC_\rmb(E).$ Since $\varphi\in\calC_\rmb(X)$ implies $\varphi|_E \in \calC_\rmb(E)$, the latter convergence implies the former. For the converse, we use the Portmanteau Theorem \cite[Theorem 8.1]{Topsoe1970}, which says that, for tight measures on a completely regular space, the former convergence is equivalent to $\limsup_\alpha \mu_\alpha(C) \leq \mu(C),$ for every closed subset $C$ in $X$. Now, if $F$ is a relatively closed set in $E$, by definition it is of the form $F=C\cap E,$ for some $C$ closed in $X.$ Then, since $\mu_\alpha$ is carried by $E$, we have $\mu_\alpha(C) = \mu_\alpha(C\cap E) + \mu_\alpha(C\setminus E) = \mu_\alpha(F)$. Similarly, $\mu(C) = \mu(F).$ Thus, $\lim_\alpha \mu_\alpha(F) = \lim_\alpha \mu_\alpha(C) \leq \mu(C) = \mu(F),$ showing that $\mu_\alpha \wconv \mu$ in the latter sense, completing the proof of equivalence.} Thus, at least in this case, there is no ambiguity in the notation $\calP_\tau(E)$, and we have that the convergence $\mu_\alpha \wconv \mu$ in $\calP_\tau(E)$ means that $\int_X \varphi(u) \;\rmd\mu_\alpha(u) \rightarrow \int_X \varphi(u) \;\rmd\mu(u),$ for all $\varphi\in\calC_\rmb(X)$, or, equivalently, for all $\varphi\in\calC_\rmb(E).$ The property of equivalence between the weak-star convergence and the lower-semi-continuous weak-star convergence also carries over to the subspaces $\calP_\tau(E)$.

Another useful consequence of the Portmanteau Theorem \cite[Theorem 8.1]{Topsoe1970}, when $X$ is completely regular, is that, if $E$ is a closed subset of $X$, then $\calP_\tau(E)$ is a closed subset of $\calP_\tau(X).$

In what follows, $X$ will be either $H$ or $H_\rmw$, so we turn our attention to each of these spaces. Both are completely regular (see \cite[Footnote 2, Section 2.2]{FRT2019}) and, as we will see, any Borel probability measure on them is tight, so all the above applies.

Concerning the space $H$, since it is metrizable and separable, the space $\calP(H)$ is also metrizable and separable \cite[Theorem 15.12]{AliBor2006}. In particular, topological properties can be characterized by limits of sequences instead of nets. The space $H$ is, in fact, a Polish space (a separable and complete metrizable space), so that, as discussed above,  $\calP(H) = \calP_\tau(H)$. This means the weak-star convergence in this space is equivalent to the lower-semi-continuous weak-star convergence. Moreover, $\calP(E) = \calP_\tau(E)$, for any Borel subset $E\subset H$, and all the properties discussed above for subspaces apply.

We also consider the space $\calP(H_\rmw)$ of Borel probability measures with respect to the weak topology of $H$. Since $H$ is a separable Hilbert space, the Borel sets for the weak topology coincide with the Borel sets for the strong topology (see \cite[Appendix IV.A.1]{FMRT2001}), so that $\calP(H_\rmw)$ coincides with $\calP(H)$ as sets, but they do not coincide as topological spaces. Since $\calC_\rmb(H_\rmw)$ is strictly contained in $\calC_\rmb(H),$ the topology in $\calP(H)$ is strictly finer than that in $\calP(H_\rmw)$. Nevertheless, since any Borel probability measure on $H_\rmw$ is a Borel probability measure on $H$  and since any Borel probability measure on $H$ is tight in $H$, and since any strongly compact set in $H$ is weakly compact, it follows that any measure on $\calP(H_\rmw)$ is tight with respect to compact subsets in the weak topology of $H$. Thus, we also have $\calP(H_\rmw) = \calP_\tau(H_\rmw)$. Moreover, using again that $H_\rmw$ is completely regular and that the space of tight measures on a completely regular space is Hausdorff, we have that $\calP(H_\rmw)$ is a Hausdorff space. We can also prove this directly using characteristic functions, but the previous argument also guarantees that there is no difference in how we choose the topology in the space of Borel probability measures carried by a Borel subspace $E_\rmw$ of $H_\rmw$. It also shows that the weak-star convergence in the space $\calP(H_\rmw)$ is equivalent to the lower-semi-continuous weak-star convergence, i.e. it is equivalent to $\int_H \varphi(u) \;\rmd\mu(u) \leq \liminf_{\alpha}\int_H \varphi(u) \;\rmd\mu_\alpha(u),$ for all bounded and lower-semi-continuous real-valued functions on $H_\rmw$. Similarly, in the space $\calP(E_\rmw)$, where $E$ is a Borel subset of $H$, the weak-star convergence is equivalent to the lower-semi-continuous weak-star convergence, i.e. weak-star convergence with respect to bounded and lower-semi-continuous real-valued functions on $E_\rmw.$ At this point, however, we do not have that topological properties in $\calP(H_\rmw)$ can be characterized by sequences of measures.

One of our main interests in working with the weak topology of $H$, however, arises when we restrict the measures to have support on a bounded, closed, convex set $B$ in $H$, which is compact for the weak topology. With $B$ weakly compact, which is to say that $B_\rmw$ is compact, we find that $\calP(B_\rmw)$ is also compact (see \cite[Theorem 15.11]{AliBor2006}), as discussed above. This is a fundamental property that we will explore on its own. It also implies that $\calP(B_\rmw)$ is metrizable and separable, since $B_\rmw$ is metrizable and separable \cite[Theorem 15.12]{AliBor2006}. This means that we can prove closedness, continuity, and other topological properties in $\calP(B_\rmw)$ by working with sequences of measures.

\subsection{Measure spaces associated with stationary statistical solutions}
\label{secmeasurespacessss}

The stationary statistical solutions form a subspace of the space of measures $\calP(H).$ Here we define this subspace and other auxiliary subspaces with some of the conditions for being a stationary statistical solution.

\begin{definition}
    We define 
    \begin{equation}
        \calP_\fpsss(H) = \left\{ \mu \in \mathcal{P}(H);\; \mu \textrm{ satisfying \eqref{finitemeanenstrophy}, \eqref{meandissipationineqs}, and \eqref{eqliouville}} \right\}
    \end{equation}
    as the subspace of $\calP(H)$ composed of all the Foias-Prodi stationary statistical solutions, endowed with the topology inherited from $\calP(H)$, and denote by $\calP_\fpsss(H_\rmw)$ the corresponding space endowed with the weak-star topology inherited from $\calP(H_\rmw).$ We also define the subspaces
    \[ 
        \calP_\fpsss(B) = \calP_\fpsss(H) \cap \calP(B), \qquad \calP_\fpsss(B_\rmw) = \calP_\fpsss(H_\rmw) \cap \calP(B_\rmw),
    \]
    for any Borel subset $B\subset H.$
\end{definition}

We have seen at the end of \cref{secsss} that any weakly compact set in $H$ that is positively invariant carries at least one time-average measure. We have also seen, at the end of \cref{sec3dnse}, that $B_H(\radius)$ is positively invariant, for any $\radius \geq \radius_0$, and it is also weakly compact. Thus, it follows that 
\begin{equation}
    \label{lemPBHr0notempty}
    \calP_\fpsss(B_\rmw) \neq \emptyset,
\end{equation}
for $B = B_H(\radius)$, for any $\radius \geq \radius_0$, or for any other $B$ which is positively invariant and weakly compact in $H$.

Note, however, that the invariance of $B$ is not required for the existence of stationary statistical solutions within $B$. Indeed, any unstable fixed point or $\alpha$-limit set give rise to stationary statistical solutions, and $B$ may be a neighborhood of such sets and not invariant (c.f. \cref{secmaxasymptimeavg}).

Besides the spaces of stationary statistical solutions, we will also need the following auxiliary spaces.

\begin{definition}
    Define 
    \begin{equation}
        \calP_\fp(H) = \left\{ \mu \in \mathcal{P}(H);\; \mu \textrm{ satisfying \eqref{finitemeanenstrophy}, \eqref{meandissipationineqs}} \right\}
    \end{equation}
    as the subspace of $\calP(H)$ of measures $\mu$ satisfying the finite-mean-enstrophy condition \eqref{finitemeanenstrophy} and the mean energy dissipation inequalities \eqref{meandissipationineqs}, with $\calP_\fp(H_\rmw)$ as the corresponding space endowed with the topology inherited from $\calP(H_\rmw).$ We also define $\calP_\fp(B) = \calP_\fp(H) \cap \calP(B)$ and $\calP_\fp(B_\rmw) = \calP_\fp(H_\rmw) \cap \calP(B_\rmw),$ for any Borel subset $B\subset H$.
\end{definition}

\begin{definition}
    Let
    \begin{equation}
        \label{defLmu}
        L(\mu) = \int_H \ell(\bfu) \;\rmd\mu(\bfu),
    \end{equation}
    with
    \begin{equation}
        \label{defellmu}
        \ell(\bfu) = \nu \|\bfnabla \bfu\|_{L^2}^2 - \dual{\bff, \bfu}_{V', V}.
    \end{equation}
    Define $\calP_\rmd(H)$ as the subspace of $\calP(H)$ of measures $\mu$ with finite enstrophy and satisfying the dissipation bound
    \begin{equation}
        \label{meancleandissipationbound}
        L(\mu) \leq 0,
    \end{equation}
    i.e.
    \begin{equation}
        \calP_\rmd(H) = \left\{\mu\in\calP(H); \mu \textrm{ satisfying \eqref{finitemeanenstrophy}, \eqref{meancleandissipationbound}} \right\},
    \end{equation}
    with $\calP_\rmd(H_\rmw)$ as the corresponding space endowed with the topology inherited from $\calP(H_\rmw).$
    We also define $\calP_\rmd(B) = \calP_\rmd(H) \cap \calP(B)$ and $\calP_\rmd(B_\rmw) = \calP_\rmd(H_\rmw) \cap \calP(B_\rmw),$ for any Borel subset $B\subset H$.
\end{definition}

\begin{definition}
    Define $\calP_\ens(H)$ as the subspace of $\calP(H)$ of measures $\mu$ satisfying the mean-enstrophy bound
    \begin{equation}
        \label{meanenstrophybound}
        \int_H \|\bfnabla \bfu\|_{L^2}^2\;\rmd\mu(\bfu) \leq \frac{\|\bff\|_{V'}^2}{\nu^2},
    \end{equation}
    i.e.
    \begin{equation}
        \calP_\ens(H) = \left\{ \mu \in \mathcal{P}(H);\; \mu \textrm{ satisfying \eqref{meanenstrophybound}} \right\},
    \end{equation}
    with $\calP_\ens(H_\rmw)$ as the corresponding space endowed with the topology inherited from $\calP(H_\rmw).$
    We also define $\calP_\ens(B) = \calP_\ens(H) \cap \calP(B)$ and $\calP_\ens(B_\rmw) = \calP_\ens(H_\rmw) \cap \calP(B_\rmw),$ for any Borel subset $B\subset H$.
\end{definition}

\begin{definition}
    Define $\calP_\rmv(H)$ as the subspace of $\calP(H)$ of measures $\mu$ with finite mean-enstrophy, i.e.
    \begin{equation}
        \calP_\rmv(H) = \left\{ \mu \in \mathcal{P}(H);\; \mu \textrm{ satisfying \eqref{finitemeanenstrophy}} \right\},
    \end{equation}
    with $\calP_\rmv(H_\rmw)$ as the corresponding space endowed with the topology inherited from $\calP(H_\rmw).$
    We also define $\calP_\rmv(B) = \calP_\rmv(H) \cap \calP(B)$ and $\calP_\rmv(B_\rmw) = \calP_\rmv(H_\rmw) \cap \calP(B_\rmw),$ for any Borel subset $B\subset H$.
\end{definition}

These spaces are related by the inclusions
\[
    \calP_\fpsss(H) \subset \calP_\fp(H) \subset \calP_\rmd(H) \subset \calP_\ens(H) \subset \calP_\rmv(H) \subset \calP(H),
\]
with corresponding inclusions for the spaces based on the weak topology of $H$ and on Borel subsets $B$ of $H$. The fact that $\calP_\fp(H) \subset \calP_\rmd(H)$ follows immediately from taking $\psi(s) = s$ in the mean energy dissipation inequalities \eqref{meandissipationineqs}. The only inclusion that is not so immediate is $\calP_\rmd(H)\subset\calP_\ens(H),$ which is proved in \cref{secprelim}. The others follow directly from the definitions.

One may ask why we need so many spaces. For one, the space $\calP_\fpsss(H)$ is the main space of interest, of the Foias-Prodi stationary statistical solutions. The space $\calP_\fp(H)$ enters directly as a fundamental step in the proof of the minimax formula, via the selection principle relating these two spaces via the Liouville-type transport equation. The space $\calP_\ens(H)$ has the essential regularity for proving the continuity of many of the involved functionals, as required by Sion's and Bauer's theorems. The spaces $\calP_\rmd(H)$ and $\calP_\rmv(H)$ appear naturally in the characterization of the extreme points of $\calP_\fp(H).$

\subsection{Sion's minimax theorem}

As done in \cite{TobGolDoe2018}, we use Sion's Minimax Theorem \cite{Sion1958} to switch the order of the maximization over all measures in $\calP_\fp(B_\rmw)$ and the minimization over all test functionals.

\begin{theorem}[Sion's Minimax Theorem]
  \label{thmsionminimax}
  Let $U$ be a compact convex subset of a topological vector space and $Z$ a convex subset of a possibly different topological vector space. Let $f:U\times Z \rightarrow \RR$ be such that
  \begin{enumerate}
    \item $f(\cdot,v)$ is upper semi-continuous and quasi-concave on $U$, for each $v\in Z$;
    \item $f(u,\cdot)$ is lower semi-continuous and quasi-convex on $Z$, for each $u\in U$.
  \end{enumerate}
  Then,
  \[ \max_{u\in  U} \inf_{v\in Z} f(u,v) = \inf_{v\in Z} \max_{u\in U} f(u,v).
  \]
\end{theorem}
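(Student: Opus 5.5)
We prove the nontrivial inequality by contradiction, following Komiya's elementary argument. The inequality $\max_{u}\inf_{v} f\le \inf_v\max_u f$ is trivial. Note also that both maxima are attained. For each $v$, $u\mapsto f(u,v)$ is upper semi-continuous on the compact set $U$. So is $u\mapsto\inf_v f(u,v)$, being an infimum of upper semi-continuous functions.

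Suppose $\max_u\inf_v f<\alpha<\inf_v\max_u f$ for some real $\alpha$. For each $v\in Z$ let $U_v=\{u\in U;\, f(u,v)\ge\alpha\}$, which is closed in $U$ by upper semi-continuity. The first inequality says $\bigcap_v U_v=\emptyset$. By compactness of $U$ there are finitely many $v_1,\dots,v_n$ with $\bigcap_i U_{v_i}=\emptyset$, i.e. $\max_u\min_i f(u,v_i)<\alpha$. Hence it suffices to prove the following finite claim: if $\alpha<\inf_{v\in Z}\max_{u\in U} f(u,v)$, then $\alpha<\max_u\min_{1\le i\le n} f(u,v_i)$ for every finite family $v_1,\dots,v_n\in Z$.

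\textbf{Key two-point lemma.} The heart of the matter is the following statement: if $\alpha<\min_{z\in[v_1,v_2]}\max_u f(u,z)$ (the minimum over the segment being attained or replaced by an infimum), then $\alpha<\max_u\min\{f(u,v_1),f(u,v_2)\}$. Suppose not, and pick $\beta$ with $\max_u\min\{f(u,v_1),f(u,v_2)\}<\beta<\alpha$. Then the sets $A=\{f(\cdot,v_1)\ge\beta\}$ and $B=\{f(\cdot,v_2)\ge\beta\}$ are disjoint and closed. For $z\in[v_1,v_2]$, consider $C_z=\{u;\, f(u,z)\ge\alpha\}$. It has the following properties:
\begin{enumerate}[(a)]
\item $C_z$ is nonempty, since $\max_u f(u,z)>\alpha$;
\item $C_z$ is convex, by quasi-concavity of $f(\cdot,z)$;
\item $C_z\subset A\cup B$, by quasi-convexity of $f(u,\cdot)$, which gives $f(u,z)\le\max\{f(u,v_1),f(u,v_2)\}$.
\end{enumerate}
Being convex, hence connected, each $C_z$ lies entirely in $A$ or entirely in $B$. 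This splits the segment into $I=\{z;\, C_z\subset A\}\ni v_1$ and $J=\{z;\, C_z\subset B\}\ni v_2$, which are disjoint.

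The main obstacle is to show that $I$ (and symmetrically $J$) is closed in the segment; connectedness of $[v_1,v_2]$ then gives the contradiction. Let $z_k\to z$ with $z_k\in I$, and suppose $z\in J$. Pick $u\in C_z$, so that $f(u,z)\ge\alpha>\beta$. By lower semi-continuity of $f(u,\cdot)$, we get $f(u,z_k)>\beta$ for $k$ large. To conclude we need $u\in C_{z_k}$, which requires level $\alpha$ rather than $\beta$. So the levels must be handled carefully: one works with $C'_z=\{f(\cdot,z)>\beta\}$ in place of $C_z$, and uses the strict gap $\beta<\alpha$ to guarantee $C'_z\neq\emptyset$. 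This shows $u\in C'_{z_k}\cap B$. But $C'_{z_k}\subset A$, contradicting $A\cap B=\emptyset$.

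\textbf{Induction on $n$.} Assume the finite claim holds for $n-1$ points and every compact convex $U$. Set $U'=\{u\in U;\, f(u,v_n)\ge\alpha\}$, which is compact and convex. Suppose $\max_{U'}\min_{i<n} f(u,v_i)\le\alpha$ (or $U'$ is empty). Then the inductive hypothesis applied on $U'\times Z$ forces some $v_0\in Z$ with $\max_{u\in U'}f(u,v_0)\le\alpha$. Hence $\max_u\min\{f(u,v_0),f(u,v_n)\}\le\alpha$, contradicting the two-point lemma. Otherwise $\max_{U'}\min_{i<n} f(u,v_i)>\alpha$, and any maximizer $u\in U'$ satisfies $\min_{i\le n} f(u,v_i)>\alpha$ up to the same strict-versus-nonstrict level bookkeeping, which is exactly the finite claim for $n$. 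The induction thus closes, yielding the contradiction with $\max_u\min_i f(u,v_i)<\alpha$ and proving the theorem.
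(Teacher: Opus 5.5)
The paper does not prove this theorem. It quotes it from Sion, whose original proof relies on a Knaster--Kuratowski--Mazurkiewicz-type intersection lemma. It only remarks that $\sup$ may be replaced by $\max$ because $U$ is compact, and it refers to Komiya for the $\min$--$\sup$ form. Your proposal reproduces Komiya's elementary proof instead, and it is correct:
\begin{itemize}
\item a finite-intersection reduction to finitely many $v_i$;
\item a two-point lemma proved by connectedness of a segment;
\item induction on $n$ through the superlevel set $U'$.
\end{itemize}
What your route buys is a self-contained argument using only semicontinuity, convexity and connectedness of $[0,1]$; the citation buys brevity.

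The strict-versus-nonstrict bookkeeping you deferred does need care in three places. All three are fixed using the strict gap $\alpha<m:=\inf_{v\in Z}\max_{u\in U}f(u,v)$.

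\emph{Two-point lemma.} ``Suppose not'' gives only $\max_u\min\{f(u,v_1),f(u,v_2)\}\le\alpha$, so a $\beta$ strictly below $\alpha$ may not exist. Choose $\beta$ strictly between this quantity and $\min_{z\in[v_1,v_2]}\max_u f(u,z)$ instead. This minimum is attained, since $z\mapsto\max_u f(u,z)$ is a supremum of lower semi-continuous functions. Then run the argument entirely with $C'_z=\{f(\cdot,z)>\beta\}$, taking the point $u$ in the closedness step from $C'_z$; this set is nonempty because $\beta<\max_u f(u,z)$.

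\emph{Induction step.} The hypothesis on $U'$ yields only $\inf_{v\in Z}\max_{u\in U'}f(u,v)\le\alpha$. It does not give a $v_0$ with $\max_{U'}f(\cdot,v_0)\le\alpha$. Take $\alpha'\in(\alpha,m)$ and pick $v_0$ with $\max_{U'}f(\cdot,v_0)<\alpha'$. Then $\max_U\min\{f(\cdot,v_0),f(\cdot,v_n)\}<\alpha'$, which contradicts the two-point lemma applied at level $\alpha'$.

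\emph{Remaining case.} This case gives only $\max_U\min_{i\le n}f(\cdot,v_i)\ge\alpha$. Since this holds for every $\alpha<m$, it yields $\max_U\min_{i\le n}f(\cdot,v_i)\ge m>\alpha$, which is the strict form you need.
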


The statement in \cite{Sion1958} is actually given with the supremum instead of maximum, but since $U$ is compact these two versions coincide. See also the version stated in the Introduction of \cite{Komiya1988}, with the corresponding result $\min_u \sup_v g(u,v) = \sup_v \min_u g(u,v)$, for $g(u,v) = -f(u, v)$.

Given two sets $A$ and $B$, a function $f:A\times B\rightarrow\RR$ is called \textbf{quasi-concave} on $A$ if $\{u\in A; \; f(u,v)\geq c\}$ is a convex set in $A$, for any $v\in B$ and any $c\in \RR$, and is called \textbf{quasi-convex} on $B$ if $\{v\in B; \; f(u,v)\leq c\}$ is a convex set in $B$, for any $u\in A$ and any $c\in \RR$.

In our case, we apply this minimax result to functions which are linear in one variable and affine in the other. If $X$ is a vector space and $A\subset X$ is convex, then a function $f:A\rightarrow \RR$ is called \textbf{affine} when $f(\theta u +(1-\theta)v) = \theta f(u) + (1-\theta) f(v)$, for all $u, v\in A$ and all $0\leq \theta \leq 1$. As such, the function is quasi-concave and quasi-convex on either variable, so that we only need to worry about continuity of $f$ and the compactness of $U$.

\subsection{Extreme points}

An \textbf{extreme point} of a subset $A$ of a real vector space $X$ is any point in $A$ that is not in between two other points in $A$, i.e. $x$ is an extreme point of $A$ if it cannot be written as $x = (1 - \theta) y + \theta z,$ for $y, z \in A$, $y\neq z,$ and $0 < \theta < 1.$ We denote the subset of extreme points of a set $A$ by
\begin{equation}\label{def:extreme:set}
    \Ex(A) = \{x\in A; \; x \textrm{ is an extreme point of } A\}.
\end{equation}

We are particularly interested in extreme points of convex sets. In relation to that, a non-empty  subset $F$ of a convex set $C$ is called a \textbf{face} of $C$ if for any $y, z \in C$ and $0 < \theta < 1$ such that $(1-\theta)y + \theta z \in F,$ then $y, z\in F$ (see \cite[Chapter 8]{Simon2011} or \cite[Section 7.12]{AliBor2006}). For example, we prove in \cref{lemPdfaceofP} that the set of probability measures with finite enstrophy is a face of the convex set of probability measures. Extreme points of faces of convex sets are easy to relate to extreme points of the convex set. Indeed, if $F$ is a face of a convex set $C,$ then $\Ex(F) = \Ex(C) \cap F.$\footnote{If $x\in \Ex(C) \cap F,$ it is immediate to deduce, since $F\subset C,$ that $x\in\Ex(F).$ Conversely, if $x\in \Ex(F)$ and $x = (1 - \theta)y + \theta z$ with $y, z\in C$ and $0 < \theta < 1,$ then the fact that $F$ is a face of $C$ implies that $y, z\in F$ and thus, since $x\in\Ex(F),$ this means $y = z$. Hence, $x\in\Ex(C)$ with $x\in F,$ i.e. $x \in \Ex(C) \cap F.$}

Concerning the space $\calP_\fp(B_\rmw),$ we also need the characterization of the extreme points of a subset of a convex set with linear constraints. For that, we use the following result by Dubins on general convex sets under linear restrictions (see \cite{Dubins1962} for the proof). A related result for measures appears in \cite[Theorem 2.1]{Winkler1988}, but some of the hypotheses are more restrictive than those of Dubins and are not fulfilled in our setting.

\begin{theorem}[Dubins]
    \label{thmdubins}
    Let $A$ be a convex subset of a real vector space $X.$ Suppose $A$ is linearly bounded and linearly closed, in the sense that every line in $X$ intersects $A$ in a bounded and closed subset of the line. Let $N$ be the intersection of $A$ with $n$ hyperplanes in $X.$ Then, every extreme point in $N$ is a convex combination of at most $n+1$ extreme points of $A.$
\end{theorem}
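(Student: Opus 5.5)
The statement is a classical result quoted from \cite{Dubins1962}. I would prove it by a dimension count on the minimal face of $A$ containing a given extreme point of $N$, followed by Minkowski's theorem (the finite-dimensional Krein--Milman theorem) and Carath\'eodory's theorem in that face.

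Write the hyperplanes as $\{f_i = c_i\}$, $i=1,\ldots,n$, with $f_i$ linear functionals on $X$, so that $N = A\cap\bigcap_i\{f_i=c_i\}$. Fix $x\in\Ex(N)$ and define
\[
F_x=\{y\in A;\ \exists\, z\in A,\ \exists\,\theta\in(0,1],\ x=\theta y+(1-\theta)z\},
\]
the set of points $y\in A$ such that the segment from $y$ through $x$ can be prolonged slightly beyond $x$ inside $A$. Let $W$ be the linear span of $F_x - x$. The first step is to check three routine facts:
\begin{enumerate}[(a)]
\item $F_x$ is convex and is a face of $A$. This follows by manipulating convex combinations.
\item $x$ lies in the algebraic relative interior of $F_x$: for every $v\in W$ there is $\varepsilon>0$ with $x\pm\varepsilon v\in F_x$.
\item $\Ex(F_x)=\Ex(A)\cap F_x$, which is the face property recalled in the footnote above.
\end{enumerate}

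The key step is to show $\dim W\le n$. Suppose instead that $\dim W>n$. The restrictions of $f_1,\ldots,f_n$ to $W$ then have a common nontrivial kernel, so there is $v\in W\setminus\{0\}$ with $f_i(v)=0$ for all $i$. By (b), $x\pm\varepsilon v\in F_x\subset A$ for some $\varepsilon>0$. Both points also satisfy $f_i=c_i$, so they lie in $N$. Then $x=\tfrac12(x+\varepsilon v)+\tfrac12(x-\varepsilon v)$ writes $x$ as the midpoint of two distinct points of $N$, contradicting $x\in\Ex(N)$.

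Consequently $F_x$ sits in the affine space $x+W$ of dimension $d\le n$. In this finite-dimensional space I would use the hypotheses on $A$:
\begin{enumerate}[(a)]
\setcounter{enumi}{3}
\item $F_x$ is closed. I would first show that $F_x=A\cap(x+W)$; this needs checking, and I would deduce it from (b) by prolonging segments through $x$ using linear boundedness and closedness along lines. A convex set in $\RR^d$ whose intersection with every line is closed is itself closed, by working on the relative interior and the boundary points of its closure.
\item $F_x$ is bounded. A closed unbounded convex set in $\RR^d$ contains a ray, which contradicts linear boundedness.
\end{enumerate}
Thus $F_x$ is a compact convex subset of a $d$-dimensional space. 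By Minkowski's theorem it is the convex hull of its extreme points, and by Carath\'eodory's theorem $x$ is a convex combination of at most $d+1\le n+1$ of them. By (c), these are extreme points of $A$, which proves the statement.

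I expect the main obstacle to be the closedness in (d), i.e.\ passing from ``linearly closed'' to topological closedness of the finite-dimensional face, together with the identification of $F_x$ with a slice of $A$. If that identification fails, the fallback is to replace $F_x$ by $A\cap(x+W)$ and rerun the dimension argument on it directly. That works provided $x$ remains in the relative algebraic interior, which is exactly what (b) gives.
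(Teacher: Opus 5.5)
Your argument is correct and complete. The paper gives no proof of this statement; it only refers to \cite{Dubins1962}. So your proposal is a self-contained replacement for that citation rather than an alternative to an argument in the text.

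The structure is the standard one:
\begin{itemize}
\item $F_x$ is the minimal face of $A$ containing $x$, and (b) places $x$ in its relative algebraic interior.
\item Extremality of $x$ in $N$ alone forces $\dim W\le n$, since a nonzero $v\in W$ with $f_1(v)=\cdots=f_n(v)=0$ would give $x\pm\varepsilon v\in N$.
\item Minkowski and Carath\'eodory in $x+W$, together with $\Ex(F_x)=\Ex(A)\cap F_x$, finish the proof.
\end{itemize}

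The step you flagged as the main obstacle is in fact immediate and uses neither linear boundedness nor linear closedness. If $y\in A\cap(x+W)$, then $-(y-x)\in W$. By (b), $z=x-\varepsilon(y-x)\in F_x\subset A$ for some $\varepsilon>0$, and $x=\theta y+(1-\theta)z$ with $\theta=\varepsilon/(1+\varepsilon)$, so $y\in F_x$. Hence $F_x=A\cap(x+W)$, and no fallback is needed.

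The hypotheses on $A$ enter only in (d)--(e), and there you can use $x$ itself as the relative interior point. In finite dimensions, (b) means $x$ lies in the topological relative interior of $F_x$. So for $p\in\overline{F_x}$ the half-open segment $[x,p)$ lies in $F_x$, and closedness of $A$ along the line through $x$ and $p$ gives $p\in A\cap(x+W)=F_x$. Boundedness then follows from your recession-ray argument.

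Your route also gives a slightly sharper conclusion than the cited statement: $x$ lies in the relative interior of a face of $A$ of dimension at most $n$. For $n=1$ this says $x$ is either extreme in $A$ or interior to a segment of $A$ with extreme endpoints, which is the picture behind \cref{propextremewithrestriction}.
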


Notice there is no topological structure in the vector space $X$ of the theorem, so the hyperplanes are simply the sets of the form $\{x\in X; \; L(x) = c\}$, for some constant $c$ and some linear function $L,$ with no continuity assumption. 

Our restrictions, however, are of the form $\{x\in X; \; L(x) \leq c\}.$ This can be broken down into two cases, whether the extremum point is in the half-space $\{L < c\}$ or in the hyperplane $\{L = c\}.$ For that, we have the following result.

\begin{proposition}
    \label{propextremewithrestriction}
    Let $A$ be a linearly bounded and linearly closed convex subset of a real vector space $X.$ Let $L:X \rightarrow \RR$ be a linear functional in $X$ and let $c\in\RR.$ Then, the extreme points in $N = A \cap \{ L \leq c\}$ are either the extreme points of $A$ or the extreme points of $N' = A \cap \{L = c\}.$ Therefore, the extreme points of $N$ are convex combinations of at most $2$ extreme points of $A.$ Moreover, in the case that an extreme point $x$ of $N'$ is a convex combination $x = (1 - \theta)y + \theta z$ of two distinct points $y, z \in A,$ with $0 < \theta < 1,$ then $y$ and $z$ are in distinct strict half-spaces divided by the hyperplane, say $L(y) < c < L(z).$
\end{proposition}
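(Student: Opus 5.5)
Let $x\in\Ex(N)$. I will split into two cases according to the value of $L(x)$; in each case the conclusion comes either from the extremality of $x$ in $A$ or from \cref{thmdubins}. The final claim about the two points is handled afterwards by direct computation.

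\emph{Case $L(x)<c$.} I claim $x\in\Ex(A)$. Suppose $x=(1-\theta)y+\theta z$ with $y,z\in A$, $y\neq z$, and $0<\theta<1$. Consider the segment $x_t = x + t(z-y)$ for small $|t|$. Each $x_t$ lies in $A$ for $|t|$ small enough. Indeed, $x$ lies in the relative interior of the segment $[y,z]$, and $x_t$ stays on that segment as long as $-\theta \le t \le 1-\theta$. By continuity of $t\mapsto L(x_t)$, which is affine in $t$, we also have $L(x_t)<c$ for $|t|$ small. Hence $x_{\pm t}\in N$ are distinct and $x$ is their midpoint, contradicting $x\in\Ex(N)$. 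So $x\in\Ex(A)$, which is trivially a convex combination of one extreme point of $A$.

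\emph{Case $L(x)=c$.} Then $x\in N'\subset N$, and any nontrivial decomposition of $x$ inside $N'$ is in particular one inside $N$. Hence $x\in\Ex(N')$. The set $N'$ is the intersection of $A$ with the single hyperplane $\{L=c\}$. Since $A$ is linearly bounded and linearly closed, \cref{thmdubins} with $n=1$ shows that $x$ is a convex combination of at most $2$ extreme points of $A$. Combining the two cases gives both the first and the second assertions.

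\emph{The final claim.} Let $x\in\Ex(N')$ with $x=(1-\theta)y+\theta z$, where $y\neq z$ are in $A$ and $0<\theta<1$. By linearity, $c=L(x)=(1-\theta)L(y)+\theta L(z)$.

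If $L(y)=L(z)$, then both values equal $c$. In that case $y,z\in N'$, which contradicts extremality of $x$ in $N'$. Hence $L(y)\neq L(z)$.

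Since $c$ is a strict convex combination of two distinct numbers, it lies strictly between them. After relabeling, $L(y)<c<L(z)$.

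\emph{Main obstacle.} The only delicate point is the first case: the perturbation must remain in $A$, and this is guaranteed because it stays inside the segment $[y,z]$. No topology is needed, since $L$ restricted to a line is an affine function of one real variable.
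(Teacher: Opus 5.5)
Your proposal is correct and follows the paper's proof essentially step for step. It uses the same split by whether $L(x)<c$ or $L(x)=c$. In the first case you perturb along the segment $[y,z]$ to contradict extremality in $N$; in the second you use extremality in $N'$ together with Dubins' theorem for one hyperplane. Your argument that $L(y)=L(z)$ would force $y,z\in N'$ is also the paper's. Your route to the final claim, showing first that $L(y)\neq L(z)$, is if anything slightly cleaner than the paper's ``say $L(y),L(z)\le c$'' case reduction.
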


\begin{proof}
    Let $x$ be an extreme point of $N$. If $x$ belongs to $N'$, then it must also be an extreme point of $N'$ since $N'\subset N.$ In this case, \cref{thmdubins} yields the desired characterization of the extreme point.
    
    Suppose now that the extreme point $x$ of $N$ belongs to $N \setminus N'$, i.e. $L(x) < c.$ We want to argue that $x$ is an extreme point of $A.$ If $x$ were not an extreme point of $A$, then there would be two distinct points $y, z\in A$ and a real number $0 < \theta < 1$ such that $x = (1-\theta)y + \theta z.$ By linearity
    \[ L(x) = (1-\theta)L(y) + \theta L(z) < c.
    \]
    By the continuity of the real valued linear function $s \mapsto (1-s)L(y) + s L(z)$ and the fact that $0 < \theta < 1,$ there exists $\varepsilon > 0$ such that $0 < \theta - \varepsilon < \theta + \varepsilon < 1$, with
    \[ L(x_\pm) = (1 - \theta_{\pm})L(y) + \theta_\pm L(z) < c,
    \]
    for $\theta_\pm = \theta \pm \varepsilon$ and $x_\pm = (1 - \theta_\pm)y + \theta_\pm z.$ By the convexity of $A$, both $x_\pm$ belong to $A$ and, thanks to $L(x_\pm) < c,$ they belong to $N$. Since $y \neq z,$ we have $x_+ \neq x_-.$ Moreover,
    \[ x = \frac{1}{2}x_- + \frac{1}{2}x_+.
    \]
    This shows that $x$ is not an extreme point of $N,$ which is a contradiction. Thus, $x$ must also be an extreme point of $A.$

    For the last result, if $y$ and $z$ were not in distinct strict half-spaces divided by the hyperplane, then we would have say $L(y), L(z) \leq c$. But the condition $L(x) = c$ implies $L(y) = L(z) = c.$ This, however, would mean that $x$ is not an extreme point of $N',$ which would be a contradiction. Therefore, they must be in distinct strict half-spaces divided by the hyperplane.
\end{proof}

\subsection{Extreme points in measure spaces}

If $Y$ is a separable and metrizable topological space, it follows from \cite[Theorem 15.9]{AliBor2006} that the set of extreme points of $\calP(Y)$ is the set of Dirac probability measures $\delta_u$, with $u\in Y$. Since any compact and metrizable space is separable, this holds, in particular, for a compact and metrizable space $K$. We use this, together with the results of the previous section, to give, in \cref{secextremeconstrainedmeasures}, the complete characterization of the extreme points of $\calP_\fp(B_\rmw)$.

\subsection{Extreme points and Bauer's maximum principle}

The following maximum principle characterizes the maximum points of an upper semi-continuous convex function as extreme points, under compactness and convexity conditions on the domain of the function. 
\begin{theorem}[Bauer's Maximum Principle]
    \label{thmbauer}
    If $K$ is a compact convex subset of a locally convex Hausdorff space, then every upper semi-continuous convex function on $K$ has a maximum that is an extreme point.
\end{theorem}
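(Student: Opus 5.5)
This is Bauer's Maximum Principle, a classical result, so I would reproduce its standard proof. It combines the finite intersection property from compactness with a Zorn's lemma argument on closed extremal subsets, often called closed faces. Let $K$ be compact and convex in a locally convex Hausdorff space $X$, and let $f:K\rightarrow\RR$ be upper semi-continuous and convex.

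\textbf{Step 1: the maximum is attained.} The superlevel sets $\{x\in K;\; f(x)\geq c\}$ are closed by upper semi-continuity. The family of nonempty such sets with $c<\sup_K f$ has the finite intersection property, so compactness gives a point where $f = \sup_K f$. In particular $\sup_K f<\infty$, and the set $M = \{x\in K;\; f(x) = \max_K f\}$ is nonempty and closed, hence compact.

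\textbf{Step 2: $M$ is extremal in $K$.} Here extremal means that if $(1-\theta)y+\theta z\in M$ with $y,z\in K$ and $0<\theta<1$, then $y,z\in M$. This follows from convexity: with $m=\max_K f$ we get $m \leq (1-\theta)f(y)+\theta f(z)\leq m$, which forces $f(y)=f(z)=m$.

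\textbf{Step 3: a minimal closed extremal subset.} Consider the family $\mathcal{S}$ of nonempty closed extremal subsets of $K$ contained in $M$, ordered by reverse inclusion. Take a chain in $\mathcal{S}$. Its intersection is nonempty by compactness and the finite intersection property, it is closed, and it is still extremal, since extremality is preserved under intersections. Zorn's lemma then gives a minimal element $S\in\mathcal{S}$.

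\textbf{Step 4: $S$ is a single point.} Suppose $S$ contains two distinct points $p\neq q$. By local convexity and Hausdorffness, Hahn--Banach gives a continuous linear functional $\Lambda$ with $\Lambda(p)\neq\Lambda(q)$. Since $S$ is compact, $\Lambda$ attains its maximum on $S$, and the set $S'=\{x\in S;\; \Lambda(x)=\max_S\Lambda\}$ is nonempty and closed. It is also extremal in $S$, by the same argument as in Step 2 applied to the affine function $\Lambda$, and therefore extremal in $K$, since an extremal subset of an extremal set is extremal. But $S'$ misses one of $p,q$, so $S'\subsetneq S$, contradicting minimality. Hence $S=\{x_0\}$.

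\textbf{Conclusion.} Extremality of the singleton $\{x_0\}$ in $K$ means exactly that $x_0\in\Ex(K)$. Since $x_0\in M$, the function $f$ attains its maximum at the extreme point $x_0$.

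The main obstacle is Step 4. There the locally convex Hausdorff hypothesis is essential, because it is what provides a continuous functional separating two points. One must also check carefully the transitivity of extremality used there. The rest is routine compactness bookkeeping.

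In the paper's application, $f(\mu)=G(\mu,\Psi)$ is affine, hence convex, and upper semi-continuous on $\calP_\fp(B_\rmw)$. This set is compact and convex inside the locally convex space of finite signed measures with the weak-star topology induced by $\calC_\rmb(H_\rmw)$, so the theorem applies there.
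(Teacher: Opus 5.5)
Your proof is correct and is the standard one: the set of maximizers is a nonempty compact extremal subset of $K$, and Zorn's lemma together with point-separating continuous linear functionals (Hahn--Banach in the locally convex Hausdorff space) shrinks it to a single extreme point of $K$. The paper gives no proof of its own and simply cites Aliprantis--Border, whose argument runs along exactly these lines, so your approach coincides with the paper's.
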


A proof can be found in \cite[7.69 Bauer Maximum Principle]{AliBor2006}.

\section{Preliminaries}
\label{secprelim}

Here we establish the necessary results for assembling the proof of the optimal minimax formula.

\subsection{Regularity results for the spaces of measures}

The mean energy dissipation inequalities for the Foias-Prodi stationary statistical solutions imply a uniform bound on the enstrophy.
\begin{lemma}
    \label{lemmuenstrophybound}
    Let $\mu\in \calP_\rmd(H)$. Then
    \begin{equation}
        \label{eqmuenstrophybound}
        \int_H \|\bfnabla \bfu\|_{L^2}^2\;\rmd\mu(\bfu) \leq \frac{\|\bff\|_{V'}^2}{\nu^2}.
    \end{equation}
    This means that $\calP_\rmd(H) \subset \calP_\ens(H),$ and, consequently, that $\calP_\rmd(H_\rmw) \subset \calP_\ens(H_\rmw),$ $\calP_\rmd(B) \subset \calP_\ens(B),$ and $\calP_\rmd(B_\rmw) \subset \calP_\ens(B_\rmw),$ for any Borel subset $B\subset H.$
\end{lemma}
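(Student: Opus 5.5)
The plan is to start from the dissipation bound $L(\mu)\le 0$, which is meaningful because $\mu\in\calP_\rmd(H)$ has finite mean enstrophy, and then control the forcing term by Cauchy--Schwarz, first pointwise and then in the mean. Write $E = \int_H \|\bfnabla\bfu\|_{L^2}^2\,\rmd\mu(\bfu)$, which is finite by \eqref{finitemeanenstrophy}.

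The first step is to justify splitting $L(\mu)$ into two integrals. For $\mu$-a.e.\ $\bfu$ we have $\bfu\in V$, and the duality estimate gives $|\dual{\bff,\bfu}_{V',V}| \le \|\bff\|_{V'}\|\bfnabla\bfu\|_{L^2}$. The right-hand side is $\mu$-integrable, since
\[
\int_H \|\bfnabla\bfu\|_{L^2}\,\rmd\mu \le E^{1/2}
\]
by Cauchy--Schwarz on the probability space. Hence both terms of $\ell$ are integrable, and the inequality $L(\mu)\le 0$ reads
\[
\nu E \le \int_H \dual{\bff,\bfu}_{V',V}\,\rmd\mu(\bfu) \le \|\bff\|_{V'}\int_H\|\bfnabla\bfu\|_{L^2}\,\rmd\mu \le \|\bff\|_{V'}E^{1/2}.
\]

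The second step is to conclude. If $E=0$ the bound is trivial. Otherwise we divide by $E^{1/2}$ to get $\nu E^{1/2}\le \|\bff\|_{V'}$, and squaring gives \eqref{eqmuenstrophybound}.

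The inclusions then follow immediately from the definitions, since the remaining spaces are intersections with $\calP(B)$ or the same sets with a different topology. The only delicate point is the integrability and measurability needed to split the integral. For this we use that $\bfu\mapsto\|\bfnabla\bfu\|_{L^2}$ (set to $+\infty$ off $V$) is lower semicontinuous, hence Borel, on $H$. We also use that $\mu(H\setminus V)=0$ because $E<\infty$.
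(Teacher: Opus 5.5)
Your proof is correct and follows essentially the same route as the paper: bound $\dual{\bff,\bfu}_{V',V}$ by $\|\bff\|_{V'}\|\bfnabla\bfu\|_{L^2}$ and absorb it into the enstrophy term. The only difference is that the paper applies Young's inequality pointwise before integrating, which avoids splitting the integral and dividing by $E^{1/2}$, whereas you apply Cauchy--Schwarz in the mean and then divide; both give the same constant $\|\bff\|_{V'}^2/\nu^2$.
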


\begin{proof}
    Since $\mu\in\calP_\rmd(H),$ we have
    \[
        \int_H \left\{ \nu \|\bfnabla \bfu\|_{L^2}^2 - \dual{\bff, \bfu}_{V', V} \right\} \;\rmd\mu(\bfu) \leq 0.
    \]
    Using that
    \[
        \dual{\bff, \bfu}_{V', V} \leq \|\bff\|_{V'}\|\bfnabla \bfu\|_{L^2} \leq \frac{1}{2\nu}\|\bff\|_{V'}^2 + \frac{\nu}{2}\|\bfnabla\bfu\|_{L^2}^2,
    \]
    we obtain
    \[
        \nu \|\bfnabla \bfu\|_{L^2}^2 - \dual{\bff, \bfu}_{V', V} \geq \frac{\nu}{2} \|\bfnabla \bfu\|_{L^2}^2 - \frac{1}{2\nu}\|\bff\|_{V'}^2,
    \]
    so that
    \begin{multline*}
        \frac{\nu}{2} \int_H  \|\bfnabla \bfu\|_{L^2}^2 \;\rmd\mu(\bfu) \leq \int_H \left\{ \nu \|\bfnabla \bfu\|_{L^2}^2 - \dual{\bff, \bfu}_{V', V} + \frac{1}{2\nu}\|\bff\|_{V'}^2 \right\} \;\rmd\mu(\bfu) \\ \leq \frac{1}{2\nu} \int_H \|\bff\|_{V'}^2 \;\rmd\mu(\bfu) = \frac{1}{2\nu} \|\bff\|_{V'}^2,
    \end{multline*}
    proving the result.
\end{proof}

The uniform bound on the mean enstrophy valid for the measures in $\calP_\ens(H)$, and, in particular, for the measures in $\calP_\fp(H)$ and $\calP_\rmd(H)$, implies a (uniform) tightness property with respect to bounded sets in $V$.
\begin{lemma}
    \label{lemPensuniftightinV}
    For any $R > 0$ and any $\mu \in \calP_\ens(H)$,
    \begin{equation}\label{ineq:mu:BVRc}
        \mu\left(H \setminus \left\{\bfu\in V; \;\|\bfnabla \bfu\|_{L^2} \leq  R\right\} \right) \leq \frac{\|\bff\|_{V'}^2}{\nu^2 R^2}.
    \end{equation}
\end{lemma}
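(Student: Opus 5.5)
This is a Chebyshev (Markov) inequality applied to the enstrophy functional. Set $E_R = \{\bfu\in V;\; \|\bfnabla\bfu\|_{L^2}\le R\}$. The plan has three steps: check that $E_R$ is Borel in $H$, show that its complement is contained in a set where the enstrophy is at least $R^2$, and then apply the mean-enstrophy bound \eqref{meanenstrophybound} that defines $\calP_\ens(H)$.

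\emph{Measurability.} Extend the enstrophy to all of $H$ by setting $\|\bfnabla\bfu\|_{L^2}^2 = +\infty$ for $\bfu\in H\setminus V$. This is the same convention under which the integral in \eqref{meanenstrophybound} makes sense. The extended functional is lower semicontinuous on $H$, even for the weak topology. Indeed, if $\bfu_n\to\bfu$ in $H$ and $\liminf_n\|\bfnabla\bfu_n\|_{L^2}<\infty$, then a subsequence is bounded in $V$. It therefore converges weakly in $V$, and the limit must be $\bfu$. Weak lower semicontinuity of the $V$-norm then gives $\bfu\in V$ and $\|\bfnabla\bfu\|_{L^2}\le\liminf_n\|\bfnabla\bfu_n\|_{L^2}$. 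Hence $E_R$ is a sublevel set of this lower semicontinuous functional. It is a closed subset of $H$, in fact the closed ball of radius $R$ in $V$, which is weakly compact in $H$. In particular it is Borel, so $\mu(H\setminus E_R)$ is well defined.

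\emph{Chebyshev step.} On $H\setminus E_R$ we have $\|\bfnabla\bfu\|_{L^2}^2 > R^2$; this includes the points of $H\setminus V$, where the value is $+\infty$. So
\[
R^2\,\mu(H\setminus E_R) \le \int_{H\setminus E_R}\|\bfnabla\bfu\|_{L^2}^2\,\rmd\mu(\bfu) \le \int_H \|\bfnabla\bfu\|_{L^2}^2\,\rmd\mu(\bfu) \le \frac{\|\bff\|_{V'}^2}{\nu^2}.
\]
The last inequality is \eqref{meanenstrophybound}. Dividing by $R^2$ gives \eqref{ineq:mu:BVRc}.

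The only point needing care is the measurability and the convention for $H\setminus V$. The finiteness of the mean enstrophy forces $\mu(H\setminus V)=0$, and the Chebyshev argument absorbs that set automatically. Everything else is routine.
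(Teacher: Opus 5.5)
Your proof is correct and follows the paper's argument: the paper just invokes Chebyshev's inequality together with the mean-enstrophy bound \eqref{meanenstrophybound}. You also check that $\{\bfu\in V;\;\|\bfnabla\bfu\|_{L^2}\le R\}$ is closed (hence Borel) in $H$ and treat the $+\infty$ convention on $H\setminus V$; the paper leaves both points implicit, and you handle them correctly.
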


\begin{proof}
    Just use the Chebyshev inequality together with the mean enstrophy bound in \cref{meanenstrophybound}.
\end{proof}

A fundamental consequence of the uniform tightness of measures given in \cref{lemPensuniftightinV} is that the topologies in the measure spaces taken with respect to either the weak and or the strong topologies on a bounded set in $H$ coincide.

\begin{lemma}
    \label{lemPensBequalsPensBw}
    Let $B$ be a closed subset of $H$. Then,
    $\calP_\ens(B) = \calP_\ens(B_\rmw)$ as topological spaces. In other words, in these spaces, $\mu_n \wconv \mu$ means $\int_H \varphi(u) \;\rmd\mu_n(u) \rightarrow \int_H \varphi(u) \;\rmd\mu(u),$ for all $\varphi$ in either $\calC_\rmb(B)$ or $\calC_\rmb(B_\rmw).$
\end{lemma}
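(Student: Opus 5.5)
The plan is to show first that the two spaces coincide as sets, and then that the two topologies coincide. The weak-to-strong direction of the topology comparison is the nontrivial part. It will come from the uniform tightness of \cref{lemPensuniftightinV} combined with the Portmanteau Theorem on both $H$ and $H_\rmw$.

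\emph{Sets and the easy inclusion.} Since $H$ is separable, the Borel $\sigma$-algebras of $H$ and $H_\rmw$ coincide. Hence $\calP(B)$ and $\calP(B_\rmw)$ are the same set of measures, and the enstrophy bound \eqref{meanenstrophybound} does not involve the topology, so $\calP_\ens(B)=\calP_\ens(B_\rmw)$ as sets. Since $\calC_\rmb(B_\rmw)\subset\calC_\rmb(B)$, convergence of a net in $\calP_\ens(B)$ implies convergence in $\calP_\ens(B_\rmw)$. Thus the strong-based topology is finer, and it remains to prove the converse.

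\emph{Converse via Portmanteau.} Let $\mu_\alpha\wconv\mu$ in $\calP_\ens(B_\rmw)$; I will work with nets, to avoid assuming metrizability. All these measures are tight in $H$, and $H$ is completely regular, so by the Portmanteau Theorem \cite[Theorem 8.1]{Topsoe1970} it suffices to show
\[
\limsup_\alpha \mu_\alpha(C)\le \mu(C)
\]
for every strongly closed $C\subset H$. Fix $R>0$ and set $K_R=\{\bfu\in V;\ \|\bfnabla\bfu\|_{L^2}\le R\}$.

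The key observation is that $K_R$ is strongly compact in $H$. It is relatively compact by the compact embedding $V\subset H$. It is strongly closed because, if $\bfu_n\in K_R$ and $\bfu_n\to\bfu$ in $H$, then a subsequence converges weakly in $V$ to some limit, which must equal $\bfu$; weak lower semicontinuity of the $V$-norm then gives $\|\bfnabla\bfu\|_{L^2}\le R$.

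Consequently $C\cap K_R$ is strongly compact, hence weakly compact, hence weakly closed in $H_\rmw$ (which is Hausdorff). Using \cref{lemPensuniftightinV}, we get
\[
\mu_\alpha(C)\le \mu_\alpha(C\cap K_R)+\frac{\|\bff\|_{V'}^2}{\nu^2R^2}.
\]
The Portmanteau Theorem in $\calP(H_\rmw)=\calP_\tau(H_\rmw)$ applies to the weakly closed set $C\cap K_R$ and gives $\limsup_\alpha\mu_\alpha(C\cap K_R)\le\mu(C\cap K_R)\le\mu(C)$. Letting $R\to\infty$ yields the required inequality, so $\mu_\alpha\wconv\mu$ in $\calP(H)$. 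By the footnote on subspaces, this is the same as convergence in $\calP(B)$, and the closedness of $B$ gives the passage to $\calP_\ens(B)$.

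\emph{Main obstacle.} A direct approach would take $\varphi\in\calC_\rmb(B)$, restrict it to the compact set $K_R\cap B$ (on which the weak and strong topologies agree), and extend it to a weakly continuous function via Tietze. That would require normality of $B_\rmw$, which can fail when $B$ is unbounded. The closed-set form of Portmanteau sidesteps this issue. The only delicate points left are verifying that $K_R$ is strongly closed and keeping track of where the measures are carried, both of which were handled above.
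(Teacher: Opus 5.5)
Your proof is correct, but it takes a different route from the paper's.

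\textbf{The paper's argument.} It works with sequences. If $\mu_n\wconv\mu$ in $\calP_\ens(B_\rmw)$, then \cref{lemPensuniftightinV} and the compactness of $V$-balls in $H$ make $\{\mu_n\}$ uniformly tight in the Polish space $B$. Prokhorov's Theorem then gives a subsequence converging in $\calP(B)$ to some $\tilde\mu$. That subsequence also converges in the Hausdorff space $\calP(B_\rmw)$, so $\tilde\mu=\mu$, and a sub-subsequence argument yields convergence of the whole sequence in $\calP(B)$.

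\textbf{Your argument.} You instead transfer the closed-set Portmanteau inequality directly from $H_\rmw$ to $H$. For strongly closed $C$, the set $C\cap K_R$ is strongly compact, hence weakly closed, and the tail mass outside $K_R$ is bounded uniformly in $\alpha$.

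\textbf{What each approach buys.}
\begin{itemize}
\item Your version avoids Prokhorov and the identification of limits, and it works verbatim for nets. It therefore proves equality of the topologies even when $B$ is unbounded and $\calP(B_\rmw)$ need not be metrizable.
\item The paper's proof, as written, establishes the sequential statement in the lemma's ``in other words'' clause. That is all it later needs, since $B$ is then weakly compact and $\calP(B_\rmw)$ is metrizable. Its argument could also be run with subnets, because the family is relatively compact in the metrizable space $\calP(B)$.
\item You check explicitly that $K_R$ is strongly closed, which the paper takes for granted. Your argument also only needs $B$ to be Borel, not closed.
\item The paper's route is shorter once the standard compactness tools are available.
\end{itemize}

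\textbf{A correction to your closing remark.} The Tietze route is not actually obstructed. $H_\rmw$ is $\sigma$-compact (a countable union of weakly compact balls) and completely regular. It is therefore regular Lindel\"of, hence normal. So the restriction of any $\varphi\in\calC_\rmb(B)$ to the weakly compact set $K_R\cap B$, on which the weak and strong topologies agree, extends to a function in $\calC_\rmb(H_\rmw)$ with the same sup bound. This is essentially the device the paper uses in \cref{lemgvwGcontinuous}. None of this affects the validity of your proof.
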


\begin{proof}
    Since $\calC_\rmb(B_\rmw) \subset \calC_\rmb(B),$ we immediately have the continuous inclusion $\calP_\ens(B) \subset \calP_\ens(B_\rmw).$ Now, suppose $\mu_n \wconv \mu$ in $\calP_\ens(B_\rmw).$ From \cref{lemPensuniftightinV} and the fact that, for each $R > 0$, the ball $B_V(R)$ is a compact subset of $H$, we see that $\{\mu_n\}_n$ is uniformly tight in $\calP(B).$ Since $B$ is a closed subset of $H$, then it is a Polish space, and it follows by Prokhorov's Theorem that $\{\mu_n\}_n$ has a convergent subsequence $\{\mu_{n_k}\}_k$, namely $\mu_{n_k} \wconv \nu$ in $\calP(B),$ for some $\nu\in\calP(B).$ Since $\calP(B)\subset \calP(B_\rmw)$ as well, we have $\mu_{n_k} \wconv \nu$ in $\calP(B_\rmw).$ From the fact that $\calP(B_\rmw)$ is a Hausdorff space, the limit is unique and we have $\nu = \mu.$ Thus, $\mu_{n_k} \wconv \mu$ in both $\calP(B_\rmw)$ and $\calP(B).$ Since this must happen to any convergent subsequence, then in fact the whole sequence converges to $\mu$ in $\calP(B).$ In other words, $\calP_\ens(B_\rmw) \subset \calP_\ens(B).$ This completes the proof. 
\end{proof}

The mean energy dissipation inequalities \eqref{meandissipationineqs} also imply that these measures have bounded support in $H$:

\begin{lemma}
    \label{lemmusupportedinH}
    Let $\mu\in \calP_\fp(H)$. Then $\mu$ is carried by the ball $B_H(\radius_0)$ in $H$, i.e.
    \begin{equation}
        \label{eqmuenergybound}
        \mu(H \setminus B_H(\radius_0)) = 0,
    \end{equation}
    where $\radius_0$ is given by \eqref{defrho0}.
\end{lemma}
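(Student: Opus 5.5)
The plan is to exploit the freedom in choosing $\psi$ in the mean energy dissipation inequalities \eqref{meandissipationineqs}. I will take $\psi$ that is zero up to a threshold and increasing afterwards, so that the inequality only sees the part of $\mu$ outside a ball. On that region the integrand $\nu\|\bfnabla\bfu\|_{L^2}^2 - \dual{\bff,\bfu}_{V',V}$ will be shown to be strictly positive, which forces that region to have measure zero.

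First I bound the integrand from below pointwise. For $\bfu\in V$, Young's inequality gives
\[
\dual{\bff,\bfu}_{V',V} \le \|\bff\|_{V'}\|\bfnabla\bfu\|_{L^2} \le \frac{1}{2\nu}\|\bff\|_{V'}^2 + \frac{\nu}{2}\|\bfnabla\bfu\|_{L^2}^2 .
\]
Combining this with the Poincaré inequality $\|\bfnabla\bfu\|_{L^2}^2 \ge \lambda_1\|\bfu\|_{L^2}^2$ yields
\[
\ell(\bfu) \ge \frac{\nu\lambda_1}{2}\left(\|\bfu\|_{L^2}^2 - \radius_0^2\right).
\]
Hence $\ell(\bfu) > 0$ whenever $\|\bfu\|_{L^2} > \radius_0$. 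The function $\ell$ is only defined on $V$, but this is harmless: by \eqref{finitemeanenstrophy}, $\mu$ is carried by $V$.

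Next, fix $\radius > \radius_0$ and choose a $C^1$, nonnegative, nondecreasing $\psi$ with bounded derivative such that $\psi' = 0$ on $[0,\radius_0^2]$, $\psi' > 0$ on $(\radius_0^2,\infty)$, and $\psi' = 1$ on $[\radius^2,\infty)$. A smooth ramp does this. Then \eqref{meandissipationineqs} becomes
\[
\int_{\{\|\bfu\|_{L^2}>\radius_0\}} \psi'(\|\bfu\|_{L^2}^2)\,\ell(\bfu)\,\rmd\mu(\bfu) \le 0 .
\]
The integrand is nonnegative everywhere: it vanishes inside the ball because $\psi'=0$ there, and it is positive outside the ball because both factors are positive. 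Integrability also needs checking. The ramp has bounded derivative, and $|\ell(\bfu)| \le \nu\|\bfnabla\bfu\|_{L^2}^2 + \|\bff\|_{V'}\|\bfnabla\bfu\|_{L^2}$, which is $\mu$-integrable by \eqref{finitemeanenstrophy}. Since the integrand is nonnegative and its integral is at most zero, it vanishes $\mu$-a.e. It is strictly positive on $\{\|\bfu\|_{L^2}>\radius_0\}$, so that set has $\mu$-measure zero, which is \eqref{eqmuenergybound}. A single $\psi$ with $\psi'>0$ on the whole open region suffices, so no limit over $\radius$ is needed.

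The main obstacle is the choice of test function. A nondecreasing $\psi$ whose derivative is only $C^0$ with a kink would violate the $C^1$ requirement on $\psi$. The answer is that only $\psi'$ needs to be continuous, so a continuous $\psi'$ rising from $0$ at $\radius_0^2$ and bounded by $1$ is admissible, for example $\psi'(s) = \min\{1,(s-\radius_0^2)_+\}$. The one remaining point to verify is that this $\psi'$ is strictly positive for every $s>\radius_0^2$, which it is.
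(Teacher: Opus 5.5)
Your proof is correct and follows essentially the same route as the paper. You choose an admissible $\psi$ with $\psi'=0$ on $[0,\radius_0^2]$ and $\psi'>0$ on $(\radius_0^2,\infty)$, use the Poincar\'e inequality to get $\ell(\bfu)>0$ on $V\setminus B_H(\radius_0)$, and conclude from the nonnegative integrand that $\mu(H\setminus B_H(\radius_0))=0$. The differences are cosmetic: the paper takes the smooth choice $\psi(s)=e^{-1/(s-\radius_0^2)}$ for $s>\radius_0^2$ and the factorization $\nu\|\bfnabla\bfu\|_{L^2}\bigl(\|\bfnabla\bfu\|_{L^2}-\|\bff\|_{V'}/\nu\bigr)$, where you use a ramp for $\psi'$ and Young's inequality.
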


\begin{proof}
    Consider the nonnegative, non-decreasing, smooth function with bounded derivative given by
    \[
        \psi(s) = \begin{cases}
            0, & s \leq \radius_0^2, \\
            e^{-1/(s - \radius_0^2)}, & s > \radius_0^2.
        \end{cases}
    \]
From the mean energy dissipation inequality and using that the derivative $\psi'(s)$ vanishes for $s \leq \radius_0^2$, we obtain
    \begin{multline}
        \label{eqformusupportedinH}
        \int_{H \setminus B_H(\radius_0)} \psi'(\|\bfu\|_{L^2}^2) \left\{ \nu \|\bfnabla\bfu\|_{L^2}^2 - \dual{\bff, \bfu}_{V', V} \right\} \;\rmd\mu(\bfu) \\
        = \int_H \psi'(\|\bfu\|_{L^2}^2) \left\{ \nu \|\bfnabla\bfu\|_{L^2}^2 - \dual{\bff, \bfu}_{V', V} \right\} \;\rmd\mu(\bfu) \leq 0.
    \end{multline}
    On the other hand, $\psi'(\| \cdot \|_{L^2}^2)$ is strictly positive on the complement $H \setminus B_H(\radius_0)$ and the second factor from the integrand in \eqref{eqformusupportedinH} is strictly positive on $V \setminus B_H(\radius_0)$. Indeed, using the duality norm, the Poincar\'e inequality and the definition of $\radius_0,$
    \begin{align*} 
        \nu \|\bfnabla\bfu\|_{L^2}^2 - \dual{\bff, \bfu}_{V', V} & \geq \nu \|\bfnabla\bfu\|_{L^2}^2 - \|\bff\|_{V'} \|\bfnabla\bfu\|_{L^2} \\
        & = \nu \|\bfnabla\bfu\|_{L^2} \left( \|\bfnabla\bfu\|_{L^2} - \frac{\|\bff\|_{V'}}{\nu} \right) \\
        & \geq \nu \|\bfnabla\bfu\|_{L^2} \left( \lambda_1^{1/2}\|\bfu\|_{L^2} - \frac{\|\bff\|_{V'}}{\nu} \right) \\
        & > \nu \|\bfnabla\bfu\|_{L^2} \left( \lambda_1^{1/2}\radius_0 - \frac{\|\bff\|_{V'}}{\nu} \right) \\
        & = 0,
    \end{align*}
    for all $\bfu \in V \setminus B_H(\radius_0).$ Since the measure $\mu$ is carried by $V$, we have the integrand in the left hand side of \eqref{eqformusupportedinH} strictly positive, $\mu$-almost surely on the complement $H \setminus B_H(\radius_0),$ with the corresponding integral being nonpositive. This implies that the $\mu$-measure of this complement $H \setminus B_H(\radius_0)$ must be zero, which means that $\mu$ is carried by the ball $B_H(\radius_0),$ completing the proof.
\end{proof} 

\subsection{Continuity results}

We start with continuity results for functions that appear as integrands within the proof of the minimax formula.
\begin{lemma}
    \label{lemFDPsiVweak}
    For any $\Psi\in\calT^\cyl,$ the functional
    \begin{equation}
        \label{eqdefgFDPsiVweak}
        g(\bfu) = \dual{\bfF(\bfu), \bfD\Psi(\bfu)}_{V', V}
    \end{equation}
    is well defined for $\bfu\in V$ and is continuous with respect to the weak topology of $V$. Moreover, it satisfies the bound
    \begin{equation}
        \label{eqboundonFDPsi}
        |g(\bfu)| \leq C_\Psi \left( \|\bff\|_{V'} + \nu \|\bfnabla \bfu\|_{L^2} + c_L\|\bfu\|_{L^2}^{1/2} \|\bfnabla\bfu\|_{L^2}^{3/2}\right),
    \end{equation}
    for all $\bfu\in V,$ for a constant $C_\Psi$ depending on $\Psi.$
\end{lemma}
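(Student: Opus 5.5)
We split $g$ according to \eqref{defF}:
\[
g(\bfu) = \dual{\bff, \bfD\Psi(\bfu)}_{V',V} - \nu\dual{A\bfu, \bfD\Psi(\bfu)}_{V',V} - b(\bfu,\bfu,\bfD\Psi(\bfu)),
\]
and treat each term separately. We write $\Psi(\bfu)=\psi(\dual{\bfu,\bfw_1},\ldots,\dual{\bfu,\bfw_m})$ with $\bfw_j\in V$ and $\psi\in\calC_c^1(\RR^m)$. By \eqref{eqfrechetPsi}, $\bfD\Psi(\bfu)=\sum_j \partial_j\psi(\cdots)\bfw_j$.

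\textbf{Well-definedness and bound.} Since $\partial_j\psi$ is bounded and $\bfw_j\in V$, the map $\bfD\Psi$ takes values in $V$ with
\[
\sup_\bfu \|\bfnabla\bfD\Psi(\bfu)\|_{L^2}\le C_\Psi := \sum_j\|\partial_j\psi\|_\infty\|\bfnabla\bfw_j\|_{L^2}.
\]
For $\bfu\in V$ we have $\bfF(\bfu)\in V'$, so $g$ is well defined. Moreover
\[
|g(\bfu)|\le \|\bfF(\bfu)\|_{V'}\,\|\bfnabla\bfD\Psi(\bfu)\|_{L^2}\le C_\Psi\|\bfF(\bfu)\|_{V'}.
\]
Bounding $\|\bff\|_{V'}$ directly, $\|\nu A\bfu\|_{V'}=\nu\|\bfnabla\bfu\|_{L^2}$, and $\|B(\bfu,\bfu)\|_{V'}$ by \eqref{eqboundBuuinVprime} gives \eqref{eqboundonFDPsi}.

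\textbf{Weak continuity.} Let $\bfu_\alpha\to\bfu$ in $\sigma(V,V')$. It suffices to work with sequences on bounded sets of $V$. That reduction must be handled with a little care, since the weak topology of $V$ is not metrizable globally. We will argue sequentially (equivalently, on bounded sets, where the weak topology is metrizable), which is what is used later because the measures concentrate on $V$-balls. So let $\bfu_n\wconv\bfu$ weakly in $V$. Then $\bfu_n$ is bounded in $V$, and by the compact embedding $\bfu_n\to\bfu$ strongly in $H$, hence in $L^p$ for $p<6$ by interpolation. The scalars $\dual{\bfu_n,\bfw_j}$ converge, so by continuity of $\partial_j\psi$ the coefficients converge, and $\bfD\Psi(\bfu_n)\to\bfD\Psi(\bfu)$ strongly in $V$ (it is a finite combination of the fixed $\bfw_j$). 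For the forcing term, $\bff$ is fixed, so that term converges. For the viscous term,
\[
\dual{A\bfu_n,\bfD\Psi(\bfu_n)}=(\bfnabla\bfu_n,\bfnabla\bfD\Psi(\bfu_n)),
\]
which is weak-times-strong convergence in $L^2$ and therefore converges.

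\textbf{Main obstacle: the trilinear term.} The difficulty is that it is quadratic in $\bfu_n$. We write
\[
b(\bfu_n,\bfu_n,\bfv_n)=-b(\bfu_n,\bfv_n,\bfu_n),\qquad \bfv_n=\bfD\Psi(\bfu_n),
\]
using antisymmetry for divergence-free fields in $V$. Then
\[
b(\bfu_n,\bfv_n,\bfu_n)=\sum_j \partial_j\psi(\cdots)\,b(\bfu_n,\bfw_j,\bfu_n),
\]
so it suffices that $b(\bfu_n,\bfw,\bfu_n)\to b(\bfu,\bfw,\bfu)$ for each fixed $\bfw\in V$. This holds because $\bfu_n\to\bfu$ strongly in $L^4$ (bounded in $L^6$, convergent in $L^2$) and $\bfnabla\bfw\in L^2$. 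By H\"older, $|b(\bfa,\bfw,\bfc)|\le\|\bfa\|_{L^4}\|\bfnabla\bfw\|_{L^2}\|\bfc\|_{L^4}$, so the difference is bounded by
\[
\|\bfnabla\bfw\|_{L^2}\bigl(\|\bfu_n-\bfu\|_{L^4}\|\bfu_n\|_{L^4}+\|\bfu\|_{L^4}\|\bfu_n-\bfu\|_{L^4}\bigr)\to0.
\]
Multiplying by the convergent bounded coefficients $\partial_j\psi(\cdots)$ concludes continuity.
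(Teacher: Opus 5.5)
Your proposal is correct and follows the same route as the paper. Both expand $\bfD\Psi(\bfu)$ as a finite combination of the fixed $\bfw_j$, use the compact embedding $V\subset H$ to pass to the limit term by term along weakly convergent sequences in $V$, and obtain \eqref{eqboundonFDPsi} from $|g(\bfu)|\le C_\Psi\|\bfF(\bfu)\|_{V'}$, with the same constant $C_\Psi=\sum_j\sup|\partial_j\psi|\,\|\bfnabla\bfw_j\|_{L^2}$ together with \eqref{eqboundBuuinVprime}.

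Your treatment of the trilinear term (antisymmetry plus strong $L^4$ convergence) fills in what the paper leaves as ``we see from this expression.'' Like the paper, what you actually establish is sequential weak continuity (equivalently, weak continuity on bounded subsets of $V$), which is exactly what \cref{lemgvwGcontinuous} uses. Your caveat is well placed, since the quadratic term is in general not continuous for the weak topology on all of $V$.
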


\begin{proof}
    Using \cref{eqfrechetPsi} and the definition of $\bfF$, we have
    \[
        g(\bfu) = \sum_{j=1}^m \partial_j \psi(\dual{\bfu,\bfw_1}_{V',V}, \ldots, \dual{\bfu,\bfw_m}_{V', V}) \dual{\bff - \nu A\bfu - B(\bfu, \bfu) , \bfw_j}_{V', V}.
    \]
    If $\bfu_n$ converges weakly to $\bfu$ in $V,$ then it also converges strongly in $H,$ and we see, from this expression, that $g(\bfu_n)\rightarrow g(\bfu).$

    For the estimate, since the partial derivatives $\partial_j \psi$ of $\psi$ are uniformly bounded, we have
    \[
        |g(\bfu)| \leq \sum_{j=1}^m C_{\psi, j}\left( \|\bff\|_{V'} + \nu \|\bfnabla \bfu\|_{L^2} + \|B(\bfu, \bfu)\|_{V'} \right) \|\bfnabla \bfw_j\|_{L^2},
    \]
    where $C_{\psi, j} = \sup_{s\in\RR}|\partial_j \psi(s)|$. Thus, using \cref{eqboundBuuinVprime}, we obtain \cref{eqboundonFDPsi} with
    \[
        C_\Psi = \sum_{j=1}^m C_{\psi, j} \|\bfnabla \bfw_j\|_{L^2}.
    \]
\end{proof}

Since any sequence converging weakly in $H$ and bounded in $V$ also converges weakly in $V,$ we have the following corollary.

\begin{corollary}
    When restricted to a bounded ball in $V,$ the functional $g=g(\bfu)$ defined in \cref{eqdefgFDPsiVweak} is continuous with respect to the weak topology of $H.$ 
\end{corollary}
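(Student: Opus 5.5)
The corollary reduces to the observation quoted just before it, combined with \cref{lemFDPsiVweak}. Fix $R>0$ and consider $g$ restricted to $B_V(R)=\{\bfu\in V;\ \|\bfnabla\bfu\|_{L^2}\le R\}$, with the topology induced by $H_\rmw$.

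The first point is that this topology is metrizable on $B_V(R)$. Since $B_V(R)$ is bounded in $H$ and $H$ is separable, the weak topology of $H$ restricted to bounded sets is metrizable. It is therefore enough to check sequential continuity. (Alternatively, one can argue with nets directly, as below.)

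Now take a sequence $\bfu_n\in B_V(R)$ with $\bfu_n\wconv\bfu$ weakly in $H$, and $\bfu\in B_V(R)$. We show that $\bfu_n\wconv\bfu$ in $\sigma(V,V')$ by a subsequence argument.
\begin{itemize}
\item Since $\{\bfu_n\}$ is bounded in the Hilbert space $V$, every subsequence has a further subsequence converging weakly in $V$ to some $\bfv$.
\item The inclusion $V\subset H$ is continuous, so weak convergence in $V$ implies weak convergence in $H$. Hence that further subsequence converges weakly in $H$ to $\bfv$.
\item Weak limits in $H$ are unique, so $\bfv=\bfu$.
\end{itemize}
Since every subsequence has a further subsequence converging weakly in $V$ to the same limit $\bfu$, the whole sequence converges to $\bfu$ in $\sigma(V,V')$. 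By \cref{lemFDPsiVweak}, $g$ is sequentially continuous with respect to the weak topology of $V$, so $g(\bfu_n)\to g(\bfu)$.

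For readers who prefer nets, the same argument works directly. The ball $B_V(R)$ is weakly compact in $V$, and the identity map from $(B_V(R),\sigma(V,V'))$ to $(B_V(R),H_\rmw)$ is a continuous bijection from a compact space onto a Hausdorff space. It is therefore a homeomorphism, so on $B_V(R)$ the weak topology of $H$ coincides with $\sigma(V,V')$.

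For this route one needs $g$ to be continuous, not just sequentially continuous, for $\sigma(V,V')$ on $B_V(R)$. This holds because $B_V(R)$ is $\sigma(V,V')$-metrizable, $V$ being separable.

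The only real obstacle is this sequential-versus-topological issue. It is handled by the metrizability of weak topologies on bounded subsets of separable Hilbert spaces. The rest of the argument is routine.
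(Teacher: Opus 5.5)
Your proof is correct and takes the same route as the paper, which deduces the corollary from \cref{lemFDPsiVweak} via the one-line observation that a sequence bounded in $V$ and weakly convergent in $H$ converges weakly in $V$; your subsequence argument proves exactly this. Your remarks on metrizability of the weak topologies on bounded sets, and the compact-to-Hausdorff homeomorphism, spell out the sequential-versus-topological point that the paper leaves implicit.
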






We now address the continuity of some integral functionals with respect to the measure.

\begin{lemma}
    \label{lemgvwGcontinuous}
    Let $B$ be a weakly compact subset of $H$. Suppose $g:V\cap B\rightarrow \RR$ is continuous in the weak topology of $V$ and is sub-quadratic with respect to the norm in $V$, in the sense that 
    \begin{equation}
        \label{subquadraticgrowth}
        |g(\bfu)| \leq C_0 + C_1\|\bfnabla \bfu\|_{L^2}^{2 - \delta}, \qquad \forall\,\bfu\in V \cap B, 
    \end{equation}
    for some $C_0, C_1 \geq 0$ and some $0 < \delta \leq 2.$ Then, the map
    \begin{equation}
        G(\mu) = \int_B g(\bfu) \;\rmd \mu(\bfu)
    \end{equation}
    is continuous for $\mu$ in $\calP_\ens(B),$ i.e. if $\mu_n\in \calP_\ens(B_\rmw)$ is such that $\mu_n \wconv \mu$ for some $\mu$ in $\calP_\ens(B_\rmw),$ then $G(\mu_n) \rightarrow G(\mu).$
\end{lemma}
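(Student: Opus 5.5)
The plan is a truncation argument that combines the uniform tightness of \cref{lemPensuniftightinV} with the uniform integrability coming from the sub-quadratic growth \eqref{subquadraticgrowth}. Since $B$ is weakly compact, $\calP(B_\rmw)$ is metrizable, so sequences suffice. Fix $\mu_n\wconv\mu$ in $\calP_\ens(B_\rmw)$. All these measures are carried by $V\cap B$, because they have finite mean enstrophy, and they satisfy the uniform bound \eqref{meanenstrophybound}.

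\textbf{Step 1 (continuity of $g$ on $V$-balls).} On $K_R = \{\bfu\in V\cap B;\ \|\bfnabla\bfu\|_{L^2}\le R\}$, a sequence converging weakly in $H$ converges weakly in $V$, since it is bounded in $V$ and the weak limit is unique. Hence $g|_{K_R}$ is continuous for the weak topology of $H$. Moreover, $K_R$ is closed in $B_\rmw$: the norm of $V$ is weakly lower semicontinuous, extended by $+\infty$ off $V$. Since $B_\rmw$ is compact metrizable, hence normal, Tietze's theorem lets us extend $g|_{K_R}$ to a function $g_R\in\calC_\rmb(B_\rmw)$ with $\sup|g_R|\le\sup_{K_R}|g|\le C_0+C_1R^{2-\delta}$.

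\textbf{Step 2 (splitting).} Write
\[
G(\mu_n)-G(\mu)=\int g_R\,\rmd(\mu_n-\mu)+\int (g-g_R)\,\rmd\mu_n-\int (g-g_R)\,\rmd\mu .
\]
The first term tends to $0$ by weak-star convergence in $\calP(B_\rmw)$. For the tails, $g-g_R$ vanishes on $K_R$. On the complement $\{\|\bfnabla\bfu\|_{L^2}>R\}$ we have
\[
|g-g_R|\le 2C_0+2C_1\|\bfnabla\bfu\|_{L^2}^{2-\delta}\le \bigl(2C_0R^{-2}+2C_1R^{-\delta}\bigr)\|\bfnabla\bfu\|_{L^2}^{2},
\]
using $R^{2-\delta}\le\|\bfnabla\bfu\|_{L^2}^{2-\delta}$ there and assuming $R\ge1$. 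Integrating against any measure of $\calP_\ens$ and using \eqref{meanenstrophybound} bounds each tail by $(2C_0R^{-2}+2C_1R^{-\delta})\|\bff\|_{V'}^2/\nu^2$, uniformly in $n$. Letting $n\to\infty$ and then $R\to\infty$ gives $G(\mu_n)\to G(\mu)$. This also shows that $G$ is well defined, since $g$ is $\mu$-integrable. For $\delta=2$ the bound is just $2C_0$ times the tightness estimate \eqref{ineq:mu:BVRc}.

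\textbf{Main obstacle.} The delicate point is Step 1: producing a bounded weakly continuous test function that agrees with $g$ on the large set $K_R$. This requires checking that $K_R$ is weakly closed and that $g|_{K_R}$ is continuous for $H_\rmw$. Alternatively, one could invoke \cref{lemPensBequalsPensBw} and work with strong convergence in $\calP(B)$, but Tietze on the compact metrizable $B_\rmw$ appears cleanest.
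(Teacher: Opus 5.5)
Your proposal is correct and follows essentially the same route as the paper: a Tietze extension of $g$ restricted to $B_V(R)\cap B$ to a bounded weakly continuous function on $B_\rmw$, then a three-term splitting. In that splitting, the tails are controlled uniformly in $n$ by $2(C_0R^{-2}+C_1R^{-\delta})\|\bff\|_{V'}^2/\nu^2$ via the mean-enstrophy bound, and you finish by sending $n\to\infty$ and then $R\to\infty$; the assumption $R\ge 1$ is unnecessary, since $\|\bfnabla\bfu\|_{L^2}>R$ already gives both tail inequalities.
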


\begin{proof} 
Since $g:V\cap B\rightarrow\RR$ is continuous with respect to the weak topology of $V$ and $V$ is a Borel subset of $H$, the functional $g$ is a Borel map on $V\cap B$ with respect to the weak topology of $H$. Since any measure in $\calP_\ens(B_\rmw)$ is carried by $V,$ then $g$ is $\mu$-measurable, for any $\mu$ in $\calP_\ens(B_\rmw)$. Since any measure in $\calP_\ens(B_\rmw)$ has finite mean enstrophy \cref{meanenstrophybound}, the bound on $g$ implies that $g$ is also $\mu$-integrable, for any $\mu$ in $\calP_\ens(B_\rmw)$.

 Consider now a sequence $\mu_n\wconv \mu$ in $\calP_\ens(B_\rmw)$. Let $R > 0$ be arbitrary and denote by $g_R = g|_{B_V(R)\cap B}$ the restriction of $g$ to $B_V(R)\cap B$. Observe that, since $B_V(R)\cap B$ is compact in $B_\rmw$ and $g$ is continuous on $B_V(R)\cap B$ with respect to the weak topology of $V$, it follows that $g_R$ is continuous also in the weak topology of $H.$\footnote{Otherwise, if $\bfu_n\rightarrow \bfu$ converges in $B_\rmw$ but $g(\bfu_n)$ does not converge to $g(\bfu)$, we could find a subsequence $\bfu_{n_k}$ for which $g(\bfu_{n_k})$ is away from $g(\bfu)$, but then the compactness of $B_V(R)\cap B$ in the weak topology of $V$ would imply that a further subsequence $\bfu_{n_{k_j}}$ would converge to $\bfu$ in the weak topology of $V$, meaning, by the continuity of $g$ in this topology, that $g(\bfu_{n_{k_j}})$ would converge to $g(\bfu)$.}  Since $B_V(R)\cap B$ is compact also in the weak topology of $H$ and that $B_\rmw$ is normal, we apply the Tietze Extension Theorem \cite{DunSch1958,Munkres2000} to find a bounded, weakly continuous function $h_R$ on $B$ which extends $g_R$ from $B_V(R)\cap B$ to $B.$ Moreover, the extension $h_R$ can be chosen such that
    \begin{multline*}
        \max_{\bfu\in B} |h_R(\bfu)| = \max_{\bfu\in B_V(R) \cap B} |g_R(\bfu)| \\
        \leq \sup_{\bfu\in B_V(R) \cap B} \{C_0 + C_1 \|\bfnabla \bfu\|_{L^2}^{2 - \delta}\} = C_0 + C_1 R^{2 - \delta}. 
    \end{multline*}
    Then, we write
    \[  G(\mu_n) = \int_B g(\bfu) \;\rmd \mu_n(\bfu) = \int_B (g(\bfu) - h_R(\bfu))\;\rmd \mu_n(\bfu) + \int_B h_R(\bfu) \;\rmd \mu_n(\bfu).
    \]
    Since $h_R$ is an extension of $g_R$, the difference $g - h_R$ vanishes on $B_V(R)\cap B$, thus we have, using the boundedness of  $h_R$ and the sub-quadratic property of $g$ that
    \begin{align*}
        \bigg|\int_B (g(\bfu) - & h_R(\bfu)) \;\rmd \mu_n(\bfu)\bigg| = \left|\int_{B \setminus B_V(R)}  (g(\bfu) - h_R(\bfu))\;\rmd \mu_n(\bfu)\right| \\
          & \leq \int_{B \setminus B_V(R)}  \left\{ 2C_0 + C_1 \|\bfnabla\bfu\|_{L^2}^{2 - \delta} + C_1 R^{2 - \delta} \right\}\;\rmd \mu_n(\bfu) \\
          & \leq (2C_0 + C_1 R^{2-\delta})\mu_n(B \setminus B_V(R)) + C_1 \int_{B \setminus B_V(R)} \|\bfnabla\bfu\|_{L^2}^{2 - \delta} \;\rmd\mu_n(\bfu) \\
          & \leq \left(\frac{2C_0 + C_1R^{2-\delta}}{R^2} + \frac{C_1}{R^\delta}\right)\int_B \|\bfnabla \bfu\|_{L^2}^2\;\rmd\mu_n(\bfu),
    \end{align*}
  where in the last inequality we estimated $\mu_n(B \setminus B_V(R))$ using \eqref{ineq:mu:BVRc}.
   
    Using \eqref{meanenstrophybound}, we arrive at
    \[
        \bigg|\int_B (g(\bfu) - h_R(\bfu)) \;\rmd \mu_n(\bfu)\bigg| \leq 2\left(\frac{C_0}{R^2} + \frac{C_1}{R^\delta}\right)\frac{\|\bff\|_{V'}^2}{\nu^2}.
    \]
    This holds for every $n\in\NN.$ Similarly, the same estimate holds for the limit measure $\mu\in \calP_\ens(B_\rmw).$ With that, we have
    \begin{align*}
        \left| G(\mu_n) - G(\mu) \right| & = \bigg| \int_B (g(\bfu) - h_R(\bfu))\;\rmd \mu_n(\bfu) \\
        & \qquad + \int_B h_R(\bfu) \;\rmd \mu_n(\bfu) - \int_B h_R(\bfu) \;\rmd \mu(\bfu) \\
        & \qquad + \int_B (h_R(\bfu) - g(\bfu))\;\rmd \mu(\bfu) \bigg| \\
        & \leq  4\left(\frac{C_0}{R^2} + \frac{C_1}{R^\delta}\right)\frac{\|\bff\|_{V'}^2}{\nu^2} \\
        & \qquad + \left|\int_B h_R(\bfu) \;\rmd \mu_n(\bfu) - \int_B h_R(\bfu) \;\rmd \mu(\bfu)\right|.
    \end{align*}
    Since the extension $h_R$ is bounded and weakly continuous in $H$, the convergence $\mu_n \wconv \mu$ in $\calP_\ens(B_\rmw)$ implies that
    \[
        \int_B h_R(\bfu)\;\rmd\mu_n(\bfu) \rightarrow \int_B h_R(\bfu)\;\rmd\mu(\bfu), \qquad n\rightarrow \infty.
    \]
    Thus, we have
    \[
        \limsup_{n\rightarrow \infty} \left| G(\mu_n) - G(\mu) \right| \leq  4\left(\frac{C_0}{R^2} + \frac{C_1}{R^\delta}\right)\frac{\|\bff\|_{V'}^2}{\nu^2}.
    \]
    Since $R > 0$ is arbitrary, the right hand side is arbitrarily small, and we find that
    \[
        \limsup_{n\rightarrow \infty} \left| G(\mu_n) - G(\mu) \right| = 0,
    \]
    meaning that $G(\mu_n)$ converges to $G(\mu),$ proving the continuity of $G$ on the space $\calP_\ens(B_\rmw)$.
\end{proof}

\begin{corollary}
    \label{corgvwGcontinuous}
    Let $B$ be a weakly compact subset of $H$. For any given $\phi: V\cap B\rightarrow \RR$ which is weakly-continuous in $V$ and has sub-quadratic growth on $V$ (as in \cref{lemgvwGcontinuous}), the map
    \begin{equation}
        G(\mu, \Psi) = \int_B \{\phi(\bfu) + \dual{\bfF(\bfu), \bfD\Psi(\bfu)}_{V', V}\} \;\rmd\mu(\bfu)
    \end{equation}
    is continuous in $\mu\in \calP_\ens(B_\rmw)$, for every fixed $\Psi\in\calT^\cyl.$
\end{corollary}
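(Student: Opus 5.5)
The plan is to split $G(\mu,\Psi)$ into its two integrands and apply \cref{lemgvwGcontinuous} to each one separately. Set $g_1 = \phi$ and $g_2(\bfu) = \dual{\bfF(\bfu), \bfD\Psi(\bfu)}_{V',V}$. Then $G(\mu,\Psi) = \int_B g_1\,\rmd\mu + \int_B g_2\,\rmd\mu$, which is well defined because every $\mu\in\calP_\ens(B_\rmw)$ is carried by $V$. A sum of two continuous maps is continuous, so it suffices to check the hypotheses of \cref{lemgvwGcontinuous} for $g_1$ and for $g_2$ restricted to $V\cap B$.

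For $g_1=\phi$, the assumption is exactly what the lemma requires: $\phi$ is continuous in the weak topology of $V$ and has sub-quadratic growth \eqref{subquadraticgrowth}. The lemma therefore applies directly.

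For $g_2$, continuity in the weak topology of $V$ is given by \cref{lemFDPsiVweak}. The main work is the growth bound. From \eqref{eqboundonFDPsi} we have
\[
|g_2(\bfu)| \leq C_\Psi\left(\|\bff\|_{V'} + \nu\|\bfnabla\bfu\|_{L^2} + c_L\|\bfu\|_{L^2}^{1/2}\|\bfnabla\bfu\|_{L^2}^{3/2}\right).
\]
Since $B$ is weakly compact, it is bounded in $H$, say $\|\bfu\|_{L^2}\le r$ on $B$. Next, absorb the linear term into the others: $\|\bfnabla\bfu\|_{L^2}\le 1+\|\bfnabla\bfu\|_{L^2}^{3/2}$. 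This gives
\[
|g_2(\bfu)| \le C_0 + C_1\|\bfnabla\bfu\|_{L^2}^{3/2}
\]
on $V\cap B$, with $C_0 = C_\Psi(\|\bff\|_{V'}+\nu)$ and $C_1 = C_\Psi(\nu + c_L r^{1/2})$. This is the sub-quadratic condition with $\delta = 1/2$. It is the key point of the argument: the nonlinear term grows only like $\|\bfnabla\bfu\|_{L^2}^{3/2}$ thanks to \eqref{eqboundBuuinVprime}, while the measures in $\calP_\ens$ control the second moment of $\|\bfnabla\bfu\|_{L^2}$.

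With both hypotheses verified, \cref{lemgvwGcontinuous} gives $\int_B g_i\,\rmd\mu_n\to\int_B g_i\,\rmd\mu$ for $i=1,2$ whenever $\mu_n\wconv\mu$ in $\calP_\ens(B_\rmw)$. Sequential continuity is enough here: the lemma is stated for sequences, and $\calP(B_\rmw)$ is metrizable because $B_\rmw$ is compact and metrizable. Summing the two limits concludes that $G(\cdot,\Psi)$ is continuous on $\calP_\ens(B_\rmw)$ for every fixed $\Psi\in\calT^\cyl$.
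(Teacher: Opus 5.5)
Your proposal is correct and follows essentially the paper's argument. The paper applies \cref{lemgvwGcontinuous} once, to the sum $\phi + \dual{\bfF, \bfD\Psi}_{V',V}$, using \cref{lemFDPsiVweak} and the boundedness of $B$ in $H$ for the second term; you apply the lemma to each summand separately and make the exponent $\delta = 1/2$ explicit, which is an inessential variation that merely avoids merging two sub-quadratic bounds with possibly different exponents.
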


\begin{proof}
    Just consider $g(\bfu) = \phi(\bfu) + \dual{\bfF(\bfu), \bfD\Psi(\bfu)}_{V', V}$ and apply \cref{lemgvwGcontinuous}, using the fact that $\bfu \mapsto \dual{\bfF(\bfu), \bfD\Psi(\bfu)}_{V', V}$ is also weakly continuous in $V$ with sub-quadratic growth on $V$, which follows from \cref{lemFDPsiVweak} and the fact that $B$ is bounded in $H$.
\end{proof}


We also need continuity with respect to the test functional.

\begin{lemma}
    \label{lemgvwGcontinuousinPsi}
    For any fixed $\mu\in \calP_\ens(H_\rmw)$ and any $\phi: H\rightarrow \RR$ which is $\mu$-integrable, the map
    \begin{equation}
        G(\mu, \Psi) = \int_H \{\phi(\bfu) + \dual{\bfF(\bfu), \bfD\Psi(\bfu)}_{V', V}\} \;\rmd\mu(\bfu)
    \end{equation}
    is continuous in $\Psi\in\calT^\cyl.$ 
\end{lemma}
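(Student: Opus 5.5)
Since the map $\Psi\mapsto G(\mu,\Psi)$ is affine, with the $\phi$-term independent of $\Psi$, the plan is to prove a Lipschitz estimate
\[
    |G(\mu,\Psi_1) - G(\mu,\Psi_2)| \leq C_\mu \|\Psi_1 - \Psi_2\|_{\calT^\cyl}
\]
for some finite constant $C_\mu$ depending only on $\mu$, $\nu$ and $\bff$.

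First, $\calT^\cyl$ is a vector space and $\bfD$ is linear on it, so $\bfD\Psi_1 - \bfD\Psi_2 = \bfD(\Psi_1-\Psi_2)$. It therefore suffices to bound $\bigl|\int_H \dual{\bfF(\bfu), \bfD\Psi(\bfu)}_{V',V}\,\rmd\mu(\bfu)\bigr|$ by $C_\mu \|\Psi\|_{\calT^\cyl}$ for an arbitrary $\Psi\in\calT^\cyl$.

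For $\bfu\in V$, duality gives
\[
    |\dual{\bfF(\bfu),\bfD\Psi(\bfu)}_{V',V}| \leq \|\bfF(\bfu)\|_{V'}\, \|\bfnabla \bfD\Psi(\bfu)\|_{L^2} \leq \|\Psi\|_{\calT^\cyl}\, \|\bfF(\bfu)\|_{V'},
\]
using the second term in the norm \eqref{eqcalTcylmetric}. From \eqref{defF} and \eqref{eqboundBuuinVprime}, and using $\|A\bfu\|_{V'} = \|\bfnabla\bfu\|_{L^2}$, we get
\[
    \|\bfF(\bfu)\|_{V'} \leq \|\bff\|_{V'} + \nu\|\bfnabla\bfu\|_{L^2} + c_L\|\bfu\|_{L^2}^{1/2}\|\bfnabla\bfu\|_{L^2}^{3/2}.
\]
Since $\mu\in\calP_\ens(H_\rmw)$, $\mu$ is carried by $V$, so this pointwise bound holds $\mu$-a.e. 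It remains to check that the right-hand side is $\mu$-integrable.

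The first two terms are integrable by the mean enstrophy bound \eqref{meanenstrophybound} and Cauchy--Schwarz. The cubic-type term is the main obstacle, because $\calP_\ens$ alone gives no control on $\|\bfu\|_{L^2}$. If $\mu$ is carried by a bounded set, for instance $B_H(\radius_0)$ as for $\calP_\fp$ measures by \cref{lemmusupportedinH}, or any bounded $B$ as used in the main argument, then $\|\bfu\|_{L^2}^{1/2}\leq \radius^{1/2}$ and $\|\bfnabla\bfu\|_{L^2}^{3/2}$ is integrable by H\"older against \eqref{meanenstrophybound}. Without a bounded support, I would instead use Poincar\'e, $\|\bfu\|_{L^2}\leq\lambda_1^{-1/2}\|\bfnabla\bfu\|_{L^2}$, which bounds the term by $\|\bfnabla\bfu\|_{L^2}^2$ times a constant, and that is integrable by \eqref{meanenstrophybound}. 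This Poincar\'e route works for every $\mu\in\calP_\ens(H_\rmw)$, so I would use it.

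This yields
\[
    C_\mu = \|\bff\|_{V'} + \nu\,\frac{\|\bff\|_{V'}}{\nu} + \frac{c_L}{\lambda_1^{1/4}}\,\frac{\|\bff\|_{V'}^2}{\nu^2} < \infty.
\]
The $\phi$-term cancels in the difference, and its integrability is assumed, so $G(\mu,\cdot)$ is well defined. The estimate shows it is Lipschitz, hence continuous, on $\calT^\cyl$. Measurability of the integrand follows from \cref{lemFDPsiVweak}, since $g$ is weakly continuous on $V$ and $V$ is a Borel subset of $H$.
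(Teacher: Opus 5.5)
Your proposal is correct and follows the paper's proof essentially step for step: the duality estimate with the $\sup_{\bfu\in V}\|\bfnabla\bfD\Psi(\bfu)\|_{L^2}$ part of the norm \eqref{eqcalTcylmetric}, the bound on $\|\bfF(\bfu)\|_{V'}$ via \eqref{eqboundBuuinVprime}, Poincar\'e to absorb the factor $\|\bfu\|_{L^2}^{1/2}$, and the mean-enstrophy bound \eqref{meanenstrophybound}. You also arrive at the same Lipschitz constant $2\|\bff\|_{V'} + c_L\lambda_1^{-1/4}\|\bff\|_{V'}^2/\nu^2$.
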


\begin{proof}
Recall $\calT^\cyl$ is a vector space endowed with the norm \eqref{eqcalTcylmetric}. Let $\Psi_1, \Psi_2 \in \calT^\cyl$. Since $\mu$ is carried by $V$, the function $\bfu\mapsto \dual{\bfF(\bfu), \bfD\Psi_1(\bfu)-\bfD\Psi_2(\bfu)}_{V', V}$ is also $\mu$-integrable and we have
\[
    |G(\mu, \Psi_1)-G(\mu, \Psi_2)| \leq \int_V | \dual{\bfF(\bfu), \bfD\Psi_1(\bfu)-\bfD\Psi_2(\bfu)}_{V', V} | \;\rmd\mu(\bfu).
\]
Then,
\begin{align*}
    |G(\mu, \Psi_1)-& G(\mu, \Psi_2)| \leq \int_V \|\bfF(\bfu)\|_{V'} \|\bfD\Psi_1(\bfu)-\bfD\Psi_2(\bfu)\|_V \;\rmd\mu(\bfu) \\
    & \leq \sup_{\bfv \in V} \|\bfD\Psi_1(\bfv)-\bfD\Psi_2(\bfv)\|_V \int_V \|\bfF(\bfu)\|_{V'} \;\rmd\mu(\bfu) \\
    & \leq \|\Psi_1-\Psi_2\|_{\calT^\cyl} \int_V\{\|\bff\|_{V'}+\nu\|\bfnabla \bfu\|_{L^2}+c_L\|\bfu\|_{L^2}^{1/2}\|\bfnabla\bfu\|_{L^2}^{3/2}\}\;\rmd\mu(\bfu).
\end{align*}
Then, using the Poincar\'e inequality and the bound \eqref{meanenstrophybound}, we find that
\begin{multline*}
    |G(\mu, \Psi_1) - G(\mu, \Psi_2)| \\ 
    \leq  \|\Psi_1-\Psi_2\|_{\calT^\cyl} \int_V\{\|\bff\|_{V'}+\nu\|\bfnabla \bfu\|_{L^2}+\frac{c_L}{\lambda_1^{1/4}} \|\bfnabla\bfu\|_{L^2}^2\}\;\rmd\mu(\bfu) \\ \leq C\|\Psi_1-\Psi_2\|_{\calT^\cyl},
\end{multline*}
where
\[
    C = 2\|\bff\|_{V'} + \frac{c_L}{\lambda_1^{1/4}} \frac{\|\bff\|_{V'}^2}{\nu^2}.
\]
This proves the continuity of $G(\mu, \Psi)$ with respect to $\Psi.$
\end{proof}

\subsection{Topological results for the spaces of measures}

Here we prove some fundamental topological properties of the subspaces of measures defined in \cref{secmeasurespacessss}.

\begin{lemma}
    \label{lemPensHwclosedinPHw}
    The space $\calP_\ens(H_\rmw)$ is a closed and convex subset of the space $\calP(H_\rmw)$.
\end{lemma}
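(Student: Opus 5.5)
Convexity is immediate: the integral $\int_H \|\bfnabla \bfu\|_{L^2}^2\,\rmd\mu(\bfu)$, with the convention that the integrand is $+\infty$ on $H\setminus V$, is affine in $\mu$ on $\calP(H_\rmw)$. Hence if $\mu_1,\mu_2\in\calP_\ens(H_\rmw)$ and $0\le\theta\le1$, the integral for $(1-\theta)\mu_1+\theta\mu_2$ equals the corresponding convex combination of the two integrals. That combination is again bounded by $\|\bff\|_{V'}^2/\nu^2$. In particular the combined measure gives zero mass to $H\setminus V$, since both $\mu_i$ do.

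For closedness, I would write the enstrophy functional as a supremum of bounded, weakly lower semi-continuous functions. Since $\calP(H_\rmw)$ need not be sequential, I work with nets. Set
\[
    \varphi_{k,M}(\bfu) = \min\left\{\|\bfnabla P_k \bfu\|_{L^2}^2,\, M\right\}, \qquad k,M\in\NN .
\]
Each $P_k$ has finite rank, so $\bfu\mapsto P_k\bfu$ is continuous from $H_\rmw$ into a finite-dimensional space. Hence $\varphi_{k,M}$ is bounded and weakly continuous, i.e. $\varphi_{k,M}\in\calC_\rmb(H_\rmw)$. Moreover $\|\bfnabla P_k\bfu\|_{L^2}^2=\sum_{j\le k}\lambda_j|(\bfu,\bfw_j)|^2$, which increases to $\|\bfnabla\bfu\|_{L^2}^2$ as $k\to\infty$ for $\bfu\in V$, and to $+\infty$ for $\bfu\in H\setminus V$, using the spectral characterization $V=D(A^{1/2})$. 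Thus $\sup_{k,M}\varphi_{k,M}$ is the extended enstrophy functional.

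Now let $(\mu_\alpha)$ be a net in $\calP_\ens(H_\rmw)$ with $\mu_\alpha\wconv\mu$ in $\calP(H_\rmw)$. For fixed $k,M$, I get
\[
    \int_H\varphi_{k,M}\,\rmd\mu=\lim_\alpha\int_H\varphi_{k,M}\,\rmd\mu_\alpha\le \frac{\|\bff\|_{V'}^2}{\nu^2}.
\]
I then let $M\to\infty$ and $k\to\infty$, applying monotone convergence in each parameter. This gives $\int_H\|\bfnabla\bfu\|_{L^2}^2\,\rmd\mu\le\|\bff\|_{V'}^2/\nu^2$, where the integrand is read as $+\infty$ off $V$. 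Finiteness of the integral forces $\mu(H\setminus V)=0$, so $\mu\in\calP_\ens(H_\rmw)$.

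The only delicate point is the measurability and identification of the extended enstrophy on $H\setminus V$, together with the fact that $V$ is a Borel set. Both are handled by the monotone approximation: the limit $\sup_k\|\bfnabla P_k\bfu\|_{L^2}^2$ is Borel, finite exactly on $V$, and equal to the enstrophy there. Alternatively, one could invoke the Portmanteau characterization of $\calP(H_\rmw)=\calP_\tau(H_\rmw)$ with bounded lower semi-continuous functions $\min\{\|\bfnabla\bfu\|^2,M\}$. Weak lower semi-continuity of the $V$-norm on $H_\rmw$ would then need its own justification, and that justification is exactly the $P_k$ approximation above.
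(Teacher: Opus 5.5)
Your proof is correct. It has the same skeleton as the paper's: truncate the enstrophy, pass to the limit in the measures, remove the truncation by monotone convergence, and get convexity from linearity. The key limit step, however, is handled differently.

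The paper uses the single truncation $\min\{M,\|\bfnabla\bfu\|_{L^2}^2\}$, set equal to $M$ off $V$. This function is only lower semi-continuous on $H_\rmw$. The paper therefore has to invoke the Portmanteau equivalence between weak-star convergence and lower-semi-continuous weak-star convergence for tight measures on the completely regular space $H_\rmw$. It also proves the lower semi-continuity separately, in a footnote, by a subsequence argument.

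Your Galerkin truncations $\min\{\|\bfnabla P_k\bfu\|_{L^2}^2,M\}$ lie in $\calC_\rmb(H_\rmw)$, so you need only the definition of the weak-star topology. The price is a second monotone limit, plus the spectral description $V=D(A^{1/2})$ to identify the supremum with the enstrophy on $V$ and with $+\infty$ off $V$. That identification also settles that $V$ is Borel and that $\mu(H\setminus V)=0$. As a by-product, your construction shows directly that the extended enstrophy is weakly lower semi-continuous, being a supremum of weakly continuous functions; no sequential argument in the non-metrizable space $H_\rmw$ is needed.

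You also argue with nets, which gives genuine closedness in $\calP(H_\rmw)$. The paper's proof is written for sequences, so strictly it gives sequential closedness. That suffices for its later use inside the metrizable space $\calP(B_\rmw)$, and the paper's argument would go through for nets too, since the Portmanteau inequality holds for nets.
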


\begin{proof}
    Consider a sequence $\{\mu_n\}_n$ in $\calP_\ens(H_\rmw)$ converging to some measure $\mu$ in $\calP(H_\rmw)$. Since the weak-star convergence in $\calP(H_\rmw)$ is equivalent to the lower-semi-continuous weak-star convergence and, for any constant $M\in\RR,$ the map $\bfu \mapsto \min\{M, \|\bfnabla\bfu\|_{L^2}^2\}$ is a bounded lower-semi-continuous function on $H_\rmw$\footnote{Let $f(\bfu) = \min\{M, \|\bfnabla\bfu\|_{L^2}^2\},$ if $\bfu\in V$, and $f(\bfu) = M,$ if $\bfu\in H\setminus V.$ Suppose $\bfu_n$ converges to $\bfu$ weakly in $H$. If $\liminf_n f(\bfu_n) = M,$ then automatically $f(\bfu) \leq M \leq \liminf_n f(\bfu_n),$ since $f$ is bounded by $M$. If $L = \liminf_n f(\bfu_n) < M,$ then there is a subsequence $\bfu_{n_k}\in V$ with $\|\bfnabla\bfu_{n_k}\|_{L^2}^2 < M,$ for all $k$, and with $L=\lim_k f(\bfu_{n_k})$. This means there is a further subsequence $\bfu_{n_{k_j}}$ which converges weakly in $V$, which is necessarily $\bfu,$ by the uniqueness of the limit. Since $\bfu \mapsto \|\bfnabla \bfu\|_{L^2}^2$ is well-known to be weakly lower semi-continuous in $V$, we find that $f(\bfu) \leq \liminf_j \|\bfnabla\bfu_{n_{k_j}}\|_{L^2}^2 = \liminf_j f(\bfu_{n_{k_j}}) = \lim_k f(\bfu_{n_k}) = L = \liminf_n f(\bfu_n),$ proving that $f$ is weakly lower semi-continuous in $H$.}, we have that
    \begin{multline*}
        \int_H \min\{M, \|\bfnabla\bfu\|_{L^2}^2\} \;\rmd\mu(\bfu) \leq \liminf_{n\rightarrow \infty} \int_H \min\{M, \|\bfnabla\bfu\|_{L^2}^2\} \;\rmd\mu_n(\bfu) \\
        \leq \liminf_{n\rightarrow \infty} \int_H  \|\bfnabla\bfu\|_{L^2}^2 \;\rmd\mu_n(\bfu).
    \end{multline*}
    Using the mean enstrophy bound given in \eqref{meanenstrophybound} for $\mu_n\in\calP_\ens(B_\rmw)$, we obtain that
    \[
    \int_H \min\{M, \|\bfnabla\bfu\|_{L^2}^2\} \;\rmd\mu(\bfu) \leq \frac{\|\bff\|_{V'}^2}{\nu^2}.
    \]
    Since $M$ is arbitrary, we obtain, from the Monotone Convergence Theorem, that
    \[
	   \int_H \|\bfnabla\bfu\|_{L^2}^2 \;\rmd\mu(\bfu) \leq \frac{\|\bff\|_{V'}^2}{\nu^2},
    \]
    proving that $\mu$ satisfies \eqref{meanenstrophybound}. In particular, this means that $\mu\in\calP_\ens(H_\rmw).$

    The fact that $\calP_\ens(H_\rmw)$ is convex follows from the fact that $\calP(H_\rmw)$ is convex and the fact that the condition \eqref{meanenstrophybound} characterizing $\calP_\ens(H_\rmw)$ is linear in $\mu$.
\end{proof}

Since the topology in $\calP(H)$ is finer than that in $\calP(H_\rmw)$ and since $\calP_\ens(H)$ is equal to $\calP_\ens(H_\rmw)$ as a set and the latter is closed in $\calP(H_\rmw)$ as proved in the previous result, then $\calP_\ens(H)$ is a closed subset of $\calP(H)$. Our needs, however, require us to focus on the weak topology, instead.

\begin{lemma}
    \label{lemfpcompact}
    If $B$ is a weakly compact subset of $H$, then $\calP_\fp(B_\rmw)$ is a closed and convex subset of the compact space $\calP(B_\rmw)$, hence it is also compact.
\end{lemma}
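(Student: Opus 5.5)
Convexity is immediate: conditions \eqref{finitemeanenstrophy} and \eqref{meandissipationineqs} are linear in $\mu$, so any convex combination of two measures in $\calP_\fp(B_\rmw)$ satisfies them. Carrying by $B$ is also preserved. Compactness then follows from closedness, since $\calP(B_\rmw)$ is compact when $B_\rmw$ is compact (as recalled in \cref{secbackground}). Because $\calP(B_\rmw)$ is metrizable, it suffices to check closedness along sequences. So take $\mu_n\in\calP_\fp(B_\rmw)$ with $\mu_n\wconv\mu$ in $\calP(B_\rmw)$.

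\emph{Finite mean enstrophy.} By \cref{lemmuenstrophybound}, $\calP_\fp(B_\rmw)\subset\calP_\rmd(B_\rmw)\subset\calP_\ens(B_\rmw)$. Since $\calP(B_\rmw)$ sits inside $\calP(H_\rmw)$, \cref{lemPensHwclosedinPHw} gives $\mu\in\calP_\ens(B_\rmw)$, so $\mu$ satisfies \eqref{finitemeanenstrophy} with the uniform bound \eqref{meanenstrophybound}. Moreover, \cref{lemPensBequalsPensBw} shows that $\mu_n\wconv\mu$ also in $\calP(B)$, the strong topology, which is the more convenient one below.

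\emph{Mean energy dissipation inequalities.} Fix an admissible $\psi$ and split the integrand as
\[
\psi'(\|\bfu\|_{L^2}^2)\,\nu\|\bfnabla\bfu\|_{L^2}^2 \;-\; \psi'(\|\bfu\|_{L^2}^2)\dual{\bff,\bfu}_{V',V}.
\]
\begin{itemize}
\item The second term is $g(\bfu)=\psi'(\|\bfu\|^2)\dual{\bff,\bfu}$. On $V\cap B$ it is continuous for the weak topology of $V$, since weak convergence in $V$ gives strong convergence in $H$ and $\psi'$ is continuous. It has linear growth, $|g|\le C\|\bfnabla\bfu\|_{L^2}$, which is sub-quadratic. \cref{lemgvwGcontinuous} therefore gives convergence of its integrals.
\item The first term $h(\bfu)=\psi'(\|\bfu\|^2)\nu\|\bfnabla\bfu\|^2$ (set to $+\infty$ off $V$) is nonnegative. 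It is lower semicontinuous on $H$, being the product of a nonnegative continuous function and a nonnegative l.s.c. function. Truncating, $\min\{M,h\}$ is bounded and l.s.c. on $B$.
\end{itemize}
By the Portmanteau equivalence for tight measures recalled in \cref{secbackground}, valid in the strong topology of the Polish space $B$,
\[
\int\min\{M,h\}\,\rmd\mu\le\liminf_n\int h\,\rmd\mu_n .
\]
Letting $M\to\infty$ by monotone convergence gives $\int h\,\rmd\mu\le\liminf_n\int h\,\rmd\mu_n$. Hence
\[
\int(h-g)\,\rmd\mu\le\liminf_n\int(h-g)\,\rmd\mu_n\le0,
\]
which is \eqref{meandissipationineqs} for $\mu$.

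The main obstacle is the lower semicontinuity of the $\psi'$-weighted enstrophy. In the weak topology of $H$ the factor $\psi'(\|\bfu\|^2)$ is not continuous, since the norm is only weakly l.s.c. This is why I route through \cref{lemPensBequalsPensBw} to work with strong convergence in $H$, where $\|\bfu\|^2$ is continuous. If that step were unavailable, the fallback would be an l.s.c. argument in $H_\rmw$ restricted to $V$-balls, using weak $V$-compactness as in the footnote to \cref{lemPensHwclosedinPHw}. On bounded $V$-sets, weak $H$ convergence upgrades to strong $H$ convergence, which restores continuity of $\psi'(\|\bfu\|^2)$ there.
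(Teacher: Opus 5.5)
Your proof is correct and follows the paper's argument essentially step by step. Both use closedness of $\calP_\ens(H_\rmw)$ (with \cref{lemmuenstrophybound}) for the enstrophy condition, pass to the strong topology via \cref{lemPensBequalsPensBw}, handle the $\psi'$-weighted enstrophy by truncation, Portmanteau lower semicontinuity and monotone convergence, treat the forcing term with \cref{lemgvwGcontinuous} at $\delta=1$, and get convexity from linearity. The one small difference is that the paper truncates only the enstrophy factor, $\psi'(\|\bfu\|_{L^2}^2)\min\{M,\|\bfnabla\bfu\|_{L^2}^2\}$. This avoids the $0\cdot\infty$ convention that your ``$h=+\infty$ off $V$'' needs in order to stay lower semicontinuous where $\psi'=0$.
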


\begin{proof}    
    Consider a sequence $\{\mu_n\}_n$ in $\calP_\fp(B_\rmw)$ converging to some $\mu$ in $\calP(B_\rmw)$. Since $B$ is  weakly closed in $H$, we know that $\calP(B_\rmw)$ is a closed subset of $\calP(H_\rmw).$ We have also seen in \cref{lemPensHwclosedinPHw} that $\calP_\ens(H_\rmw)$ is a closed subset of $\calP(H_\rmw).$ Thus, we find that $\calP_\ens(B_\rmw) = \calP_\ens(H_\rmw)\cap\calP(B_\rmw)$ is a closed subset of $\calP(H_\rmw)$ and, in particular, a closed subset of $\calP(B_\rmw)$. Since the sequence $\{\mu_n\}_n$ is included in $\calP_\ens(B_\rmw)$, it follows from the closedness of $\calP_\ens(B_\rmw)$ that the limit measure $\mu$ belongs to $\calP_\ens(B_\rmw)$. This proves that $\mu$ satisfies \eqref{finitemeanenstrophy}.

    For verifying \eqref{meandissipationineqs}, let $\psi$ be a real-valued continuously-differentiable function which is nonnegative, non-decreasing and with bounded derivative. We start with the first term in the integrand in \eqref{meandissipationineqs}. For that, we use \cref{lemPensBequalsPensBw} and work with the weak convergence of measures with respect to the strong topology of $H.$ Since $\psi'$ is nonnegative and $V$ is compactly immersed in $H$, the function
    \[
        \bfu \mapsto \psi'(\|\bfu\|_{L^2}^2) \min\{M, \|\bfnabla\bfu\|_{L^2}^2\}
    \]
    is bounded and strongly lower semi-continuous in $H$. Thus, we obtain, in this case, that
    \begin{multline}
        \label{eqliminfpsiprimenablau}
        \int_H \psi'(\|\bfu\|_{L^2}^2) \min\{M, \|\bfnabla\bfu\|_{L^2}^2\} \;\rmd\mu(\bfu) \\
        \leq \liminf_{n\rightarrow \infty} \int_H \psi'(\|\bfu\|_{L^2}^2) \min\{M, \|\bfnabla\bfu\|_{L^2}^2\} \;\rmd\mu_n(\bfu).
    \end{multline}
    Since $\psi'$ is nonnegative, the integrand on the right hand side of \eqref{eqliminfpsiprimenablau} is non-decreasing in $M$, so that
    \[
        \int_H \psi'(\|\bfu\|_{L^2}^2) \min\{M, \|\bfnabla\bfu\|_{L^2}^2\} \;\rmd\mu(\bfu) \leq \liminf_{n\rightarrow \infty} \int_H \psi'(\|\bfu\|_{L^2}^2) \|\bfnabla\bfu\|_{L^2}^2 \;\rmd\mu_n(\bfu).
    \]
    Taking $M\rightarrow \infty$ and applying the Monotone Convergence Theorem, we obtain
    \begin{equation}
        \label{eqliminfpsiprimenablau2}
        \int_H \psi'(\|\bfu\|_{L^2}^2) \|\bfnabla\bfu\|_{L^2}^2 \;\rmd\mu(\bfu) \leq \liminf_{n\rightarrow \infty} \int_H \psi'(\|\bfu\|_{L^2}^2) \|\bfnabla\bfu\|_{L^2}^2 \;\rmd\mu_n(\bfu).
    \end{equation}
    
    We now look at the second term in the integrand in \eqref{meandissipationineqs}. We let
    \[
    \varphi(\bfu) = \psi'(\|\bfu\|_{L^2}^2) \dual{\bff, \bfu}_{V', V}.
    \]
    This term is weakly continuous in $V$, with the bound
    \[
        |\varphi(\bfu)| \leq (\sup \psi') \|\bff\|_{V'}\|\bfnabla\bfu\|_{L^2},
    \]
    so that $\varphi$ satisfies the conditions in \cref{lemgvwGcontinuous}, with $\delta = 1.$ Applying this lemma gives that the corresponding $G(\mu_n)$ converges to $G(\mu),$ which means that
    \begin{equation}
        \label{eqlimpsiprimedual}
        \int_H \psi'(\|\bfu\|_{L^2}^2) \dual{\bff, \bfu}_{V', V} \;\rmd\mu(\bfu) = \lim_{n\rightarrow \infty} \int_H \psi'(\|\bfu\|_{L^2}^2) \dual{\bff, \bfu}_{V', V} \;\rmd\mu_n(\bfu).
    \end{equation}
    Combining \eqref{eqliminfpsiprimenablau2} with \eqref{eqlimpsiprimedual} yields that
    \begin{multline*}
        \int_H \psi'(\|\bfu\|_{L^2}^2) \left\{ \nu\|\bfnabla\bfu\|_{L^2}^2 - \dual{\bff, \bfu}_{V', V} \right\}\;\rmd\mu(\bfu) \\ 
        \leq \liminf_{n\rightarrow \infty} \int_H \psi'(\|\bfu\|_{L^2}^2) \left\{ \nu\|\bfnabla\bfu\|_{L^2}^2 - \dual{\bff, \bfu}_{V', V} \right\}\;\rmd\mu_n(\bfu).
    \end{multline*}
    Since each $\mu_n$ belongs to $\calP_\fp(B_\rmw)$, the right hand side above is nonpositive, which implies that
    \begin{equation}
        \int_H \psi'(\|\bfu\|_{L^2}^2) \left\{ \nu\|\bfnabla\bfu\|_{L^2}^2 - \dual{\bff, \bfu}_{V', V} \right\}\;\rmd\mu(\bfu) \leq 0.
    \end{equation}
    Since $\psi$ is arbitrary, we find that the limit measure $\mu$ satisfies the mean energy dissipation inequality \eqref{meandissipationineqs}. This completes the proof that $\mu\in \calP_\fp(B_\rmw)$ and, hence, that $\calP_\fp(B_\rmw)$ is closed.
    
    The fact that $\calP_\fp(B_\rmw)$ is convex follows from the facts that $\calP(B_\rmw)$ is convex and that the conditions \eqref{finitemeanenstrophy} and \eqref{meandissipationineqs} characterizing $\calP_\fp(B_\rmw)$ are linear in $\mu$.    
\end{proof}

\begin{lemma}
    \label{lemfpssscompact}
    If $B$ is a weakly compact subset of $H$, then $\calP_\fpsss(B_\rmw)$ is a closed and convex subset of the compact space $\calP_\fp(B_\rmw)$, hence it is also compact.
\end{lemma}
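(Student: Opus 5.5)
Since $\calP_\fp(B_\rmw)$ is compact by \cref{lemfpcompact}, it suffices to show that $\calP_\fpsss(B_\rmw)$ is closed and convex in $\calP_\fp(B_\rmw)$. Compactness then follows because a closed subset of a compact space is compact.

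Convexity is immediate. $\calP_\fp(B_\rmw)$ is convex, and condition \eqref{eqliouville} is linear in $\mu$: for each fixed $\Psi\in\calT^\cyl$, the set of $\mu\in\calP_\fp(B_\rmw)$ with $\int \dual{\bfF(\bfu),\bfD\Psi(\bfu)}_{V',V}\,\rmd\mu=0$ is convex. Intersecting over all $\Psi$ preserves convexity. The integrals are well defined for such $\mu$ because $\mu$ is carried by $V$ and, by \cref{lemFDPsiVweak} together with \cref{lemmuenstrophybound}, the integrand is $\mu$-integrable.

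For closedness, we use that $B_\rmw$ is compact and metrizable, so $\calP(B_\rmw)$ is metrizable and it suffices to work with sequences. Take $\mu_n\in\calP_\fpsss(B_\rmw)$ with $\mu_n\wconv\mu$ in $\calP(B_\rmw)$. By \cref{lemfpcompact}, $\mu\in\calP_\fp(B_\rmw)$, and hence $\mu\in\calP_\ens(B_\rmw)$ by \cref{lemmuenstrophybound}. Fix $\Psi\in\calT^\cyl$ and set $g(\bfu)=\dual{\bfF(\bfu),\bfD\Psi(\bfu)}_{V',V}$.

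\begin{enumerate}[(a)]
\item By \cref{lemFDPsiVweak}, $g$ is continuous in the weak topology of $V$.
\item Since $B$ is weakly compact, it is bounded in $H$, say $\|\bfu\|_{L^2}\le R_B$ on $B$. Then the bound \eqref{eqboundonFDPsi} gives
\[
|g(\bfu)|\le C_\Psi\bigl(\|\bff\|_{V'}+\nu\|\bfnabla\bfu\|_{L^2}+c_LR_B^{1/2}\|\bfnabla\bfu\|_{L^2}^{3/2}\bigr).
\]
Using Young's inequality on the linear term, this is bounded by $C_0+C_1\|\bfnabla\bfu\|_{L^2}^{3/2}$, which is the sub-quadratic growth \eqref{subquadraticgrowth} with $\delta=1/2$.
\item \Cref{lemgvwGcontinuous} now applies on $\calP_\ens(B_\rmw)$ and yields $\int g\,\rmd\mu_n\to\int g\,\rmd\mu$.
\end{enumerate}

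Since $\int g\,\rmd\mu_n=0$ for every $n$, the limit satisfies $\int g\,\rmd\mu=0$. As $\Psi$ was arbitrary, $\mu$ satisfies \eqref{eqliouville}, so $\mu\in\calP_\fpsss(B_\rmw)$.

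The step I expect to need the most care is (c). The integrand $g$ is neither bounded nor weakly continuous on $H$, so weak-star convergence cannot be used directly. The difficulty is handled by the uniform mean-enstrophy bound and the truncation argument of \cref{lemgvwGcontinuous}. The only point to verify is that the cubic-type term $\|\bfu\|_{L^2}^{1/2}\|\bfnabla\bfu\|_{L^2}^{3/2}$ really is sub-quadratic, which holds because of the boundedness of $B$ in $H$.
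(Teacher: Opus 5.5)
Your proposal is correct and follows essentially the same route as the paper: you get closedness of $\calP_\fp(B_\rmw)$ from \cref{lemfpcompact}, then use the weak-$V$ continuity and sub-quadratic growth of $g$ from \cref{lemFDPsiVweak} (with $B$ bounded in $H$) to invoke \cref{lemgvwGcontinuous} and pass to the limit in the Liouville equation, with convexity coming from linearity. Your explicit choice $\delta = 1/2$ and your remarks on metrizability and integrability only spell out details the paper leaves implicit.
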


\begin{proof}
    Let $\{\mu_n\}_n$ be a sequence in $\calP_\fpsss(B_\rmw)$ such that $\mu_n \wconv \mu$ in $\calP(B_\rmw)$. Since $\calP_\fpsss(B_\rmw) \subset \calP_\fp(B_\rmw)$ and, from \cref{lemfpcompact}, $\calP_\fp(B_\rmw)$ is closed, then $\mu \in \calP_\fp(B_\rmw)$. Thus, it only remains to verify that $\mu$ satisfies \eqref{eqliouville}.
    
    Let $\Psi \in \calT^\cyl$ be fixed and define the functional $g(\bfu) = \dual{\bfF(\bfu), \bfD\Psi(\bfu)}_{V', V}$. 
    According to \cref{lemFDPsiVweak}, $g$ is continuous with respect to the weak topology of $V$ and satisfies the sub-quadratic growth condition \cref{subquadraticgrowth} since $B$ is bounded in $H$. Because $\mu_n \wconv \mu$ in $\calP_\fp(B_\rmw)$, it follows from \cref{lemgvwGcontinuous} that
    \[
        \int_B \dual{\bfF, \bfD\Psi}_{V', V} \;\rmd\mu_n(\bfu) \rightarrow \int_B \dual{\bfF, \bfD\Psi}_{V', V} \;\rmd\mu(\bfu) \quad \mbox{as } n \to \infty.
    \]
    Since $\mu_n \in \calP_\fpsss(B_\rmw)$ for each $n$, the left-hand side is zero for all $n$, implying the integral against $\mu$ is also zero. Since $\Psi$ was arbitrary in $\calT^\cyl$, we conclude that $\mu$ satisfies \cref{eqliouville}, and thus $\mu \in \calP_\fpsss(B_\rmw)$. Convexity is immediate from the linearity of the Liouville equation in $\mu$.
\end{proof}

\subsection{Characterization of the extreme points}
\label{secextremeconstrainedmeasures}

In this section, we give a precise characterization of the extreme points of $\calP_\fp(B_\rmw).$

\begin{lemma}
    \label{lemmuPfpcarriedonenergyshell}
    Let $B$ be a weakly compact subset of $H$. If $\mu$ is an extreme point of $\calP_\fp(B_\rmw),$ then it is carried by a single energy shell, i.e. there exists $e \geq 0$ such that $\mu(\{\bfu\in B; \|\bfu\|_{L^2}^2 = e\}) = 1.$
\end{lemma}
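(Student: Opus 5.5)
We argue by contradiction. Suppose $\mu$ is an extreme point of $\calP_\fp(B_\rmw)$ that is not carried by a single energy shell. We use the energy functional $E(\bfu)=\|\bfu\|_{L^2}^2$ to cut $\mu$ into two pieces, show that each (normalized) piece lies in $\calP_\fp(B_\rmw)$, and conclude that $\mu$ is a nontrivial convex combination of them.

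\emph{Choosing the threshold.} The pushforward $E_*\mu$ is a Borel probability measure on $[0,\infty)$, supported in $[0,\radius_0^2]$ by \cref{lemmusupportedinH}. If it is not a Dirac mass, there is an $s_0\geq 0$ such that the sets $A_-=\{\bfu\in B;\,E(\bfu)\le s_0\}$ and $A_+=\{\bfu\in B;\,E(\bfu)>s_0\}$ satisfy $\theta=\mu(A_+)\in(0,1)$. These sets are Borel, since $E$ is weakly lower semi-continuous. We then set
\[
\mu_+=\theta^{-1}\mu|_{A_+},\qquad \mu_-=(1-\theta)^{-1}\mu|_{A_-},
\]
so that $\mu=\theta\mu_++(1-\theta)\mu_-$. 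The two measures are distinct because they are carried by disjoint sets. Both are probability measures carried by $B$, and both have finite mean enstrophy, being dominated by multiples of $\mu$. It remains to verify the dissipation inequalities \eqref{meandissipationineqs} for each of them.

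\emph{Key step: the dissipation inequalities for $\mu_\pm$.} Given an admissible $\psi$, we must show
\[
\int_{A_\pm}\psi'(E)\,\ell\,\rmd\mu\le 0 .
\]
We do this by building new admissible functions $\tilde\psi$ whose derivatives localize to one side of $s_0$.

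For $\mu_-$, take $\tilde\psi'=\psi'\cdot\chi_\varepsilon$, where $\chi_\varepsilon$ is continuous and non-increasing, equals $1$ on $[0,s_0]$, equals $0$ on $[s_0+\varepsilon,\infty)$, and is linear in between. Then $\tilde\psi(s)=\int_0^s\tilde\psi'$ is $C^1$, nonnegative, non-decreasing, and has bounded derivative, so it is admissible. Applying \eqref{meandissipationineqs} with $\tilde\psi$ and letting $\varepsilon\to0$, dominated convergence applies: the integrand is bounded by $(\sup\psi')|\ell|$, which is $\mu$-integrable by the enstrophy bound \eqref{eqmuenstrophybound}. Since $\chi_\varepsilon\to\mathbf 1_{[0,s_0]}$ pointwise, this gives $\int_{A_-}\psi'(E)\ell\,\rmd\mu\le0$.

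For $\mu_+$, use instead $\tilde\psi'=\psi'\cdot(1-\chi_\varepsilon)$. This again yields an admissible $\tilde\psi$, and $1-\chi_\varepsilon\to\mathbf 1_{(s_0,\infty)}$ pointwise. The same limit argument gives $\int_{A_+}\psi'(E)\ell\,\rmd\mu\le0$.

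\emph{Conclusion.} With these inequalities, $\mu_\pm\in\calP_\fp(B_\rmw)$, contradicting the extremality of $\mu$. Hence $E_*\mu$ is a Dirac mass $\delta_e$, and $\mu$ is carried by the energy shell $\{E=e\}$.

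\emph{Main obstacle.} The delicate point is that admissible $\psi$ must be $C^1$ and non-decreasing, so the indicator cut-offs cannot be used directly. The argument hinges on approximating them by continuous monotone cut-offs of $\psi'$ and passing to the limit, which requires the $\mu$-integrability of $\ell$ supplied by \cref{lemmuenstrophybound}.
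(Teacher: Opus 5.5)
Your proof is correct and follows the paper's strategy. Like the paper, you split $\mu$ according to the energy $\|\bfu\|_{L^2}^2$, localize the dissipation inequalities through admissible functions with $\tilde\psi' = \xi\,\psi'$ for continuous cut-offs $\xi$, and pass to the limit by dominated convergence to place each normalized piece in $\calP_\fp(B_\rmw)$. The only difference is minor: the paper cuts out an annulus $\{e_1 < \|\bfu\|_{L^2}^2 < e_2\}$ with $e_1, e_2$ chosen as continuity points of the energy distribution so that the boundary shells are $\mu$-null, whereas your single threshold with one-sided cut-offs identically equal to $1$ on $[0,s_0]$ converges pointwise to the exact indicators of $\{\|\bfu\|_{L^2}^2 \le s_0\}$ and $\{\|\bfu\|_{L^2}^2 > s_0\}$, so you need no null-shell condition.
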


\begin{proof}
    First of all, recall that $\bfu \mapsto \|\bfu\|_{L^2}$ is a weakly lower-semi-continuous map in $H$, so that the energy shells are Borel subsets of $H_\rmw.$
    
    Now, suppose that $\mu$ is not carried by a single energy shell. Then, we can find\footnote{Consider the cumulative distribution function $F(e) = \mu(\left\{\bfu \in B; \; \|\bfu\|_{L^2}^2 \leq e\right\})$, which is a monotonic nondecreasing function with an at most countable number of discontinuities. If the corresponding distribution is not a Dirac delta measure, we can find at least two points of continuity of $F$ with $0 \leq F(e_1) < F(e_2) < 1.$ Since they are continuity points, the corresponding energy shells have zero measure, while the energy annulus has measure $0 < F(e_2) - F(e_1) < 1.$} $0 < e_1 < e_2 < \infty$ such that the associated energy shells have zero measure,
    \[ \mu(\left\{\bfu \in B; \; \|\bfu\|_{L^2}^2 = e_1\right\}) = \mu(\left\{\bfu \in B; \; \|\bfu\|_{L^2}^2 = e_2\right\}) = 0,
    \]
    and the corresponding energy annulus
    \[ 
        A_e = \left\{\bfu \in B; \; e_1 < \|\bfu\|_{L^2}^2 < e_2 \right\}
    \]
    has partial measure
    \[
        0 < \mu(A_e) < 1.
    \]

    Decompose the measure $\mu$ according to
    \[
        \mu = (1 - \theta)\mu_1 + \theta\mu_2, 
    \]
    where
    \[
        \theta = \mu(B\setminus A_e) = 1 - \mu(A_e)
    \]
    and
    \[
        \mu_1(E) =  \frac{1}{\mu(A_e)}\mu(E \cap A_e), \quad \mu_2(E) = \frac{1}{\mu(B\setminus A_e)}\mu(E \setminus A_e),
    \]
    for any Borel set $E\subset B_\rmw.$ Let us show that $\mu_1$ and $\mu_2$ belong to $\calP_\fp(B_\rmw).$

    Condition \eqref{finitemeanenstrophy}, for $\mu_1$ and $\mu_2,$ immediately follows from the linear relation
    \[
        (1 - \theta)\int_H \|\bfnabla \bfu\|_{L^2}^2 \;\rmd\mu_1(\bfu) + \theta \int_H \|\bfnabla \bfu\|_{L^2}^2 \;\rmd\mu_2(\bfu) = \int_H \|\bfnabla \bfu\|_{L^2}^2 \;\rmd\mu(\bfu) < \infty.
    \]

    For the condition \eqref{meandissipationineqs}, let $\psi$ be a real-valued continuously-differentiable function on $[0, \infty)$ which is nonnegative, non-decreasing and with bounded derivative.
    Let $\xi$ be a continuous, nonnegative real-valued function supported on the interval $(e_1, e_2).$ Consider the function $\tilde\psi(e) = \int_0^e \xi(s)\psi'(s)\;\rmd s,$ which is a nonnegative, non-decreasing continuously-differentiable function on $[0, \infty)$ such that $\tilde\psi' = \xi \psi'.$ Notice $\tilde\psi$ satisfies the conditions in \eqref{meandissipationineqs}. Then, since $\tilde\psi'$ is supported on $(e_1, e_2)$ and $\mu_2$ is carried by $B \setminus A_e,$ we have that
    \begin{align*}
        (1 - \theta) \int_H \tilde\psi'(\|\bfu\|_{L^2}^2) &  \left\{\nu\|\bfnabla\bfu\|_{L^2}^2 - \dual{\bff, \bfu}_{V', V} \right\}\;\rmd\mu_1(\bfu) \\ 
        & = (1 - \theta)\int_H \tilde\psi'(\|\bfu\|_{L^2}^2) \left\{\nu\|\bfnabla\bfu\|_{L^2}^2 - \dual{\bff, \bfu}_{V', V} \right\}\;\rmd\mu_1(\bfu) \\
        & \qquad + \theta \int_H \tilde\psi'(\|\bfu\|_{L^2}^2) \left\{\nu\|\bfnabla\bfu\|_{L^2}^2 - \dual{\bff, \bfu}_{V', V} \right\}\;\rmd\mu_2(\bfu) \\
        & = \int_H \tilde\psi'(\|\bfu\|_{L^2}^2) \left\{\nu\|\bfnabla\bfu\|_{L^2}^2 - \dual{\bff, \bfu}_{V', V} \right\}\;\rmd\mu(\bfu).
    \end{align*}
    Using condition \eqref{meandissipationineqs} for $\mu,$ we find that 
    \[
        \int_H \xi(\|\bfu\|_{L^2}^2)\psi'(\|\bfu\|_{L^2}^2) \left\{\nu\|\bfnabla\bfu\|_{L^2}^2 - \dual{\bff, \bfu}_{V', V} \right\}\;\rmd\mu_1(\bfu) \leq 0.
    \]
    Since $\xi$ is an arbitrary nonnegative, continuous function with support on the open interval $(e_1, e_2),$ we can take a sequence $\xi_n$ converging monotonically to the characteristic function of the interval and deduce, with the Lebesgue Dominated Convergence Theorem, that
    \[ \int_H \psi'(\|\bfu\|_{L^2}^2) \left\{\nu\|\bfnabla\bfu\|_{L^2}^2 - \dual{\bff, \bfu}_{V', V} \right\}\;\rmd\mu_1(\bfu) \leq 0.
    \]
    Since $\psi$ is arbitrary, we see that $\mu_1$ satisfies \eqref{meandissipationineqs}.

    The argument is similar for $\mu_2,$ choosing $\xi$ supported on the (relatively open) set $[0, e_1) \cup (e_2, \infty).$ Notice the condition that the energy levels $e_1$ and $e_2$ have zero measure allow us to apply the Lebesgue Dominated Convergence Theorem in this case as well, to obtain, at the limit of $\xi_n$ converging to the characteristic function of $[0, e_1) \cup (e_2, \infty),$ that 
    \[ \int_H \psi'(\|\bfu\|_{L^2}^2) \left\{\nu\|\bfnabla\bfu\|_{L^2}^2 - \dual{\bff, \bfu}_{V', V} \right\}\;\rmd\mu_2(\bfu) \leq 0,
    \]
    proving that $\mu_2$ also satisfies \eqref{meandissipationineqs}.
    
    Since $\mu_1$ and $\mu_2$ are distinct measures in $\calP_\fp(B_\rmw),$ and $\mu$ is a non-trivial convex combination of these two measures, we have a contradiction with the hypothesis that $\mu$ is an extreme measure in $\calP_\fp(B_\rmw).$ Therefore, any extreme measure in this space must be carried by a single energy shell.
\end{proof}

\begin{lemma}
    \label{lemextremePensd}
    Let $B$ be a weakly compact subset of $H$. A measure $\mu\in\calP(B_\rmw)$ is an extreme point of $\calP_\fp(B_\rmw)$ if, and only if, it is carried by a single energy shell and it is an extreme point of the space $\calP_\rmd(B_\rmw).$
\end{lemma}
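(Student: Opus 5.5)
The proof rests on one reduction: for measures carried by a single energy shell, the continuum of mean energy dissipation inequalities \eqref{meandissipationineqs} collapses to the single constraint $L(\mu)\leq 0$ of \eqref{meancleandissipationbound}. So on such measures $\calP_\fp(B_\rmw)$ and $\calP_\rmd(B_\rmw)$ coincide, and the two notions of extremality can be compared directly.

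\emph{Step 1 (reduction on a shell).} Write $S_e = \{\bfu\in B;\; \|\bfu\|_{L^2}^2 = e\}$, which is a Borel set (see the proof of \cref{lemmuPfpcarriedonenergyshell}). Let $\mu\in\calP(B_\rmw)$ be carried by $S_e$ and have finite mean enstrophy. For every admissible $\psi$ we have $\psi'(\|\bfu\|_{L^2}^2)=\psi'(e)$ $\mu$-a.s., so the integral in \eqref{meandissipationineqs} equals $\psi'(e)L(\mu)$. Since $\psi'(e)\geq 0$, $L(\mu)\leq 0$ implies \eqref{meandissipationineqs} for every $\psi$. Conversely, $\psi(s)=s$ is admissible, being nonnegative, non-decreasing and with bounded derivative on $[0,\infty)$, so \eqref{meandissipationineqs} implies $L(\mu)\leq 0$. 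Hence a measure carried by $S_e$ lies in $\calP_\fp(B_\rmw)$ if and only if it lies in $\calP_\rmd(B_\rmw)$.

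\emph{Step 2 (face property of the shell).} If $\mu = (1-\theta)\mu_1+\theta\mu_2$ with $0<\theta<1$, $\mu_i\in\calP(B_\rmw)$, and $\mu$ carried by $S_e$, then
\[
0=\mu(B\setminus S_e)\geq (1-\theta)\mu_1(B\setminus S_e),\qquad 0=\mu(B\setminus S_e)\geq \theta\,\mu_2(B\setminus S_e),
\]
so both $\mu_i$ are carried by $S_e$. In the same way, nonnegativity of $\|\bfnabla\bfu\|_{L^2}^2$ gives finite mean enstrophy for each $\mu_i$ whenever $\mu$ has it.

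\emph{Step 3 (the two implications).} Suppose first that $\mu$ is an extreme point of $\calP_\fp(B_\rmw)$. By \cref{lemmuPfpcarriedonenergyshell} it is carried by some shell $S_e$, and $\mu\in\calP_\fp(B_\rmw)\subset\calP_\rmd(B_\rmw)$. If $\mu=(1-\theta)\mu_1+\theta\mu_2$ with $\mu_i\in\calP_\rmd(B_\rmw)$ and $0<\theta<1$, Step 2 puts both $\mu_i$ on $S_e$. Step 1 then gives $\mu_i\in\calP_\fp(B_\rmw)$, and extremality in $\calP_\fp(B_\rmw)$ forces $\mu_1=\mu_2$. So $\mu$ is extreme in $\calP_\rmd(B_\rmw)$.

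Conversely, suppose $\mu$ is carried by $S_e$ and is extreme in $\calP_\rmd(B_\rmw)$. Step 1 gives $\mu\in\calP_\fp(B_\rmw)$. Any decomposition $\mu=(1-\theta)\mu_1+\theta\mu_2$ with $\mu_i\in\calP_\fp(B_\rmw)\subset\calP_\rmd(B_\rmw)$ is also a decomposition inside $\calP_\rmd(B_\rmw)$, so $\mu_1=\mu_2$. Hence $\mu$ is extreme in $\calP_\fp(B_\rmw)$.

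I expect no serious obstacle once Step 1 is in place. The only points to check with care are that $\psi(s)=s$ belongs to the admissible class, and that the case $\psi'(e)=0$ is harmless, which it is because the inequality then holds trivially.
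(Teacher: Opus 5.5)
Your proof is correct and follows essentially the same argument as the paper. You use \cref{lemmuPfpcarriedonenergyshell} to put an extreme point on a shell $S_e$, note that any decomposition of a shell-carried measure stays on $S_e$, and observe that on $S_e$ the dissipation inequalities reduce to $\psi'(e)L(\mu)\leq 0$, so $\calP_\fp(B_\rmw)$ and $\calP_\rmd(B_\rmw)$ coincide there. You also spell out two points the paper leaves implicit: the face property of the shell, and the admissibility of $\psi(s)=s$, which gives $\calP_\fp(B_\rmw)\subset\calP_\rmd(B_\rmw)$.
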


\begin{proof}
    If $\mu$ is an extreme point of $\calP_\fp(B_\rmw),$ we see from \cref{lemmuPfpcarriedonenergyshell} that $\mu$ is carried by a single energy shell, say $S_e = \{\|\bfu\|_{L^2}^2 = e \}.$ If $\mu$ were not an extreme point of $\calP_\rmd(B_\rmw),$ we could write
    \[
        \mu = (1 - \theta) \mu_1 + \theta \mu_2,
    \]
    for distinct $\mu_1, \mu_2 \in \calP_\rmd(B_\rmw)$ and some $0 < \theta < 1.$ Both $\mu_1$ and $\mu_2$ would have to be carried by $S_e$ as well. With that, we see that $\psi'(\|\bfu\|_{L^2}^2) = \psi'(e)$ is constant almost surely with respect to both $\mu_1$ and $\mu_2$, for any $\psi$ allowed in \eqref{meandissipationineqs}. Thus,
    \begin{multline}
        \label{ineqLpsiL}
        \int_H \psi'(\|\bfu\|_{L^2}^2)\left( \nu \|\bfnabla \bfu\|_{L^2}^2 - \dual{\bff, \bfu}_{V', V} \right) \;\rmd\mu_i(\bfu) \\ = \psi'(e)\int_H \left( \nu \|\bfnabla \bfu\|_{L^2}^2 - \dual{\bff, \bfu}_{V', V} \right) \;\rmd\mu_i(\bfu) \leq 0.
    \end{multline}
    This means that $\mu_1, \mu_2\in \calP_\fp(B_\rmw).$ But this implies that $\mu$ is not an extreme point of $\calP_\fp(B_\rmw),$ which is a contradiction. Thus, $\mu$ has to be an extreme point of $\calP_\rmd(B_\rmw),$ as well.
    
    Now, let $\mu$ be an extreme point of $\calP_\rmd(B_\rmw)$ carried by a single energy shell. As done in \eqref{ineqLpsiL}, this means that $\mu$ belongs to $\calP_\fp(B_\rmw).$ 
    Then, since $\mu\in\calP_\fp(B_\rmw)$ is an extreme point of $\calP_\rmd(B_\rmw)$ and $\calP_\fp(B_\rmw) \subset \calP_\rmd(B_\rmw),$ the measure $\mu$ must also be an extreme point of $\calP_\fp(B_\rmw).$
\end{proof}

\begin{lemma}
    \label{lemPdfaceofP}
    Let $B$ be a subset of $H$. The space $\calP_\rmv(B_\rmw)$ is a face of $\calP(B_\rmw).$
\end{lemma}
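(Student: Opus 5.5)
We argue directly from the definition of a face given above. $\calP_\rmv(B_\rmw)$ is nonempty as soon as $B\cap V\neq\emptyset$; a Dirac measure $\delta_\bfu$ with $\bfu\in B\cap V$ has finite mean enstrophy. In the degenerate case $B\cap V=\emptyset$ the set is empty and there is nothing of substance to prove. It is convex, since the integral of the nonnegative function $\bfu\mapsto\|\bfnabla\bfu\|_{L^2}^2$ (set equal to $+\infty$ off $V$) is affine in $\mu$ with values in $[0,\infty]$. So the content of the statement is the face property: if $\mu_1,\mu_2\in\calP(B_\rmw)$, $0<\theta<1$, and $\mu=(1-\theta)\mu_1+\theta\mu_2\in\calP_\rmv(B_\rmw)$, then $\mu_1,\mu_2\in\calP_\rmv(B_\rmw)$.

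The key step is to make the enstrophy integral meaningful for arbitrary Borel measures. Define $\zeta(\bfu)=\|\bfnabla\bfu\|_{L^2}^2$ for $\bfu\in V$ and $\zeta(\bfu)=+\infty$ for $\bfu\in H\setminus V$. This function is Borel on $H_\rmw$: it is the increasing limit of the truncations $\min\{M,\zeta\}$, which are weakly lower semi-continuous by the footnote in the proof of \cref{lemPensHwclosedinPHw}. Alternatively, $\zeta=\sup_k\|\bfnabla P_k\bfu\|_{L^2}^2$ is a supremum of weakly continuous functions. With this convention, $\mu$ has finite mean enstrophy exactly when $\int_H\zeta\,\rmd\mu<\infty$. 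This in particular forces $\mu(H\setminus V)=0$, which is what condition \eqref{finitemeanenstrophy} implicitly requires.

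Then linearity of the integral for nonnegative measurable functions with values in $[0,\infty]$ gives
\[
(1-\theta)\int_H\zeta\,\rmd\mu_1+\theta\int_H\zeta\,\rmd\mu_2=\int_H\zeta\,\rmd\mu<\infty .
\]
Both terms on the left are nonnegative and $\theta,1-\theta>0$, so each integral is finite. Hence $\mu_1,\mu_2\in\calP_\rmv(B_\rmw)$, which is the face property.

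The only point needing care, and what I expect to be the main obstacle, is measurability. Specifically, one must justify that $V$ is a Borel subset of $H_\rmw$ and that $\zeta$ is Borel measurable there, so that the integrals make sense for the general measures $\mu_1,\mu_2\in\calP(B_\rmw)$ and not only for measures already carried by $V$. I would handle this through the representation $\zeta=\sup_k\|\bfnabla P_k\bfu\|_{L^2}^2$. Each $P_k$ has finite rank with range in $D(A)$, so $\bfu\mapsto\|\bfnabla P_k\bfu\|_{L^2}^2$ is weakly continuous, and the supremum is finite precisely on $V$. Combined with the coincidence of weak and strong Borel sets in $H$, this settles the measurability issue.
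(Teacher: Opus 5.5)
Your proof is correct and is essentially the paper's argument: by linearity, $(1-\theta)\int\|\bfnabla\bfu\|_{L^2}^2\,\rmd\mu_1+\theta\int\|\bfnabla\bfu\|_{L^2}^2\,\rmd\mu_2$ equals the finite mean enstrophy of $\mu$, and nonnegativity then forces both terms to be finite. Your extra care goes beyond what the paper writes. You set the integrand to $+\infty$ off $V$ and check that $\zeta=\sup_k\|\bfnabla P_k\bfu\|_{L^2}^2$ is Borel on $H_\rmw$; this makes explicit what the paper leaves implicit when it evaluates the enstrophy functional on arbitrary $\mu_1,\mu_2\in\calP(B_\rmw)$.
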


\begin{proof}
    Let $H(\mu) = \int_H \|\bfnabla\bfu\|_{L^2}^2 \;\rmd\mu(\bfu),$ for any $\mu\in\calP(B_\rmw).$ Suppose $\mu_1, \mu_2 \in \calP(B_\rmw)$ and $0 < \theta < 1$ are such that $\mu = (1-\theta)\mu_1 + \theta \mu_2 \in \calP_\rmv(B_\rmw).$ By the linearity of $H,$ we find that $(1 - \theta) H(\mu_1) + \theta H(\mu_2) = H(\mu) < \infty.$ Then, since $H(\mu_1), H(\mu_2) \geq 0,$ we see that $H(\mu_1) \leq H(\mu) / (1 - \theta) < \infty$ and $H(\mu_2) \leq H(\mu) / \theta < \infty.$ This means that $\mu_1, \mu_2 \in \calP_\rmv(B_\rmw),$ proving that $\calP_\rmv(B_\rmw)$ is a face of $\calP(B_\rmw).$
\end{proof}

\begin{lemma}
    \label{lemextremePv}
    Let $B$ be a weakly compact subset of $H$. The extreme points of $\calP_\rmv(B_\rmw)$ are the Dirac delta measures $\mu = \delta_\bfu$ with $\bfu\in B \cap V.$
\end{lemma}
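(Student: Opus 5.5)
The argument combines the face property of \cref{lemPdfaceofP} with the characterization of extreme points of $\calP(Y)$ for separable metrizable $Y$.

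\textbf{Step 1: the extreme points of $\calP(B_\rmw)$.} Since $B$ is weakly compact, $B_\rmw$ is a compact space. It is also metrizable: $B$ is bounded, $H$ is separable, and bounded subsets of $H_\rmw$ are metrizable. Hence $B_\rmw$ is separable and metrizable, and by \cite[Theorem 15.9]{AliBor2006}, as recalled in the subsection on extreme points in measure spaces, we have
\[
\Ex(\calP(B_\rmw)) = \{\delta_\bfu;\ \bfu\in B\}.
\]

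\textbf{Step 2: pass to the face.} By \cref{lemPdfaceofP}, $\calP_\rmv(B_\rmw)$ is a face of the convex set $\calP(B_\rmw)$. By the footnote on faces, this gives
\[
\Ex(\calP_\rmv(B_\rmw)) = \Ex(\calP(B_\rmw))\cap \calP_\rmv(B_\rmw) = \{\delta_\bfu;\ \bfu\in B\}\cap\calP_\rmv(B_\rmw).
\]

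\textbf{Step 3: identify which Dirac measures have finite mean enstrophy.} For $\delta_\bfu$ we have
\[
\int_H\|\bfnabla\bfv\|_{L^2}^2\,\rmd\delta_\bfu(\bfv)=\|\bfnabla\bfu\|_{L^2}^2,
\]
with the convention that this quantity is $+\infty$ when $\bfu\notin V$. It is finite exactly when $\bfu\in V$. One should check that this convention is the one implicit in \eqref{finitemeanenstrophy}. It is consistent with the extension by $M$ outside $V$ used in the footnote of \cref{lemPensHwclosedinPHw}, and with the measurability of $V$ as a Borel subset of $H_\rmw$. Therefore $\delta_\bfu\in\calP_\rmv(B_\rmw)$ if and only if $\bfu\in B\cap V$, which yields the claim.

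\textbf{Main obstacle.} The only real subtlety is the metrizability and separability of $B_\rmw$ needed to invoke \cite[Theorem 15.9]{AliBor2006}. This has already been used in the background section: $B$ weakly compact implies $B$ bounded, by the uniform boundedness principle, and bounded sets of a separable Hilbert space are weakly metrizable. Everything else follows directly from earlier results.
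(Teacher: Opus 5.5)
Your proposal is correct and follows the paper's proof: the face property of \cref{lemPdfaceofP} gives $\Ex(\calP_\rmv(B_\rmw)) = \Ex(\calP(B_\rmw))\cap\calP_\rmv(B_\rmw)$, the extreme points of $\calP(B_\rmw)$ are the Dirac measures $\delta_\bfu$ with $\bfu\in B$, and $\delta_\bfu$ has finite mean enstrophy exactly when $\bfu\in V$. Your added remarks on the metrizability of $B_\rmw$ and on the convention $\|\bfnabla\bfu\|_{L^2}=+\infty$ for $\bfu\notin V$ make explicit what the paper leaves implicit.
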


\begin{proof}
    Here we use that $\calP_\rmv(B_\rmw)$ is a face of $\calP(B_\rmw),$ as proved in \cref{lemPdfaceofP}. This implies that $\Ex(\calP_\rmv(B_\rmw)) = \Ex(\calP(B_\rmw)) \cap \calP_\rmv(B_\rmw),$ where we recall the notation from \eqref{def:extreme:set}. Since the extreme points of $\calP(B_\rmw)$ are the Dirac delta measures $\delta_\bfu$ with $\bfu \in B,$ the condition that $\delta_\bfu \in \calP_\rmv(B_\rmw)$ means that $\bfu \in V,$ proving the result.
\end{proof}

\begin{lemma}
    \label{lemextremePd}
    Let $B$ be a weakly compact subset of $H$. A measure $\mu$ is an extreme point of $\calP_\rmd(B_\rmw)$ if, and only if, $\mu$ is either an extreme point of $\calP_\rmv(B_\rmw)$ with $L(\mu) \leq 0$ or a convex combination $\mu = (1-\theta)\mu_1 + \theta\mu_2$ of extreme points $\mu_1, \mu_2$ of $\calP_\rmv(B_\rmw),$ with $L(\mu_1) < 0 < L(\mu_2)$ and $L(\mu) = 0,$ where $L=L(\cdot)$ is given in \eqref{defLmu}.
\end{lemma}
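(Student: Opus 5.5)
The plan is to apply \cref{propextremewithrestriction} with $A = \calP_\rmv(B_\rmw)$, the linear functional $L$ from \eqref{defLmu}, and $c = 0$. Then $N = A\cap\{L\le 0\} = \calP_\rmd(B_\rmw)$ and $N' = A\cap\{L = 0\}$.

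\textbf{Choice of vector space.} We need an ambient real vector space $X$ on which $L$ is a genuine linear functional, with no topology required. Take $X$ to be the space of finite signed Borel measures $\mu$ on $B_\rmw$ such that $|\mu|$ is carried by $V$ and $\int_H \|\bfnabla\bfu\|_{L^2}^2\,\rmd|\mu|(\bfu)<\infty$. Since $B$ is bounded in $H$, we have $|\dual{\bff,\bfu}_{V',V}|\le \|\bff\|_{V'}\|\bfnabla\bfu\|_{L^2}\le \|\bff\|_{V'}(1+\|\bfnabla\bfu\|_{L^2}^2)$. Hence $\ell$ is $|\mu|$-integrable for every $\mu\in X$, so $L$ is well defined and linear on $X$. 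Moreover $A=\calP_\rmv(B_\rmw)$ is a convex subset of $X$.

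\textbf{Linear boundedness and closedness.} Take a line $\{\mu+t\eta;\ t\in\RR\}$ with $\mu\in A$ and $\eta\in X$, $\eta\neq0$. If $\eta(B)\neq 0$, total mass one forces at most one $t$. If $\eta(B)=0$, then $\mu+t\eta\in A$ exactly when $\mu+t\eta\ge 0$, since finite enstrophy is automatic in $X$. Because $\eta\neq 0$ has zero total mass, both $\eta^+$ and $\eta^-$ are nonzero. The condition $\mu(E)+t\eta(E)\ge0$ for all Borel $E$ is an intersection of closed half-lines in $t$, hence closed. Taking $E$ from a Hahn decomposition, with $\eta^\pm(E)>0$, bounds $t$ from both sides. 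So the intersection is a closed bounded interval.

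\textbf{Forward direction.} Let $\mu$ be an extreme point of $\calP_\rmd(B_\rmw)$. By \cref{propextremewithrestriction}, either $\mu$ is an extreme point of $A$ with $L(\mu)\le0$, or $\mu$ is an extreme point of $N'$. In the second case \cref{thmdubins} (with $n=1$) writes $\mu$ as a convex combination of at most two extreme points of $A$. If only one is needed, $\mu$ is an extreme point of $A$ with $L(\mu)=0$, which falls in the first alternative. Otherwise the last assertion of \cref{propextremewithrestriction} gives $L(\mu_1)<0<L(\mu_2)$ and $L(\mu)=0$. By \cref{lemextremePv}, these extreme points are $\delta_\bfu$ with $\bfu\in B\cap V$.

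\textbf{Converse.} If $\mu$ is an extreme point of $A$ lying in $N$, it is trivially an extreme point of $N\subset A$.

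Now let $\mu=(1-\theta)\delta_{\bfu_1}+\theta\delta_{\bfu_2}$ with $L(\delta_{\bfu_1})<0<L(\delta_{\bfu_2})$ and $L(\mu)=0$, and suppose $\mu=(1-s)\nu_1+s\nu_2$ with $\nu_i\in N$ and $0<s<1$.
\begin{enumerate}[(i)]
\item Positivity gives $\nu_1\le\mu/(1-s)$ and $\nu_2\le\mu/s$, so each $\nu_i$ is carried by $\{\bfu_1,\bfu_2\}$. Write $\nu_i=a_i\delta_{\bfu_1}+(1-a_i)\delta_{\bfu_2}$.
\item From $L(\nu_i)\le0$ and $(1-s)L(\nu_1)+sL(\nu_2)=L(\mu)=0$, we get $L(\nu_1)=L(\nu_2)=0$.
\item Since $a\mapsto aL(\delta_{\bfu_1})+(1-a)L(\delta_{\bfu_2})$ is strictly monotone, the condition $L(\nu_i)=0$ determines $a_i$ uniquely. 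Hence $a_1=a_2=1-\theta$ and $\nu_1=\nu_2=\mu$.
\end{enumerate}
Thus $\mu$ is an extreme point of $N$.

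The main obstacle is the functional-analytic setup: choosing $X$ so that $L$ is linear and everywhere finite on it, and verifying linear boundedness and closedness of $A$ in $X$. Once that is in place, the rest follows directly from \cref{propextremewithrestriction}, \cref{thmdubins} and \cref{lemextremePv}.
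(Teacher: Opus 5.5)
Your proof is correct and follows the paper's route. Like the paper, you apply \cref{propextremewithrestriction} in the space $X$ of finite-enstrophy signed measures, with $A=\calP_\rmv(B_\rmw)$ and $c=0$, after checking that $A$ is linearly bounded and linearly closed; your Hahn-decomposition argument for that check is cleaner than the paper's case analysis. You also go beyond the paper by proving the ``if'' direction, which the paper's proof never addresses: positivity forces each $\nu_i$ onto $\{\bfu_1,\bfu_2\}$, and $L(\nu_i)=0$ then fixes the weights. This step rightly uses the specific structure of probability measures, since \cref{propextremewithrestriction} alone would not give it.
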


\begin{proof}
    Here we apply \cref{propextremewithrestriction}, stemming from Dubins' \cref{thmdubins}. We take as the ambient vector space $X$ the space of signed Radon measures with finite mean enstrophy, i.e.
    \[
        X = \left\{\nu\in \calC_\rmb(B_\rmw)'; \; \int_B \|\bfnabla \bfu\|_{L^2}^2 \;\rmd|\nu|(\bfu) < \infty \right\}.
    \]
    Recall we do not need any completeness for Dubins' Theorem; we just need the vector space structure. We take the constant $c = 0$ and the linear functional $L=L(\nu),$ for $\nu\in X,$ as in \eqref{defLmu}, which is well-defined and finite in $X,$ as well. In this framework, we see that $A = \calP_\rmv(B_\rmw) = \calP(B_\rmw) \cap X$ is a convex subset of $X$, and $N = \calP_\rmd(B_\rmw) = \{\mu \in \calP_\rmv(B_\rmw); \;L(\mu) \leq 0\} = A \cap \{L \leq 0\}.$ We just need to check that $A$ is linearly bounded and linearly closed in $X$.

    Any line in $X$ is of the form $S = \{(1 - t)\nu_1 + t\nu_2; \;t\in\RR\},$ with $\nu_1, \nu_2 \in X,$ $\nu_1\neq \nu_2.$ It intercepts $A$ at a point $\mu = (1-t)\nu_1 + t\nu_2$ for a certain $t \in \RR$ if, and only if, $\mu$ is a Borel probability measure. This means (i) $(1 - t)\nu_1(B) + t\nu_2(B) = 1$, and (ii) $(1 - t)\nu_1(E) + t \nu_2(E) \geq 0,$ for any Borel set $E\subset B.$ In order to characterize the intersection $A \cap S$, we consider the corresponding set of parameters $J = \{t\,:\, (1 - t)\nu_1 + t\nu_2 \in A \cap S \}$, and split the proof into the following cases: 
    
    {\it Case 1:} $\nu_1(B) \neq \nu_2(B)$.
    Here, there is only one value of $t\in\RR$ such that condition (i) is satisfied, i.e. $(1 - t)\nu_1(B) + t\nu_2(B) = 1$. In this case, if condition (ii) is met, $J$ is a point, otherwise, it is empty.
    
    {\it Case 2:} $\nu_1(B) = \nu_2(B) \neq 1$.
    From condition (i), it follows that in this case $J$ is the empty set.

{\it Case 3:} $\nu_1(B) = \nu_2(B) = 1$.
    Here, condition (i) is trivially satisfied. Regarding condition (ii), first note that for every Borel set $E$ such that $\nu_1(E) = \nu_2(E)$, we must have $\nu_1(E) \geq 0$ for (ii) to be satisfied, which does not entail any condition on $t$. Now, consider the Borel sets $E$ for which $\nu_1(E) \neq \nu_2(E)$. Fix one such set $E$ and assume, without loss of generality, that $\nu_2(E) > \nu_1(E)$. Then, the condition $(1 - t)\nu_1(E) + t \nu_2(E) \geq 0$ implies $t \geq -\nu_1(E)/ (\nu_2(E) - \nu_1(E))$. On the other hand, since we also have $\nu_2(E^c) < \nu_1(E^c)$, then from $(1 - t)\nu_1(E^c) + t \nu_2(E^c) \geq 0$, we obtain that $t \leq \nu_1(E^c)/(\nu_1(E^c) - \nu_2(E^c)).$ Since these inequalities must hold for all Borel sets $E$ with $\nu_1(E) \neq \nu_2(E)$, we deduce that $t$ must belong to the intersection of all the corresponding intervals, which thus characterizes the set $J$ as
    \[
    J = \bigcap_{\{E: \nu_1(E) \neq \nu_2(E)\}} \left[ \frac{\nu_1(E)}{\nu_1(E) - \nu_2(E)},  \frac{\nu_1(E^c)}{\nu_1(E^c) - \nu_2(E^c)} \right].
    \]
Since the left end point of each interval is not necessarily smaller than the right end point, the set $J$ could be empty, or a single point, or a proper closed and bounded interval.

In any case, the set $J$ is a compact set in $\mathbb{R}.$ Since $S$ is the image of $J$ under the mapping $t \in \RR \mapsto (1-t)\nu_1 + t\nu_2 \in X$ and since this mapping is continuous with respect to the weak topology in $X$, we deduce that the image of $J$ is compact in $X$. Therefore, we conclude that the intersection of $A$ with the line $\{(1 - t)\nu_1 + t\nu_2; \;t\in\RR\}$ is linearly bounded and linearly closed in $X$.

    Thus, all the conditions of \cref{propextremewithrestriction} are met and we deduce that an extreme point $\mu$ of $N = \calP_\rmd(B_\rmw)$ is either an extreme point of $A = \calP_\rmv(B_\rmw)$ with $L(\mu) \leq 0$ or a convex combination $\mu = (1 - \theta) \mu_1 + \theta \mu_2$ of two distinct extreme points $\mu_1, \mu_2$ of $A = \calP_\rmv(B_\rmw),$ with $0 < \theta < 1$, $L(\mu_1) < 0 < L(\mu_2)$ and $L(\mu) = 0.$
\end{proof}

\begin{proposition}
    Let $B$ be a weakly compact subset of $H$. If $\mu$ is an extreme point of $\calP_\fp(B_\rmw),$ then it is either a Dirac delta measure $\mu = \delta_\bfu$ with $\bfu\in B \cap V$ and $\ell(\bfu) \leq 0$ or a convex combination $\mu = (1 - \theta)\delta_{\bfu_1} + \theta \delta_{\bfu_2}$ of two distinct Dirac delta measures with $\bfu_1, \bfu_2\in B \cap V$, $\|\bfu_1\|_{L^2} = \|\bfu_2\|_{L^2},$ $\ell(\bfu_1) < 0,$ $\ell(\bfu_2) > 0$ and $0 < \theta < 1$ such that $(1-\theta) \ell(\bfu_1) + \theta \ell(\bfu_2) = 0,$ i.e. $\theta = - \ell(\bfu_1) / (\ell(\bfu_2) - \ell(\bfu_1))$, where $\ell=\ell(\cdot)$ is given in \eqref{defellmu}.
\end{proposition}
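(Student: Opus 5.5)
The plan is to chain together the three characterization lemmas just established. Let $\mu$ be an extreme point of $\calP_\fp(B_\rmw)$. By \cref{lemextremePensd}, $\mu$ is carried by a single energy shell $S_e = \{\bfu\in B;\ \|\bfu\|_{L^2}^2 = e\}$ and is an extreme point of $\calP_\rmd(B_\rmw)$. By \cref{lemextremePd}, there are then two possibilities.

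In the first case, $\mu$ is an extreme point of $\calP_\rmv(B_\rmw)$ with $L(\mu)\le 0$. By \cref{lemextremePv}, this means $\mu=\delta_\bfu$ with $\bfu\in B\cap V$. Since $L(\delta_\bfu)=\ell(\bfu)$, the condition $L(\mu)\le 0$ reads $\ell(\bfu)\le 0$, which is the first alternative in the statement.

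In the second case, $\mu=(1-\theta)\mu_1+\theta\mu_2$ with $0<\theta<1$, where $\mu_1,\mu_2$ are extreme points of $\calP_\rmv(B_\rmw)$ satisfying $L(\mu_1)<0<L(\mu_2)$ and $L(\mu)=0$. Applying \cref{lemextremePv} again gives $\mu_i=\delta_{\bfu_i}$ with $\bfu_i\in B\cap V$. Then $L(\mu_i)=\ell(\bfu_i)$, so $\ell(\bfu_1)<0<\ell(\bfu_2)$; in particular the two points are distinct. The condition $L(\mu)=0$ becomes $(1-\theta)\ell(\bfu_1)+\theta\ell(\bfu_2)=0$, and solving this linear equation gives $\theta=-\ell(\bfu_1)/(\ell(\bfu_2)-\ell(\bfu_1))\in(0,1)$.

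It remains to show $\|\bfu_1\|_{L^2}=\|\bfu_2\|_{L^2}$, which is the one step not read off directly from the earlier lemmas. Since $\mu$ is carried by $S_e$ and $\mu=(1-\theta)\delta_{\bfu_1}+\theta\delta_{\bfu_2}$ with both weights strictly positive, we have $\mu(\{\bfu_i\})>0$ for $i=1,2$. Hence each $\bfu_i$ must lie in $S_e$, for otherwise $\mu(H\setminus S_e)>0$. Therefore $\|\bfu_1\|_{L^2}^2=\|\bfu_2\|_{L^2}^2=e$.

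The only thing to be careful about is compatibility of notions: extremality in \cref{lemextremePd} refers to the convex structure inside the ambient space $X$, which coincides with the convex structure of $\calP(B_\rmw)$. Likewise, $L(\delta_\bfu)=\ell(\bfu)$ holds precisely because $\bfu\in V$. Neither point is a genuine obstacle, so the proposition follows by assembling these lemmas.
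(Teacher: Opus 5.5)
Your proposal is correct and follows the paper's proof essentially step for step. Both arguments chain \cref{lemextremePensd}, \cref{lemextremePd} and \cref{lemextremePv}, then use $L(\delta_{\bfu})=\ell(\bfu)$; your explicit argument for $\|\bfu_1\|_{L^2}=\|\bfu_2\|_{L^2}$ (each atom has positive weight, so it must lie in the shell $S_e$ carrying $\mu$) spells out what the paper states in a single phrase.
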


\begin{proof}
    From \cref{lemextremePensd}, we see that
    \[
        \Ex(\calP_\fp(B_\rmw)) = \Ex(\calP_\rmd(B_\rmw)) \cap \{\mu\in\calP_\rmd; \;\exists \,e \geq 0, \;\mu(S_e) = 1 \},
    \]
    where $S_e = \{\|\bfu\|_{L^2}^2 = e \}.$ Thus, we need to characterize the extreme points of $\calP_\rmd(B_\rmw)$ and select those that are carried by a single energy shell.
    
    The extreme points of $\calP_\rmd(B_\rmw)$ are characterized in \cref{lemextremePd}. Thus, the extreme points of $\calP_\fp(B_\rmw)$ are either the extreme  points $\mu$ of $\calP_\rmv(B_\rmw)$ with $L(\mu) \leq 0$ and carried on a single energy shell or a convex combination $\mu = (1-\theta)\mu_1 + \theta\mu_2$ of extreme points $\mu_1, \mu_2$ of $\calP_\rmv(B_\rmw),$ with $L(\mu_1) < 0 < L(\mu_2),$ $L(\mu) = 0,$ and carried on the same energy shell.

    According to \cref{lemextremePv}, the extreme points of $\calP_\rmv(B_\rmw)$ are the Dirac delta measures $\mu = \delta_\bfu$ with $\bfu\in B \cap V,$ which are automatically carried by a single energy shell.

    Thus, we conclude that an extreme measure $\mu$ of $\calP_\fp(B_\rmw)$ is either a Dirac delta measures $\mu = \delta_\bfu$ with $\bfu \in B \cap V$ and $L(\delta_\bfu) = \ell(\bfu) \leq 0$ or a convex combination $\mu = (1 - \theta)\delta_{\bfu_1} + \theta \delta_{\bfu_2}$ of two distinct Dirac delta measures with $\bfu_1, \bfu_2\in B \cap V$, $0 < \theta < 1,$ $L(\delta_{\bfu_1}) = \ell(\bfu_1) < 0,$ $L(\delta_{\bfu_2}) = \ell(\bfu_2) > 0$ and $L(\mu) = (1 - \theta)\ell(\bfu_1) + \theta \ell(\bfu_2) = 0,$ and carried on the same energy level, i.e. $\|\bfu_1\|_{L^2}^2 = \|\bfu_2\|_{L^2}^2.$ This concludes the proof.
\end{proof}

We can write the extreme points of $\calP_\fp(B_\rmw)$ in the following form, combining both Dirac delta measures and convex combinations of two Dirac delta measures.

\begin{corollary}
    \label{propextremePfp}
    Let $B$ be a weakly compact subset of $H$. The set of extreme points of $\calP_\fp(B_\rmw)$ can be written in the combined form
    \[
        \Ex(\calP_\fp(B_\rmw)) = \left\{ \mu\in\calP(B_\rmw); \; \mu = \theta_1 \delta_{\bfu_1} + \theta_2 \delta_{\bfu_2}, (\theta_1, \theta_2, \bfu_1, \bfu_2) \in \calB_f \right\},
    \]
    where
    \[
         \calB_f = \calB_{f, 1} \cup \calB_{f, 2},
    \]
    with the Dirac delta measures associated with
    \[
        \calB_{f, 1} = \left\{ (1, 0, \bfu, \bfu), \;\bfu \in B \cap V, \;\ell(\bfu) \leq 0
        \right\}
    \]
    and the convex combinations of two Dirac delta measures associated with
    \[ \calB_{f, 2} = \left\{ (\theta_1, \theta_2, \bfu_1, \bfu_2), \;   
        \begin{aligned}
            & 0 < \theta_1 < 1, \;\theta_2 = 1 - \theta_1, \\
            & \bfu_1, \bfu_2 \in B \cap V, \;\|\bfu_1\|_{L^2} = \|\bfu_2\|_{L^2}, \\
            & \ell(\bfu_1) < 0 < \ell(\bfu_2), \;\theta_1\ell(\bfu_1) + \theta_2 \ell(\bfu_2)  = 0
        \end{aligned}
        \right\}.
    \]
\end{corollary}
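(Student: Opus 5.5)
The proof will show two inclusions. Denote by $\calE_f$ the right-hand set in the statement. The inclusion $\Ex(\calP_\fp(B_\rmw)) \subset \calE_f$ is essentially a rewriting of the preceding proposition. If an extreme point is $\delta_\bfu$ with $\bfu\in B\cap V$ and $\ell(\bfu)\le 0$, it corresponds to $(1,0,\bfu,\bfu)\in\calB_{f,1}$. If it is $(1-\theta)\delta_{\bfu_1}+\theta\delta_{\bfu_2}$ with the listed properties, we set $\theta_1 = 1-\theta$ and $\theta_2=\theta$. Then $\theta_1\ell(\bfu_1)+\theta_2\ell(\bfu_2)=0$, $0<\theta_1<1$, the norms are equal, and the signs of $\ell$ are as required, so the quadruple lies in $\calB_{f,2}$.

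For the reverse inclusion, which the corollary implicitly asserts by writing an equality, I will use \cref{lemextremePensd}. It states that a measure is extreme in $\calP_\fp(B_\rmw)$ if and only if it is carried by one energy shell and is extreme in $\calP_\rmd(B_\rmw)$. Every element of $\calE_f$ is carried by a single shell: this is trivial for a Dirac mass, and for a two-point measure it follows from $\|\bfu_1\|_{L^2}=\|\bfu_2\|_{L^2}$. It remains to show extremality in $\calP_\rmd(B_\rmw)$, using the "if" direction of \cref{lemextremePd}.

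Consider first $\delta_\bfu$ with $\bfu\in B\cap V$. By \cref{lemextremePv} it is extreme in $\calP_\rmv(B_\rmw)$, and $L(\delta_\bfu)=\ell(\bfu)\le0$, so it is extreme in $\calP_\rmd(B_\rmw)$. Next consider $\theta_1\delta_{\bfu_1}+\theta_2\delta_{\bfu_2}$ in $\calB_{f,2}$. Both Dirac masses are extreme in $\calP_\rmv(B_\rmw)$, and $L(\delta_{\bfu_1})<0<L(\delta_{\bfu_2})$ with $L(\mu)=0$, which matches the second alternative of \cref{lemextremePd}.

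The main obstacle is that the "if" direction of \cref{lemextremePd} rests on \cref{propextremewithrestriction}, which was proved only in one direction. So I would verify directly that such a two-point $\mu$ is extreme in $\calP_\rmd(B_\rmw)$. Suppose $\mu=(1-s)\eta_1+s\eta_2$ with $\eta_i\in\calP_\rmd(B_\rmw)$ and $0<s<1$. Each $\eta_i$ is absolutely continuous with respect to $\mu$, hence carried by $\{\bfu_1,\bfu_2\}$, say $\eta_i=a_i\delta_{\bfu_1}+(1-a_i)\delta_{\bfu_2}$. The condition $L(\eta_i)\le 0$ gives $a_i\ge\theta_1$, because $\ell(\bfu_1)<0<\ell(\bfu_2)$ makes $L(\eta_i)$ strictly decreasing in $a_i$ and it vanishes at $a_i=\theta_1$. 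The averaging relation $(1-s)a_1+sa_2=\theta_1$ then forces $a_1=a_2=\theta_1$, so $\eta_1=\eta_2=\mu$. The Dirac case is immediate, since any decomposition of $\delta_\bfu$ consists of copies of $\delta_\bfu$. This gives extremality in $\calP_\rmd(B_\rmw)$ and completes the proof of the equality.
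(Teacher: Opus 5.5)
Your proof is correct and follows the paper's own route. The inclusion $\Ex(\calP_\fp(B_\rmw))\subset\calE_f$ is the preceding proposition re-indexed with $\theta_1=1-\theta$. The reverse inclusion is what the paper tacitly obtains from the ``if'' halves of \cref{lemextremePensd} and \cref{lemextremePd}.

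Your direct absolute-continuity check for the two-point measures is a worthwhile addition rather than a detour. The proof of \cref{lemextremePd}, via \cref{propextremewithrestriction} and Dubins' theorem, only establishes the ``only if'' half. The converse genuinely needs the simplex structure of $\calP(B_\rmw)$ that you exploit: for a general convex set it fails, e.g.\ the midpoint of a diagonal of a square cut by the line $x=1/2$ is a convex combination of extreme points on opposite sides but is not extreme in the section.
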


\subsection{Selection principle}

The following principle distinguishing stationary statistical solutions within the space $\calP_\fp$ is a fundamental tool in the minimax formula.
\begin{lemma}
    \label{leminfintinfty}
    Let $B$ be a weakly compact subset of $H$. In $\calP_\fp(B_\rmw)$, we have the following characterization
    \[
        \inf_{\Psi\in\calT^\cyl} \int_B \dual{\bfF, \bfD\Psi}_{V', V} \;\rmd\mu = \begin{cases}
            0, & \textrm{if } \mu \in \calP_\fpsss(B_\rmw), \\
            -\infty, & \textrm{if } \mu \in \calP_\fp(B_\rmw) \setminus \calP_\fpsss(B_\rmw).
        \end{cases}
    \]
\end{lemma}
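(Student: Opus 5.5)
The plan is to exploit the fact that $\calT^\cyl$ is a vector space, so that $\Psi\mapsto \int_B \dual{\bfF,\bfD\Psi}_{V',V}\,\rmd\mu$ is a linear functional on it. First I will check that this functional is well defined and finite for every $\mu\in\calP_\fp(B_\rmw)$ and every $\Psi\in\calT^\cyl$. By \cref{lemmuenstrophybound}, $\mu\in\calP_\ens(B_\rmw)$. Hence $\mu$ is carried by $V$, and the integrand $g(\bfu)=\dual{\bfF(\bfu),\bfD\Psi(\bfu)}_{V',V}$ is measurable, since it is continuous in the weak topology of $V$ by \cref{lemFDPsiVweak}. For integrability I will use the bound \eqref{eqboundonFDPsi}, the fact that $B$ is bounded in $H$ (so that $\|\bfu\|_{L^2}^{1/2}$ is uniformly bounded on $B$), and the mean enstrophy bound \eqref{meanenstrophybound}. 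Together these give $\int_B|g|\,\rmd\mu<\infty$, exactly as in the proof of \cref{lemgvwGcontinuousinPsi}. Next, linearity: if $\Psi$ is built from $\psi$ and $\bfw_1,\dots,\bfw_m$, then $\lambda\Psi$ is built from $\lambda\psi\in\calC_c^1(\RR^m)$ and has $\bfD(\lambda\Psi)=\lambda\bfD\Psi$ by \eqref{eqfrechetPsi}. Sums of two cylindrical functionals are again cylindrical, obtained by concatenating the lists of $\bfw_j$'s. So $\Lambda_\mu(\Psi)=\int_B g\,\rmd\mu$ is a real-valued linear functional on $\calT^\cyl$.

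In the first case, $\mu\in\calP_\fpsss(B_\rmw)$, and \eqref{eqliouville} says precisely that $\Lambda_\mu\equiv 0$. The infimum is therefore $0$, attained for instance at $\Psi=0$.

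In the second case, $\mu\in\calP_\fp(B_\rmw)\setminus\calP_\fpsss(B_\rmw)$. Since $\mu$ already satisfies \eqref{finitemeanenstrophy} and \eqref{meandissipationineqs}, membership in $\calP_\fpsss$ can only fail through \eqref{eqliouville}. So there is $\Psi_0\in\calT^\cyl$ with $\Lambda_\mu(\Psi_0)=a\neq0$. Setting $\Psi_\lambda=-\lambda\,\mathrm{sgn}(a)\Psi_0\in\calT^\cyl$ for $\lambda>0$, linearity gives $\Lambda_\mu(\Psi_\lambda)=-\lambda|a|\to-\infty$ as $\lambda\to\infty$. Hence the infimum is $-\infty$.

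The argument is elementary. The only point needing care is the first step: checking that the integral is finite for all measures in $\calP_\fp(B_\rmw)$, not just stationary ones. This requires the enstrophy bound of \cref{lemmuenstrophybound} and the boundedness of $B$ to control the cubic term $B(\bfu,\bfu)$ through \eqref{eqboundBuuinVprime}. Once that finiteness is in place, linearity of $\Lambda_\mu$ is exactly what produces the dichotomy between $0$ and $-\infty$.
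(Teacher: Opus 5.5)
Your proposal is correct and follows essentially the same argument as the paper. In the stationary case the Liouville equation gives $0$, and otherwise a single $\Psi_0$ with nonzero integral is rescaled within the vector space $\calT^\cyl$ to drive the integral to $-\infty$. Your explicit check that the integral is finite for every $\mu\in\calP_\fp(B_\rmw)$, via the enstrophy bound and the boundedness of $B$, is a point the paper leaves implicit and is a worthwhile addition.
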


\begin{proof}
    If $\mu\in \calP_\fp(B_\rmw)\setminus \calP_\fpsss(B_\rmw)$, it means the Liouville equation \eqref{eqliouville} is not satisfied, and there exists at least one $\Psi_0\in\calT^\cyl$ such that
    \[
        \int_H \dual{\bfF(\bfu), \bfD\Psi_0(\bfu)}_{V', V} \;\rmd\mu(\bfu) = \alpha \neq 0,
    \]
    for some $\alpha.$ Since this integral is linear in the test functional, we multiply it by $\lambda\in\RR$ to find that
    \[
        \int_H \dual{\bfF(\bfu), \bfD(\lambda\Psi_0)(\bfu)}_{V', V} \;\rmd\mu(\bfu) = \lambda\alpha.
    \]
    Since $\lambda$ is arbitrary and $\alpha \neq 0$, we find that
    \[
        \inf_{\lambda\in\RR} \int_H \dual{\bfF(\bfu), \bfD(\lambda\Psi_0)(\bfu)}_{V', V} \;\rmd\mu(\bfu) = -\infty.
    \]
    Since $\calT^\cyl$ is a vector subspace, we have $\lambda\Psi_0\in\calT^\cyl,$ for any $\lambda\in\RR$, which means that
    \[
        \inf_{\Psi\in\calT^\cyl} \int_H \dual{\bfF(\bfu), \bfD\Psi(\bfu)}_{V', V} \;\rmd\mu(\bfu) = -\infty.
    \]
    On the other hand, if $\mu\in \calP_\fpsss(B_\rmw)$, then the Liouville equation \eqref{eqliouville}, valid for any $\Psi\in\calT^\cyl,$ says that
    \[
        \inf_{\Psi\in\calT^\cyl} \int_H \dual{\bfF(\bfu), \bfD\Psi(\bfu)}_{V', V} \;\rmd\mu(\bfu) = 0.
    \]
    This completes the characterization.
\end{proof}

\section{Main result}
\label{secmainresult}

We are now ready to prove our main result.

\begin{theorem}
    \label{maintheorem}
    Assume the settings described in \cref{sec3dnse}, in particular with $\Omega\subset\RR^3$ bounded and with $\bff\in V'.$ Suppose $B$ is a weakly compact subset of $H$ for which $\calP_\fpsss(B)$ is non-empty. Then, for any $\phi: V\cap B\rightarrow \RR$ which is weakly-continuous in $V$ and has sub-quadratic growth on $V$ (as in \cref{lemgvwGcontinuous}), it follows that
    \begin{multline}
        \label{minimaxformula}
        \max_{\mu\in\calP_\fpsss(B)} \int_H \phi(\bfu) \;\rmd\mu(\bfu) \\
        = \inf_{\Psi\in\calT^\cyl} \max_{ (\bfu_1, \bfu_2, \theta_1, \theta_2)\in \calB_f} \;\sum_{i=1}^2\theta_i\{\phi(\bfu_i) + \dual{\bfF(\bfu_i), \bfD\Psi(\bfu_i)}_{V', V} \},
    \end{multline}
    where $\calB_f$ is given in \cref{propextremePfp}.
\end{theorem}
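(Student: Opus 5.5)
The proof will follow the chain of identities announced in the Introduction, using the lemmas of \cref{secprelim}. Write
\[
G(\mu,\Psi)=\int_B\{\phi(\bfu)+\dual{\bfF(\bfu),\bfD\Psi(\bfu)}_{V',V}\}\,\rmd\mu(\bfu).
\]

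\textbf{Step 1: reduce to the weak topology and to $\calP_\fp(B_\rmw)$.} By \cref{lemmusupportedinH} and \cref{lemmuenstrophybound}, every measure in $\calP_\fpsss(B)$ lies in $\calP_\ens(B)$. By \cref{lemPensBequalsPensBw}, $\calP_\fpsss(B)=\calP_\fpsss(B_\rmw)$ as topological spaces. \cref{lemfpssscompact} shows this space is compact, and it is nonempty by hypothesis. \cref{corgvwGcontinuous} with $\Psi=0$ shows that $\mu\mapsto\int\phi\,\rmd\mu$ is continuous on it, so the maximum on the left of \eqref{minimaxformula} is attained.

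By \cref{leminfintinfty}, $\inf_{\Psi}G(\mu,\Psi)$ equals $\int\phi\,\rmd\mu$ for $\mu\in\calP_\fpsss(B_\rmw)$ and $-\infty$ for $\mu\in\calP_\fp(B_\rmw)\setminus\calP_\fpsss(B_\rmw)$. Here we use that $\int\phi\,\rmd\mu$ is finite, by sub-quadratic growth and the enstrophy bound. Since $\calP_\fpsss$ is nonempty, this gives
\[
\max_{\calP_\fpsss(B)}\int\phi\,\rmd\mu=\sup_{\mu\in\calP_\fp(B_\rmw)}\inf_{\Psi}G(\mu,\Psi).
\]

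\textbf{Step 2: exchange max and inf.} I apply Sion's \cref{thmsionminimax} with $U=\calP_\fp(B_\rmw)$ and $Z=\calT^\cyl$, so that the supremum above becomes a maximum. The set $U$ is compact and convex by \cref{lemfpcompact}. It sits inside the locally convex space of signed measures with the weak-star topology generated by $\calC_\rmb(B_\rmw)$, and its topology is the one inherited from $\calP(B_\rmw)$. The vector space $Z$ carries the norm \eqref{eqcalTcylmetric}.

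The function $G$ is affine in $\mu$ and linear in $\Psi$. It is continuous in $\mu$ on $U\subset\calP_\ens(B_\rmw)$ by \cref{corgvwGcontinuous}, and continuous in $\Psi$ by \cref{lemgvwGcontinuousinPsi}. This yields $\inf_\Psi\max_{\mu\in U}G(\mu,\Psi)$.

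\textbf{Step 3: pass to extreme points.} For fixed $\Psi$, the map $\mu\mapsto G(\mu,\Psi)$ is continuous and affine, hence convex, on the compact convex set $U$. The ambient space is Hausdorff, since $\calP(B_\rmw)$ is Hausdorff and the signed-measure space is separated by $\calC_\rmb(B_\rmw)$. Bauer's \cref{thmbauer} then says the maximum is attained at a point of $\Ex(\calP_\fp(B_\rmw))$. Since extreme points belong to $U$, the maximum over $U$ equals the maximum over the extreme points.

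By \cref{propextremePfp}, these are exactly the measures $\theta_1\delta_{\bfu_1}+\theta_2\delta_{\bfu_2}$ with $(\theta_1,\theta_2,\bfu_1,\bfu_2)\in\calB_f$. Integrating $G$ against such a measure gives the sum on the right of \eqref{minimaxformula}. The maximum over $\calB_f$ is attained because it coincides with the attained maximum over $\Ex(\calP_\fp(B_\rmw))$.

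\textbf{Main obstacle.} I expect the delicate step to be the topological bookkeeping in Sion and Bauer. One must verify that $\calP_\fp(B_\rmw)$ is a compact convex subset of a genuine locally convex Hausdorff topological vector space. One must also check that the continuity from \cref{lemgvwGcontinuous}, which is proved along sequences, suffices. It does, because $\calP(B_\rmw)$ is metrizable, so continuity along sequences is continuity in its topology.

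A minor point is the parametrization of $\calB_f$. In \cref{propextremePfp} the tuple is ordered $(\theta_1,\theta_2,\bfu_1,\bfu_2)$, while \eqref{minimaxformula} writes $(\bfu_1,\bfu_2,\theta_1,\theta_2)$; this is the same set up to reordering. For the Dirac case, $\theta_2=0$ makes the second summand vanish.
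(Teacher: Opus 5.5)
Your proposal is correct and follows the paper's own proof essentially step for step. The chain is the same: pass to $\calP_\fpsss(B_\rmw)$ using compactness and continuity, enlarge to $\calP_\fp(B_\rmw)$ via the selection principle \cref{leminfintinfty}, then apply Sion's theorem, Bauer's maximum principle, and the characterization of $\Ex(\calP_\fp(B_\rmw))$ through $\calB_f$. Your extra bookkeeping tidies up points the paper states loosely (for instance, it calls $\calP_\fp(B_\rmw)$ a ``compact topological vector space''). This includes embedding $\calP_\fp(B_\rmw)$ in the locally convex Hausdorff space of signed measures with the $\calC_\rmb(B_\rmw)$ weak-star topology, and noting that sequential continuity suffices by metrizability.
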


\begin{proof}
    We prove the result in a series of steps, as outlined in the Introduction.
    
    \textbf{Step 1 (change to weak topology):} We recall that the Borel sets of $H$ in the strong topology coincide with the Borel sets of $H$ in the weak topology, thus, as sets, $\calP_\fpsss(B)$ equals $\calP_\fpsss(B_\rmw)$. We also observe, from \cref{lemfpssscompact}, that $\calP_\fpsss(B_\rmw)$ is compact, while, by \cref{lemgvwGcontinuous}, the map $\mu \mapsto \int_H \phi(\bfu) \;\rmd\mu(\bfu)$ is continuous on $\calP_\fpsss(B_\rmw)$. Thus, the supremum of this map is achieved on this set, and we can indeed write the supremum as a maximum, on either set, and find that
    \[
        \max_{\mu\in\calP_\fpsss(B)} \int_H \phi(\bfu) \;\rmd\mu(\bfu) = \max_{\mu\in\calP_\fpsss(B_\rmw)} \int_H \phi(\bfu) \;\rmd\mu(\bfu).
    \]

    \textbf{Step 2 (add auxiliary functional):} By construction, the measures on $\calP_\fpsss(B_\rmw)$ satisfy the Liouville equation
    \[
        \int_H \dual{\bfF(\bfu), \bfD\Psi(\bfu)}_{V', V} \;\rmd\mu(\bfu) = 0,
    \]
    for any $\Psi\in\calT^\cyl,$ so we are just adding zero to find that, for any $\mu\in \calP_\fpsss(B_\rmw)$,
    \[
        \int_H \phi(\bfu) \;\rmd\mu(\bfu) = \int_H \left\{ \phi(\bfu) + \dual{\bfF(\bfu), \bfD\Psi(\bfu)}_{V', V}\right\}\;\rmd\mu(\bfu).
    \]
    Taking the infimum in $\Psi$ yields
    \[
        \int_H \phi(\bfu) \;\rmd\mu(\bfu) = \inf_{\Psi\in\calT^\cyl} \int_H \left\{ \phi(\bfu) + \dual{\bfF(\bfu), \bfD\Psi(\bfu)}_{V', V}\right\}\;\rmd\mu(\bfu).
    \]
    Since this holds for any $\mu\in \calP_\fpsss(B_\rmw)$, we take the maximum in this space to find that
    \begin{multline*}
        \max_{\mu\in\calP_\fpsss(B_\rmw)} \int_H \phi(\bfu) \;\rmd\mu(\bfu) \\
        = \max_{\mu\in\calP_\fpsss(B_\rmw)} \inf_{\Psi\in\calT^\cyl} \int_H \left\{ \phi(\bfu) + \dual{\bfF(\bfu), \bfD\Psi(\bfu)}_{V', V}\right\}\;\rmd\mu(\bfu).
    \end{multline*}

    \textbf{Step 3 (extend to regular measures):} Since $\int_H \phi(\bfu) \;\rmd\mu(\bfu)$ is finite for $\mu\in \calP_\fp(B_\rmw)$, it follows from the selection result in \cref{leminfintinfty} that
    \begin{multline*}
        \inf_{\Psi\in\calT^\cyl} \int_H \left\{ \phi(\bfu) + \dual{\bfF(\bfu), \bfD(\lambda\Psi)(\bfu)}_{V', V} \right\} \;\rmd\mu(\bfu) \\
        = \int_H \phi(\bfu) \;\rmd\mu(\bfu) - \infty = -\infty, \quad \forall \mu \in \calP_\fp(B_\rmw)\setminus \calP_\fpsss(B_\rmw),
    \end{multline*}
    while
    \begin{multline*}
        \inf_{\Psi\in\calT^\cyl} \int_H \left\{ \phi(\bfu) + \dual{\bfF(\bfu), \bfD(\lambda\Psi)(\bfu)}_{V', V} \right\} \;\rmd\mu(\bfu) \\ 
            = \int_H \phi(\bfu) \;\rmd\mu(\bfu) > -\infty, \quad \forall \mu \in \calP_\fpsss(B_\rmw).
    \end{multline*}
    Therefore, when taking the maximum over $\mu$ in $\calP_\fp(B_\rmw)$, it must lie in $\calP_\fpsss(B_\rmw)$. In other words,
    \begin{multline*}
        \max_{\mu\in\calP_\fpsss(B_\rmw)} \inf_{\Psi\in\calT^\cyl} \int_H \left\{ \phi(\bfu) + \dual{\bfF(\bfu), \bfD\Psi(\bfu)}_{V', V}\right\}\;\rmd\mu(\bfu) \\
        = \max_{\mu\in\calP_\fp(B_\rmw)} \inf_{\Psi\in\calT^\cyl} \int_H \left\{ \phi(\bfu) + \dual{\bfF(\bfu), \bfD\Psi(\bfu)}_{V', V}\right\}\;\rmd\mu(\bfu).
    \end{multline*}

    \textbf{Step 4 (Sion's minimax theorem):} Now we apply Sion's Minimax \cref{thmsionminimax}. We have that $\calP_\fp(B_\rmw)$ is a compact topological vector space, and $\calT^\cyl$ is a normed vector space. As for the function on the product space $\calP_\fp(B_\rmw) \times \calT^\cyl$, we consider
    \[ 
        G(\mu, \Psi) = \int_H \left\{ \phi(\bfu) + \dual{\bfF(\bfu), \bfD\Psi(\bfu)}_{V', V}\right\}\;\rmd\mu(\bfu),
    \]
    which is linear in $\mu\in\calP_\fp(B_\rmw)$ and affine in $\Psi\in\calT^\cyl.$ Thanks to \cref{corgvwGcontinuous} and \cref{lemgvwGcontinuousinPsi}, $G=G(\mu,\Psi)$ is continuous in each variable. Thus, \cref{thmsionminimax} applies and we find
    \begin{multline*}
        \max_{\mu\in\calP_\fp(B_\rmw)} \inf_{\Psi\in\calT^\cyl} \int_H \left\{ \phi(\bfu) + \dual{\bfF(\bfu), \bfD\Psi(\bfu)}_{V', V}\right\}\;\rmd\mu(\bfu) \\
        = \inf_{\Psi\in\calT^\cyl} \max_{\mu\in\calP_\fp(B_\rmw)} \int_H \left\{ \phi(\bfu) + \dual{\bfF(\bfu), \bfD\Psi(\bfu)}_{V', V}\right\}\;\rmd\mu(\bfu).
    \end{multline*}

    \textbf{Step 5 (Bauer's maximum principle):} Now we apply Bauer's maximum principle stated in \cref{thmbauer}. We have that $\calP_\fp(B_\rmw)$ is a compact and convex topological vector space, and $\mu \mapsto G(\mu, \Psi)$ is continuous, for every $\Psi\in\calT^\cyl$, thanks to \cref{corgvwGcontinuous}. Thus, the conditions of \cref{thmbauer} are met and we find that
    \begin{multline*} 
        \max_{\mu\in\calP_\fp(B_\rmw)} \int_H \left\{ \phi(\bfu) + \dual{\bfF(\bfu), \bfD\Psi(\bfu)}_{V', V}\right\}\;\rmd\mu(\bfu) \\
          = \max_{\mu \in \Ex(\calP_\fp(B_\rmw))} \int_H \left\{ \phi(\bfu) + \dual{\bfF(\bfu), \bfD\Psi(\bfu)}_{V', V}\right\}\;\rmd\mu(\bfu),
    \end{multline*} 
    where $\Ex(\calP_\fp(B_\rmw))$ is the set of extreme points of $\calP_\fp(B_\rmw).$ Taking the infimum in $\Psi\in\calT^\cyl$ yields
    \begin{multline*}
        \inf_{\Psi\in\calT^\cyl} \max_{\mu\in\calP_\fp(B_\rmw)} \int_H \left\{ \phi(\bfu) + \dual{\bfF(\bfu), \bfD\Psi(\bfu)}_{V', V}\right\}\;\rmd\mu(\bfu) = \\
        \inf_{\Psi\in\calT^\cyl} \max_{\mu \in \Ex(\calP_\fp(B_\rmw))} \int_H \left\{ \phi(\bfu) + \dual{\bfF(\bfu), \bfD\Psi(\bfu)}_{V', V}\right\}\;\rmd\mu(\bfu).
    \end{multline*} 

    \textbf{Step 6 (extremes in $\calP_\fp(B_\rmw)$):} Now we use \cref{propextremePfp} that characterizes the extreme points in $\calP_\fp(B_\rmw)$ to find that
    \begin{multline*}
        \max_{\mu\in\Ex(\calP_\fp(B_\rmw))} \int_H \left\{ \phi(\bfu) + \dual{\bfF(\bfu), \bfD\Psi(\bfu)}_{V', V}\right\}\;\rmd\mu(\bfu) \\
        = \max_{(\bfu_1, \bfu_2, \theta_1, \theta_2)\in \calB_f} \;\sum_{i=1}^2\theta_i\{\phi(\bfu_i) + \dual{\bfF(\bfu_i), \bfD\Psi(\bfu_i)}_{V', V} \},
    \end{multline*}
    where $\calB_f$ is given in \cref{propextremePfp}. Taking the infimum in $\Psi$ yields the last step
    \begin{multline*}
        \inf_{\Psi\in\calT^\cyl}\max_{\mu\in\Ex(\calP_\fp(B_\rmw))} \int_H \left\{ \phi(\bfu) + \dual{\bfF(\bfu), \bfD\Psi(\bfu)}_{V', V}\right\}\;\rmd\mu(\bfu) \\
        = \inf_{\Psi\in\calT^\cyl} \max_{(\bfu_1, \bfu_2, \theta_1, \theta_2)\in \calB_f} \;\sum_{i=1}^2\theta_i\{\phi(\bfu_i) + \dual{\bfF(\bfu_i), \bfD\Psi(\bfu_i)}_{V', V} \}.
    \end{multline*}

    \textbf{Conclusion:} Putting the steps together completes the proof.
\end{proof}

Since any Foias-Prodi stationary statistical solution on $H$ is carried by a bounded ball in $H$, according to \cref{lemmusupportedinH}, the following corollary holds.

\begin{corollary}
    Assume the settings described in \cref{sec3dnse}, in particular with $\Omega\subset\RR^3$ bounded and with $\bff\in V'.$ Then, for any $\phi: V\cap B_H(\radius_0)\rightarrow \RR$ which is weakly-continuous in $V$ and has sub-quadratic growth on $V$ (as in \cref{lemgvwGcontinuous}), it follows that
    \begin{multline}
        \max_{\mu\in\calP_\fpsss(H)} \int_H \phi(\bfu) \;\rmd\mu(\bfu) \\
        = \inf_{\Psi\in\calT^\cyl} \max_{(\bfu_1, \bfu_2, \theta_1, \theta_2)\in \calB_f} \;\sum_{i=1}^2\theta_i\{\phi(\bfu_i) + \dual{\bfF(\bfu_i), \bfD\Psi(\bfu_i)}_{V', V} \},
    \end{multline}
    where $\calB_f$ is given in \cref{propextremePfp}, with $B=B_H(\radius_0),$ where $\radius_0$ is given by \eqref{defrho0}.
\end{corollary}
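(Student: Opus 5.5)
The plan is to reduce the corollary directly to \cref{maintheorem} with the choice $B = B_H(\radius_0)$, the closed ball of radius $\radius_0$ in $H$. First, $B$ is closed, bounded and convex in $H$, hence weakly closed and, by Banach--Alaoglu in the Hilbert space $H$, weakly compact. So the standing hypothesis on $B$ in \cref{maintheorem} holds.

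Next, I need $\calP_\fpsss(B)$ to be non-empty. The ball $B_H(\radius)$ is positively invariant for every $\radius\geq\radius_0$: this follows from the energy estimate \eqref{eqenergyineq}, since $\|\bfu(0)\|_{L^2}^2\leq \radius_0^2$ implies $\|\bfu(t)\|_{L^2}^2\leq \radius_0^2$ for all $t\geq 0$. As noted in \eqref{lemPBHr0notempty}, a time-average stationary statistical solution generated by a Leray--Hopf solution starting in $B$ is therefore carried by $B$. Hence $\calP_\fpsss(B_\rmw)\neq\emptyset$, and, since the two sets coincide as sets, $\calP_\fpsss(B)\neq\emptyset$.

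The key identification is $\calP_\fpsss(H) = \calP_\fpsss(B_H(\radius_0))$ as sets of measures. The inclusion $\supset$ is trivial. For $\subset$, any $\mu\in\calP_\fpsss(H)$ lies in $\calP_\fp(H)$, and \cref{lemmusupportedinH} shows it is carried by $B_H(\radius_0)$. Consequently $\sup_{\mu\in\calP_\fpsss(H)}\int\phi\,\rmd\mu$ equals the maximum over $\calP_\fpsss(B)$. That maximum is attained by Step 1 of the proof of \cref{maintheorem}, so the left-hand side is a genuine maximum.

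One point needs care: $\phi$ is only defined on $V\cap B_H(\radius_0)$, while $\int_H\phi\,\rmd\mu$ must make sense for every $\mu\in\calP_\fpsss(H)$. This is resolved by the same support result, since each such $\mu$ is carried by $V\cap B$. Applying \cref{maintheorem} with this $B$ and this $\phi$ then yields the stated formula, with $\calB_f$ built from $B=B_H(\radius_0)$. There is no real obstacle here; the substantive content lies in \cref{lemmusupportedinH} and \cref{maintheorem}.
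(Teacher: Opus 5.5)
Your proposal is correct and matches the paper's proof: apply \cref{maintheorem} with $B=B_H(\radius_0)$, using \cref{lemmusupportedinH} to get $\calP_\fpsss(H)=\calP_\fpsss(B_H(\radius_0))$ and \eqref{lemPBHr0notempty} for non-emptiness. Your extra checks are details the paper leaves implicit: weak compactness of the closed ball, positive invariance at $\radius=\radius_0$ from \eqref{eqenergyineq}, and the remark that each $\mu$ is carried by $V\cap B$, so $\int_H\phi\,\rmd\mu$ makes sense.
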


\begin{proof}
    This follows from \cref{maintheorem}, using that, from \cref{lemmusupportedinH}, $\calP_\fpsss(H) = \calP_\fpsss(B_H(\radius_0))$ and that, according to \eqref{lemPBHr0notempty}, the latter subspace is non-empty.
\end{proof}

\section{Concluding remarks and perspectives}
\label{seconclusions}

Here we present some final considerations and open problems related to our work.

\subsection{The role of stationary statistical solutions}

This work builds on two fundamental lines of research in the theory of differential equations. The first concerns the notion of statistical solution for the Navier-Stokes equations, introduced by Ciprian Foias and Giovanni Prodi and further developed by Mark Vishik, Andrei Fursikov, Roger Temam, and others. The second concerns the minimax formula for optimal bounds on evolutionary systems, advanced by Charlie Doering and collaborators.

The minimax formula has been proved for finite-dimensional ODEs and for the two-dimensional Navier-Stokes equations, and should apply to other well-posed systems, as well, but its extension to the three-dimensional Navier-Stokes equations seemed to be a major challenge, due to the lack of a well-defined semigroup and of a suitable notion of invariant measures.

For circumventing that, the stationary statistical solutions play a fundamental role, as proved here, reaching beyond their usual realm of applications and enabling the extension of the minimax formula to the three-dimensional Navier-Stokes equations, a problem that was central to Doering's program.

This work bridges these two lines of research and, in light of the above remarks, is a landmark for both.

\subsection{Maximizing asymptotic time averages}
\label{secmaxasymptimeavg}

In the works \cite{TobGolDoe2018} and \cite{RosaTemam2022}, the optimal formula is stated not only for ensemble averages but also for asymptotic time averages. More precisely, for the two-dimensional Navier-Stokes equations considered in \cite{RosaTemam2022}, with $\{S(t)\}_{t\geq 0}$ denoting the semigroup associated with the system and with a compact set $K$ which is attracting for the orbits starting from a given positively invariant set $B$, it is shown that
\begin{multline}
    \label{minimaxforasymptotictimeaverages}
    \max_{\bfu_0\in B} \lim_{T\rightarrow \infty} \frac{1}{T}\int_0^T \phi(S(t)\bfu_0)\;\rmd t \\ 
    = \max_{\mu\in\calP_\fpsss(B)} \int_H \phi \;\rmd\mu = \inf_{\Psi\in\calT^\cyl} \max_{\bfu\in K} \;\{\phi + \dual{\bfF, \bfD\Psi}_{V', V} \}.
\end{multline}
The first equality follows from the representation of asymptotic time averages as invariant measures, the fact that maximal invariant measures are ergodic, the Birkhoff-Khinchin Ergodic Theorem, and the fact that the Foias-Prodi stationary statistical solutions are invariant measures for the semigroup, in the two-dimensional case. In the three-dimensional case, however, the lack of a well-defined semigroup in the phase space and, hence, of invariant measures in the classical sense prevent the application of these results. As a consequence, the first equality above is not known to hold in the three-dimensional case. 

An alternative approach is to work with the time-translation semigroup in the space of trajectories, as in \cite{FRT2015,FRT2019}, for which the Vishik-Fursikov stationary statistical solutions are invariant measures. In this case, an analog of the first equality in \eqref{minimaxforasymptotictimeaverages} can be shown to hold. However, the analog of the second equality in trajectory space is not much useful since it involves global trajectories of the Navier-Stokes equations instead of points in phase space. The challenge, in this case, is to relate the minimax formula in the trajectory space to that in phase space.

One thing that should be clear is that, the first equality is based on the Lagrangian invariance of the measure, while the second equality is based on the Eulerian invariance of the measure. The Foias-Prodi stationary statistical solutions are based on the definition of Eulerian invariance, and this is one of the reasons it works well here. The two notions of invariance are not known to be equivalent for the three-dimensional Navier-Stokes equations.

In any case, the full relation \eqref{minimaxforasymptotictimeaverages} is still an open problem for the three-dimensional Navier-Stokes equations and for any other system lacking a well-defined semigroup.

It is worth mentioning that, while \eqref{minimaxformula} does not require $B$ to be positively invariant, the first relation in \eqref{minimaxforasymptotictimeaverages} does.

\subsection{Properties of maximizing measures}

The first equality in \eqref{minimaxforasymptotictimeaverages} is directly connected with the problem of ergodic optimization in dynamical system theory \cite{Jenkinson2019}. Of major interest in ergodic optimization is the characterization of maximizing orbits and maximizing measures. Similarly, in the context of the Navier-Stokes equations, be them two- or three-dimensional, it is also a fundamental and challenging problem to characterize the maximizing stationary statistical solutions, for relevant physical functionals $\phi,$ in general arbitrary domains or in specific geometries. See, for instance, the works \cite{CheGouHuaPa2014,FantuzziWynn2015, FGHC2016,TobGolDoe2018,Goluskin2018,GoluskinFantuzzi2019}, and, on a different line of investigation, \cite{LinThiffeaultDoering2011}, for related results.

\subsection{Maximum with one or two points}
\label{secmaxinoneortwopoints}

The minimax formula \eqref{minimaxformula} in \cref{maintheorem} involves convex combinations of at most two points. We can relate it to minimax formulas involving only single points in two different ways. First, we can simply bound the integrand by its supremum to find that
\begin{multline*}
    \int_H \{\phi + \dual{\bfF, \bfD\Psi}_{V', V}\} \;\rmd\mu \leq \sup_{\bfu\in B} \{\phi(\bfu) + \dual{\bfF(\bfu), \bfD\Psi(\bfu)}_{V', V}\} \int_H \;\rmd\mu \\ = \sup_{\bfu\in B} \{\phi(\bfu) + \dual{\bfF(\bfu), \bfD\Psi(\bfu)}_{V', V}\},
\end{multline*}
for any $\mu \in \calP_\fp(B_\rmw),$ and, hence, deduce that
\begin{multline}
    \label{minimaxupperbound}
    \max_{\mu\in\calP_\fpsss(B)} \int_H \phi \;\rmd\mu = \inf_{\Psi\in\calT^\cyl} \max_{\mu\in\calP_\fp(B_\rmw)} \int_H \{\phi + \dual{\bfF, \bfD\Psi}_{V', V}\} \;\rmd\mu \\
    \leq \inf_{\Psi\in\calT^\cyl} \sup_{\bfu\in B \cap V} \{\phi(\bfu) + \dual{\bfF(\bfu), \bfD\Psi(\bfu)}_{V', V}\}.
\end{multline}
The right hand side formula is the classical version obtained for well-posed systems in \cite{TobGolDoe2018} and \cite{RosaTemam2022}.

On the other hand, using that $\calB_f = \calB_{f, 1} \cup \calB_{f, 2}$ (see \cref{propextremePfp}), we find that
\begin{multline*} 
    \max_{(\bfu_1, \bfu_2, \theta_1, \theta_2)\in \calB_f} \;\sum_{i=1}^2\theta_i\{\phi (\bfu_i) + \dual{\bfF (\bfu_i), \bfD\Psi(\bfu_i)}_{V', V} \} \\
    \geq \max_{(\bfu_1, \bfu_2, \theta_1, \theta_2)\in \calB_{f,1}} \;\sum_{i=1}^2\theta_i\{\phi (\bfu_i) + \dual{\bfF(\bfu_i), \bfD\Psi(\bfu_i)}_{V', V} \} \\
    = \max_{\bfu \in B\cap V, \ell(\bfu) \leq 0} \{\phi(\bfu) + \dual{\bfF(\bfu), \bfD\Psi(\bfu)}_{V', V} \}.
\end{multline*}
This implies that
\begin{multline} 
    \label{minmaxlowerbound}
    \max_{\mu\in\calP_\fpsss(B)} \int_H \phi \;\rmd\mu = \inf_{\Psi\in\calT^\cyl} \max_{(\bfu_1, \bfu_2, \theta_1, \theta_2)\in \calB_f} \;\sum_{i=1}^2\theta_i\{\phi(\bfu_i) + \dual{\bfF(\bfu_i), \bfD\Psi(\bfu_i)}_{V', V} \} \\ 
    \geq \inf_{\Psi\in\calT^\cyl} \max_{\bfu \in B\cap V, \ell(\bfu) \leq 0} \{\phi(\bfu) + \dual{\bfF(\bfu), \bfD\Psi(\bfu)}_{V', V} \}.
\end{multline}
Notice we have squeezed our optimal minimax formula involving convex combinations of at most two points between two similar minimax formulas involving only single points. 

It is an open problem to understand the relation between these three formulas. Clearly, if the two lower and upper single-point bounds \eqref{minmaxlowerbound} and \eqref{minimaxupperbound} agree, then they agree with \eqref{minimaxformula}, and there is no need to optimize for convex combinations of two points. However, it is not known if, or when, this happens or if our optimal bound \eqref{minimaxformula} is strictly above \eqref{minmaxlowerbound} or strictly below \eqref{minimaxupperbound}.

\subsection{Minimax problem as a linear convex optimization problem}
\label{secminimaxaslinearoptimization}

As mentioned in the Introduction, the minimax problem in the right hand side of \eqref{optminimaxintroprevious} can be written as a linear convex optimization problem. More precisely, one can write
\begin{equation}
  \label{minimaxaslinearoptimizationprevious}
    \inf_{\Psi\in\calT^\cyl} \max_{\bfu\in K} \;\{\phi + \dual{\bfF, \bfD\Psi}_{V', V} \} = \inf_{(C,\Psi)\in\RR\times\calT^\cyl, S_{C,\Psi} \geq 0} C,
\end{equation}
where $S_{C,\Psi}(\bfu) = C - \phi(\bfu) - \dual{\bfF(\bfu), \bfD\Psi(\bfu)}_{V', V}$ is a functional on $V\cap B$. The condition $S_{C,\Psi} \geq 0$ means that $S_{C,\Psi}(\bfu) \geq 0,$ for all $\bfu\in K.$ This is an optimization of the linear map $(C,\Psi) \mapsto C$ over a convex set, since $\RR\times\calT^\cyl$ is convex and $S_{C,\Psi}$ is linear in $C$ and $\Psi.$

Similarly, in the three-dimensional case, the minimax problem in the right hand side of \eqref{optminimaxintro} can be written as
\begin{equation}
    \label{minimaxaslinearoptimization}
    \inf_{\Psi} \max_{\bfu_1, \bfu_2, \theta_1, \theta_2} \;\sum_{i=1}^2\theta_i\{\phi(\bfu_i) + \dual{\bfF(\bfu_i), \bfD\Psi(\bfu_i)}_{V', V} \} = \inf_{(C,\Psi)\in\RR\times\calT^\cyl, S_{C,\Psi} \geq 0} C,
\end{equation}
where now $S_{C,\Psi}=S_{C,\Psi}(\bfu_1, \bfu_2, \theta_1, \theta_2)$ is given by
\begin{equation}
    \label{SCPsiinminimaxaslinearoptimization}
    S_{C,\Psi}(\bfu_1, \bfu_2, \theta_1, \theta_2) = C - \sum_{i=1}^2\theta_i\{\phi(\bfu_i) + \dual{\bfF(\bfu_i), \bfD\Psi(\bfu_i)}_{V', V} \},
\end{equation}
and the constraint $S_{C,\Psi} \geq 0$ means that $S_{C,\Psi}(\bfu_1, \bfu_2, \theta_1, \theta_2) \geq 0,$ should hold for all $(\bfu_1, \bfu_2, \theta_1, \theta_2)\in\calB_f.$ This is still a linear optimization problem. 

\subsection{Sum of squares}

The optimization problems \eqref{minimaxaslinearoptimizationprevious} and \eqref{minimaxaslinearoptimization} are not yet suitable for practical computation. The condition $S_{C,\Psi} \geq 0$ is not easy to check in this generality. In fact, it is an undecidable problem \cite{Richardson1969}. If the system $\bfu_t = \bfF(\bfu)$ is finite-dimensional and the function $\bfF=\bfF(\bfu)$ is a polynomial in $\bfu$, then condition $S_{C,\Psi} \geq 0$ is decidable but NP-hard \cite{MurtyKabadi1987}. For practical purposes, one checks instead whether $S_{C,\Psi}$ is a sum of squares (SoS) of polynomials, which we write as $S_{C,\Psi} \in \texttt{SoS}$. More precisely, one restricts the test functionals to polynomials $\calP_m$ of a certain degree $m$ and considers the following approximate optimization problem
\begin{equation}
  \label{minimaxaslinearoptimizationpreviousSoS}
    \inf_{\Psi\in\calT^\cyl} \max_{\bfu\in K} \;\{\phi + \dual{\bfF, \bfD\Psi}_{V', V} \} \leq \inf_{(C,\Psi)\in\RR\times\calP_m, S_{C,\Psi} \in \texttt{SoS}} C
\end{equation}
as a sum of squares relaxation of the original problem \eqref{minimaxaslinearoptimizationprevious} (and similarly for \eqref{minimaxaslinearoptimization}). This problem can be further rewritten as a semidefinite programming problem, which can be solved efficiently in polynomial time. See \cite{Parrilo2003,Lasserre2015} for more details on this topic. 

When the problem is infinite-dimensional and/or the right hand side $\bfF=\bfF(\bfu)$ of the equation is not a polynomial, one must first approximate the problem to be so. For example, for the three-dimensional Navier-Stokes equations, $\bfF=\bfF(\bfu)$ is a quadratic polynomial but the phase space is infinite-dimensional. On a bounded spatial domain, for instance, one may first approximate the equations by a finite-dimensional system, such as a Galerkin approximation $\bfF_k(\bfu) = P_k \bfF(P_k\bfu),$ where $P_k$ is the Galerkin projector associated with the first $k$ eigenmodes of the Stokes operator, and then check the SoS condition. This leads to the problem
\begin{multline}
    \label{minimaxaslinearoptimizationSoS}
    \inf_{\Psi} \max_{\bfu_1, \bfu_2, \theta_1, \theta_2} \;\sum_{i=1}^2\theta_i\{\phi(\bfu_i) + \dual{\bfF_k(\bfu_i), \bfD\Psi(\bfu_i)}_{V', V} \} \\
    \leq \inf_{(C,\Psi)\in\RR\times\calP_m, S_{k,C,\Psi} \in \texttt{SoS}} C,
\end{multline}
where now $S_{k,C,\Psi}=S_{k,C,\Psi}(\bfu_1, \bfu_2, \theta_1, \theta_2)$ is given by
\begin{equation}
    \label{SCPsiinminimaxaslinearoptimizationSoS}
    S_{k,C,\Psi}(\bfu_1, \bfu_2, \theta_1, \theta_2) = C - \sum_{i=1}^2\theta_i\{\phi(\bfu_i) + \dual{\bfF_k(\bfu_i), \bfD\Psi(\bfu_i)}_{V', V} \},
\end{equation}
and the test functionals are restricted to polynomials of degree $m$ in $P_k H.$

When fixing the degree $m$ of the polynomial test functions, on a $k$ dimensional space, the optimization problem becomes a semidefinite program where the decision variable is a symmetric matrix of size $N \times N,$ where $N = \binom{k + m/2}{m/2}$, with the computational complexity of the order of $O(N^3)$ to $O(N^{9/5})$, depending on the algorithm. This is maneageble if the dimension is not too large. Improving this is an active area of research.

While the linear optimization problem \eqref{minimaxaslinearoptimizationpreviousSoS} has been implemented and investigated before in practical problems (see e.g. \cite{CheGouHuaPa2014,FantuzziWynn2015, FGHC2016,TobGolDoe2018,Goluskin2018,GoluskinFantuzzi2019}), the linear optimization problem \eqref{minimaxaslinearoptimizationSoS} is yet to be implemented. As mentioned in \cref{secmaxinoneortwopoints}, the bound in \eqref{optminimaxintroprevious} is an upper bound on the optimal bound in \eqref{optminimaxintro}, and it is not clear whether the bound obtained with \eqref{SCPsiinminimaxaslinearoptimizationSoS} would be a practical improvement. Nevertheless, the optimal formula in \eqref{optminimaxintro} is a fundamental theoretical result that allows us to interpret these approximate formulas as sharp upper bounds.

\subsection{SoS relaxation gap} 

Relaxing the constraint $S_{C,\Psi} \geq 0$ to $S_{C,\Psi} \in \texttt{SoS}$ potentially leads to a gap in the optimal value of the optimization problem, which can be termed as the SoS relaxation gap. This gap is a fascinating problem dating back to Hilbert and connected with Hilbert's 17th problem \cite{Hilbert1900}. Hilbert \cite{Hilbert1888} proved that not all polynomials in several variables can be written as sums of squares of polynomials, except in some special cases (polynomials of a single variable, quadratic forms, and polynomials of degree four in two variables). This lead to Hilbert's 17th problem of whether all positive polynomials can be written instead as sums of squares of rational functions, which was proved to be true by Artin \cite{Artin1927}. Checking whether polynomials are sums of squares of rational functions can still be solved in polynomial time once the dimension of the denominator is also fixed, but the computational cost escalates much faster, and usually only the case of SoS of polynomials or SoS of rational functions with low-dimensional denominators are considered, for practical reasons. In any case, the problem is expected to have a theoretical SoS relaxation gap, which is still not well understood in the context of optimal bounds for dynamical systems.

\subsection{Optimal upper bound formula for enstrophy and energy dissipation}
\label{secenstrophyupperbound}

For \cref{maintheorem}, it is required that $\phi: V\cap B\rightarrow \RR$ be weakly-continuous in $V$ and with sub-quadratic growth on $V$ (as in \cref{lemgvwGcontinuous}). Thus we cannot apply the result to 
\[ \phi(\bfu) = \|\bfnabla \bfu\|^2,
\]
which prevents us from obtaining optimal upper bound formulas for the mean enstrophy and the mean kinetic energy dissipation rate, defined by
\[ \frac{\rho}{2}\int_H \|\bfnabla\times\bfu\|_{L^2}^2 \;\rmd\mu(\bfu) \quad \textrm{and} \quad \rho\nu\int_H \|\bfnabla\bfu\|_{L^2}^2 \;\rmd\mu(\bfu),
\]
respectively. In the divergence-free case with no-slip boundary conditions, it follows that $\|\bfnabla\times\bfu\|_{L^2}^2 = \|\bfnabla\bfu\|_{L^2}^2,$ so both expressions are multiples of $\|\bfnabla\bfu\|_{L^2}^2$, which is neither weakly continuous, nor with sub-quadratic growth, in $V$. As mentioned in the Introduction, it is possible to extend \cref{maintheorem} to hold for weakly upper-semicontinuous functions on $V$, allowing us to apply the result to $-\|\bfnabla \bfu\|^2$ and obtain an optimal lower bound formula for the desired physical quantities. But the corresponding upper bound formula does not follow from our main result.

Nevertheless, a formula can still be obtained by approximating the function via Galerkin approximations, namely
\begin{multline}
    \label{minimaxformulawithPkenstrophy}
    \sup_{\mu\in\calP_\fpsss(B)} \int_H \|\bfnabla \bfu\|_{L^2}^2 \;\rmd\mu(\bfu) \\ = \lim_{k\rightarrow \infty} \inf_{\Psi\in\calT^\cyl} \max_{(\bfu_1, \bfu_2, \theta_1, \theta_2)\in \calB_f} \;\sum_{i=1}^2\theta_i\{\|\bfnabla P_k\bfu_i\|_{L^2}^2 + \dual{\bfF(\bfu_i), \bfD\Psi(\bfu_i)}_{V', V} \}.
\end{multline}
Notice that the supremum appears on the left hand side above, in contrast to the formula in \cref{maintheorem}, because the function $\mu \mapsto \int_H \|\bfnabla \bfu\|_{L^2}^2 \;\rmd\mu(\bfu)$ is not continuous on $\calP_\fpsss(B).$

In order to establish \eqref{minimaxformulawithPkenstrophy}, we consider the functional
\[
    \phi_k(\bfu) = \|\bfnabla P_k\bfu\|_{L^2}^2,
\]
which is defined on all $H$. Since the Galerkin projectors $P_k$ are orthogonal not only in $H$ but also in $V$, they satisfy $\phi_k(\bfu) \leq \phi(\bfu),$ for each $\bfu.$ This means that
\[ \int_H \phi_k(\bfu) \;\rmd\mu(\bfu) \leq \int_H \phi(\bfu) \;\rmd\mu(\bfu),
\]
so that,
\[ \max_{\mu\in\calP_\fpsss(B)} \int_H \phi_k(\bfu) \;\rmd\mu(\bfu) \leq \sup_{\mu\in\calP_\fpsss(B)} \int_H \phi(\bfu) \;\rmd\mu(\bfu),
\]
for each $k\in\NN.$  Taking the limit in $k$, we find that 
\begin{equation} 
    \label{limsupenstro}
    \limsup_{k\rightarrow\infty}\max_{\mu\in\calP_\fpsss(B)} \int_H \phi_k(\bfu) \;\rmd\mu(\bfu) \leq \sup_{\mu\in\calP_\fpsss(B)} \int_H \phi(\bfu) \;\rmd\mu(\bfu).
\end{equation}
On the other hand, for an arbitrary $\mu_0$ in $\calP_\fpsss(B),$ since $\phi_k(\bfu)$ converges monotonically to $\phi(\bfu)$ from below, pointwise in $\bfu\in H,$ we obtain, from the Monotone Convergence Theorem, that
\[
    \int_H \phi(\bfu) \;\rmd\mu_0(\bfu) = \lim_{k\rightarrow\infty} \int_H \phi_k(\bfu) \;\rmd\mu_0(\bfu).
\]
Since 
\[  \int_H \phi_k(\bfu) \;\rmd\mu_0(\bfu) \leq \max_{\mu\in\calP_\fpsss(B)} \int_H \phi_k(\bfu) \;\rmd\mu(\bfu),
\] 
for each $k\in\NN,$ we obtain
\[
    \int_H \phi(\bfu) \;\rmd\mu_0(\bfu) \leq \liminf_{k\rightarrow\infty} \max_{\mu\in\calP_\fpsss(B)} \int_H \phi_k(\bfu) \;\rmd\mu(\bfu).
\]
Taking the supremum in $\mu_0\in \calP_\fpsss(B),$ we find that 
\begin{equation}
    \label{liminfenstro}
    \sup_{\mu_0\in\calP_\fpsss(B)}\int_H \phi(\bfu) \;\rmd\mu_0(\bfu) \leq \liminf_{k\rightarrow\infty} \max_{\mu\in\calP_\fpsss(B)} \int_H \phi_k(\bfu) \;\rmd\mu(\bfu).
\end{equation}

Combining inequalities \eqref{limsupenstro} and \eqref{liminfenstro} yields
\[
    \sup_{\mu\in\calP_\fpsss(B)} \int_H \phi(\bfu) \;\rmd\mu(\bfu) = \lim_{k\rightarrow \infty} \max_{\mu\in\calP_\fpsss(B)} \int_H \phi_k(\bfu) \;\rmd\mu(\bfu).
\]
Then, applying \cref{maintheorem} to each $\phi_k$, we obtain \eqref{minimaxformulawithPkenstrophy}.

We should mention that, in practice, one usually uses anyway the Galerkin approximation to reduce the problem to a finite-dimensional optimization problem, so formula \eqref{minimaxformulawithPkenstrophy} actually justifies this approach.

For the result in \eqref{minimaxformulawithPkenstrophy}, we only use that $\phi_k(\bfu) = \phi(P_k\bfu)$ increases monotonically towards $\phi(\bfu)$ from below. This same property holds for the functional $\phi(\bfu) = \|(I-P_m)\bfnabla\bfu\|_{L^2}^2$, for a fixed $m\in\NN,$ so the same formula applies in this case. This should be useful when estimating the enstrophy or the energy dissipation rate associated with the smallest length scales of the flow.

The main problem in switching the limit in $k$ with the inf-sup problem in the right hand side of \eqref{minimaxformulawithPkenstrophy} is the lack of continuity of the functional $\bfu \mapsto \|\bfnabla \bfu\|_{L^2}^2$ over the compact set $\calB_f$ (or the lack of compactness of $\calB_f,$ if we were to endow it with the topology inherithed from $V$).\footnote{A simple example where switching the limit in $k$ with the inf-sup leads to a different result is one in which $\lim_k \inf_{\Psi\in \calT}\sup_{u\in B} (\phi_k(u) + L(\Psi, u)) = -1 < 0 = \inf_{\Psi\in \calT}\sup_{u\in B} (\phi(u) + L(\Psi, u))$, where $B = [0, 1]$ (compact), $\phi(u) = 0$ for $0 < u \leq 1$ and $\phi(0) = -1$ (lower semi-continuous), $\calT = \{\Psi(u) = au; \;a\in \RR\}$ (normed vector space), $\phi_k(u) = \min\{ku - 1, 0\}$ (continuous and increasing monotonically towards $\phi(u)$), $F(u) = -u$ (continuous), and $L(\Psi, u) = \Psi'(u)F(u) = -au$ (linear and continuous). This gap is independent of the topology we use in $B$, so if we change it to the discrete topology, we still have this gap, now with $\phi(u)$ continuous but with $B$ non-compact.} If $\mu$ is carried on a more regular set, then a similar result goes through, as discussed in \cref{secvfsss}. 

\subsection{Vishik-Fursikov stationary statistical solutions}
\label{secvfsss}

We have considered here the problem of maximizing ensemble averages over the set of arbitrary Foias-Prodi stationary statistical solutions. This type of solution has finite mean enstrophy, which allows us to apply our results to certain types of functions which are weakly-continuous in $V$ and have sub-quadratic growth on $V$ (as in \cref{lemgvwGcontinuous}), such as the kinetic energy and the energy-transfer terms. For the mean enstrophy itself, one needs more regularity. This can be achieved by considering domains $\Omega$ with smooth boundaries and with forcing terms $\bff$ in $H$, and then by restricting the maximum in the left hand side of the minimax formula \eqref{minimaxformula} to more regular types of stationary statistical solutions, such as Vishik-Fursikov stationary statistical solutions. These satisfy the bound (see \cite[Theorem 5.4]{FRT2019})
\[
    \int_H \|A\bfu\|_{L^2}^{2/3} \;\rmd\mu(\bfu) \leq \lambda_1 \nu^2 G^2,
\]
where $\lambda_1$ is the first eigenvalue of the Stokes operator $A$ and $G = \|\bff\|_{L^2} / \nu^2 \lambda_1^{3/4}$ is the Grashof number. This type of solution includes the time-average stationary statistical solutions discussed in \cref{secsss}.

This bound, however, does not follow directly from the Liouville-type transport equation \eqref{eqliouvilleintro} and must be imposed in the space as an additional constraint, which leads, in turn, to a different minimax formula, involving a convex combination of at most three points. This will be addressed elsewhere.

\subsection{Other systems}

The optimal minimax formula in \cref{maintheorem} is stated for the three-dimensional Navier-Stokes equations, but it should be possible to adapt it to a much more general class of systems. Typically, one expects the formula \eqref{optminimaxintroprevious} to hold for systems which do not require any extra energy-type constraint, while the formula \eqref{optminimaxintro} is expected to hold for systems which require an energy-type constraint. Systems with two or more constraints would be expected to have a minimax formula involving convex combinations of three or more points. In this regard, it would be interesting to investigate similar problems, especially those with no global well-posedness results and with extra energy-type or entropy-type constraints, such as 3D liquid crystal flow equations (Ericksen-Leslie system), 3D nonlinear Schr\"odinger equation, 3D aggregation-diffusion equations (Keller-Segel), 1D aggregation-diffusion equations with degenerate kernels, systems of hyperbolic conservation laws, 1D Camassa-Holm equation, 1D Hunter-Saxton Equation, and others. The one-dimensional equations are specially suitable for numerical investigations since they are more tractable computationally.

\section*{Acknowledgments}

This work is dedicated to the memory of Ciprian Foias (1933--2020) and Charlie Doering (1953--2021), and is also a tribute to Roger Temam on the occasion of his 85th birthday (May 2025). We would like not only to acknowledge their fundamental contributions on the two major lines of research connected by this work, but also to thank them for their inspiration, mentorship, and friendship over the years.


Authors AB and RMSR acknowledge support from Coordenação de Aperfeiçoamento de Pessoal de Nível Superior (CAPES), Brasil, grant no. 001, and Conselho Nacional de Desenvolvimento Cient\'{\i}fico e Tecnol\'ogico (CNPq), Bras\'{\i}lia, Brasil, grant no. 408751/2023-1. Author CFM received support from the National Science Foundation under the grant DMS-2239325.

\section*{Data Availability}

No datasets were generated or analyzed for this work, and therefore data sharing is not applicable.

\end{document}